\newtheorem{theorem}{Theorem}[section]
\newtheorem{lemma}[theorem]{Lemma}
\numberwithin{equation}{section}
\begin{document}

\title[elastic wave scattering by biperiodic structures]{An
adaptive finite element DtN method for the elastic wave scattering by 
biperiodic structures}

\author{Gang Bao}
\address{School of Mathematical Sciences, Zhejiang University, Hangzhou 310027,
China.}
\email{baog@zju.edu.cn}

\author{Xue Jiang}
\address{Faculty of Science, Beijing University of Technology, Beijing,
100124, China.}
\email{jxue@lsec.cc.ac.cn}

\author{Peijun Li}
\address{Department of Mathematics, Purdue University, West Lafayette, Indiana
47907, USA.}
\email{lipeijun@math.purdue.edu}

\author{Xiaokai Yuan}
\address{School of Mathematical Sciences, Zhejiang University, Hangzhou 310027,
China.}
\email{yuan170@zju.edu.cn}

\thanks{The work of GB is supported in part by an NSFC Innovative Group Fund
(No.11621101). The work of XJ is supported in part by China NSF grants 11771057
and 11671052. The research of PL is supported partially by the NSF grant
DMS-1912704.}

\subjclass[2010]{78A45, 65N30, 65N12, 65N50}

\keywords{elastic wave equation, scattering by biperiodic structures, adaptive
finite element method, transparent boundary condition, DtN map, a posteriori
error estimate}

\begin{abstract}
Consider the scattering of a time-harmonic elastic plane wave by a bi-periodic
rigid surface. The displacement of elastic wave motion is modeled by the
three-dimensional Navier equation in an open domain above the surface. Based on
the Dirichlet-to-Neumann (DtN) operator, which is given as an infinite series,
an exact transparent boundary condition is introduced and the scattering problem
is formulated equivalently into a boundary value problem in a bounded domain. An
a posteriori error estimate based adaptive finite element DtN method is proposed
to solve the discrete variational problem where the DtN operator is truncated
into a finite number of terms. The a posteriori error estimate takes account of
the finite element approximation error and the truncation error of the DtN
operator which is shown to decay exponentially with respect to the truncation
parameter. Numerical experiments are presented to illustrate the effectiveness
of the proposed method.    
\end{abstract}

\maketitle

\section{Introduction}\label{section:introduction}

This paper concerns the scattering of a time-harmonic elastic plane wave by
a bi-periodic surface in three dimensions. Due to the wide and significant
applications in seismology and geophysics, the elastic wave scattering problems
have received ever increasing attention in both mathematical and engineering
communities \cite{arens99, arens99integral, lwz-ip15}. Compared
with the acoustic and electromagnetic wave scattering problems, the elastic wave
scattering problems are less studied due to the fact that the elastic wave
consists of coupled compressional and shear wave components with different
wavenumbers, which makes the analysis of the problems more complicated. In
addition, there are two challenges for the elastic surface scattering problem:
the solution may have singularity due to a possible non-smooth surface; the
problem is imposed in an open domain. In this paper, we intend to address
both of these two issues by proposing an a posteriori error estimate based
adaptive finite element method with the transparent boundary condition. 

The a posteriori error estimates are computable quantities from numerical
solutions. They can be used to measure the solution errors of discrete problems
without requiring any a priori information of exact solutions
\cite{BR-SJNA-1978, M-JCAM-1998}. Since the a posteriori error estimate based
adaptive finite element method has the ability to control the error and to
asymptotically optimize the approximation, it is crucial for mesh modification
such as refinement and coarsening \cite{D-SJNA-1996, V-1996}. The method has
become an important numerical tool for solving boundary value problems of
partial differential equations, especially for those where the solutions have
singularity or multiscale phenomena.

The key of overcoming the second issue is to reformulate the open domain
problem into a boundary value problem in a bounded domain without generating
artificial wave reflection. One possible approach is to make use of the
perfectly matched layer (PML) techniques. The basic idea of the PML is to
surround the domain of interest by a layer of finite thickness of fictitious
medium that may attenuate the waves propagating from inside of the computational
domain. When the waves reach the outer boundary of the PML region, their
amplitudes are so small that the homogeneous Dirichlet boundary
condition can be imposed. Due to the effectiveness and simplicity, since
B\'erenger  proposed the technique to solve the time domain Maxwell equations
\cite{b-jcp94}, it has undergone a tremendous development of designing various
PML methods to solving a wide range of open domain scattering problems
\cite{BW-SJNA-2005, BP-MC-07, bpt-mc10, CW-MOTL-1994, CM-SISC-1998, ct-g01,
HSB-jasa96, HSZ-sima03, LS-C-1998}. Combined with adaptive finite element
methods, the PML method has been investigated to solve the two- and
three-dimensional obstacle scattering problems \cite{CC-mc08, CL-sinum05,
cxz-mc16} and the two- and three-dimensional diffraction grating problems
\cite{BPW-MC-2010, CW-SINUM-2003, jllz-m2na17}. The a posteriori error estimates
based adaptive finite element PML methods take account of the finite element
discretization errors and the PML truncation errors which decay exponentially
with respect to the PML parameters. 

Alternatively, another effective approach to truncate the open domain is to
construct the Dirichlet-to-Neumann (DtN) map and introduce the transparent
boundary condition to enclose the domain of interest. Since the DtN
operator is exact, the transparent boundary condition can be imposed on the
boundary which is chosen as close as possible to the scattering structure.
Compared to the PML method, the DtN method can reduce the size of the
computational domain. As a viable alternative to the PML method, the adaptive
finite element DtN methods have also been developed recently to solve many two-
and three-dimensional scattering problems, such as the acoustic scattering
problems \cite{JLLZ-JSC-2017, JLZ-CCP-2013, WBLLW-2015-SJNA}, the
three-dimensional electromagnetic scattering problem \cite{JLLWWZ}, and
the two-dimensional elastic wave scattering problems
\cite{LY-2019-periodic, LY-2019-obstacle}.

This paper concerns the numerical solution of the elastic wave
scattering by biperiodic structures in three dimensions. It is a non-trivial
extension of the elastic wave scattering by periodic structures in two
dimensions \cite{LY-2019-periodic}. There are two challenges for the
three-dimensional problem. First, the Helmholtz decomposition of the elastic
wave equation gives two two-dimensional Helmholtz equations in the
two-dimensional case; however, for the Helmholtz decomposition in the
three-dimensional case, we have to consider a three-dimensional Helmholtz
equation and a three-dimensional Maxwell equation, which makes the analysis much
more complicated. Second, from the computational point of view, it is much more 
time-consuming to solve the three-dimensional problem than to solve the
two-dimensional problem. 

Specifically, we consider the scattering of a time-harmonic plane elastic wave
by a biperiodic rigid surface. The elastic wave propagation is modeled by the
three-dimensional Navier equation in the open domain above the scattering
surface. By the Helmholtz decomposition, a DtN operator is constructed in terms
of Fourier series expansions for the compressional and shear wave components,
then an exact transparent boundary condition is introduced to reduce the open
domain problem into an equivalent boundary value problem in a bounded domain.
The nonlocal DtN operator needs to be truncated into a sum of finitely many
terms in actual computation. However, it is known that the convergence of the
truncated DtN operator could be arbitrary slow to the original DtN operator in
the operator norm \cite{HNPX-JCAM-2011}. By carefully examining the properties
of the exact solution, we observe that the truncated DtN operator converges
exponentially to the original DtN operator when acting on the solution of the
elastic wave equation, which enables the analysis of exponential convergence
for this work. Combined with the truncated DtN operator and finite element
method, the discrete problem is studied. We develop a new duality argument to
deduce the a posteriori error estimate. The a posteriori error estimate takes
account of the finite element approximation error and the DtN operator
truncation error which is shown to decay exponentially with respect to the
truncation parameter $N$. Moreover, an a posteriori error estimate based
adaptive finite element algorithm
is presented to solve the discrete problem, where the estimate is used to design
the algorithm to choose elements for refinements and to determine the truncation
parameter $N$. Due to the exponential convergence of the truncated DtN operator,
the choice of the truncation parameter $N$ turns out not to be sensitive to the
given tolerance of accuracy. Numerical examples are presented to demonstrate the
effectiveness of the proposed method.

The paper is organized as follows. In Section \ref{section: problem}, the model
equation is introduced for the scattering problem.  Section \ref{section: BVP}
concerns the variational problem. By the Helmholtz decomposition, the DtN
operator is constructed and the transparent boundary condition is introduced to
reformulate the scattering problem into a boundary value problem in a bounded
domain, and the corresponding weak formulation is presented. In Section
\ref{section: DP}, the discrete problem is studied by using the finite element
method with the truncated DtN operator. Section \ref{section: PEA} is devoted to
the a posteriori error analysis for the discrete problem and the exponential
convergence is proved for the truncated DtN operator. In Section
\ref{section: NE}, an adaptive finite element algorithm is described and
numerical experiments are carried out to illustrate the competitive behavior of
the proposed method. The paper is concluded with some general remarks and
directions for future work in Section \ref{section: conclusion}.

\section{Problem formulation}\label{section: problem}

Consider the scattering of a time-harmonic plane elastic wave by a rigid
biperiodic surface. Due to the biperiodic structure, the scattering problem can
be restricted into a single biperiodic cell, as shown in Figure \ref{pg}. Let
\[
 S=\{\boldsymbol{x}=(x_1, x_2, x_3)^\top\in\mathbb{R}^3:
(x_1, x_2)\in(0, \Lambda_1)\times(0, \Lambda_2), x_3=f(x_1, x_2)\}
\]
be the scattering surface, where $f$ is a Lipschitz continuous biperiodic
function with periods $\Lambda_1$ and $\Lambda_2$ in the $x_1$ and $x_2$
directions, respectively. Denote the open space above $S$ by 
\[
 \Omega_f=\{\boldsymbol{x}\in\mathbb{R}^3:
(x_1, x_2)\in(0, \Lambda_1)\times(0, \Lambda_2), x_3>f(x_1, x_2)\},
\]
which is assumed to be filled with an isotropic and homogeneous elastic
medium. The medium may be characterized by the Lam\'{e} parameters $\lambda,
\mu$ and the mass density $\rho$ which is assumed to be unit for simplicity.  
Furthermore, we assume that the Lam\'{e} constants satisfy $\mu>0,
\lambda+\mu>0$. Define 
\[
\Gamma_h=\{\boldsymbol{x}\in\mathbb{R}^3: (x_1, x_2)\in(0, \Lambda_1)\times(0,
\Lambda_2), x_3= h\},
\]
where $h$ is a constant satisfying $h>\max_{(x_1, x_2)\in(0, \Lambda_1)\times(0,
\Lambda_2)}f(x_1, x_2)$. Denote by $\Omega$ the bounded domain enclosed by $S$
and $\Gamma_h$, i.e., 
\[
\Omega=\{\boldsymbol{x}\in\mathbb{R}^3: (x_1, x_2)\in(0, \Lambda_1)\times
(0, \Lambda_2), f(x_1, x_2)< x_3 < h\}. 
\]

\begin{figure}
\centering
\includegraphics[width=0.5\textwidth]{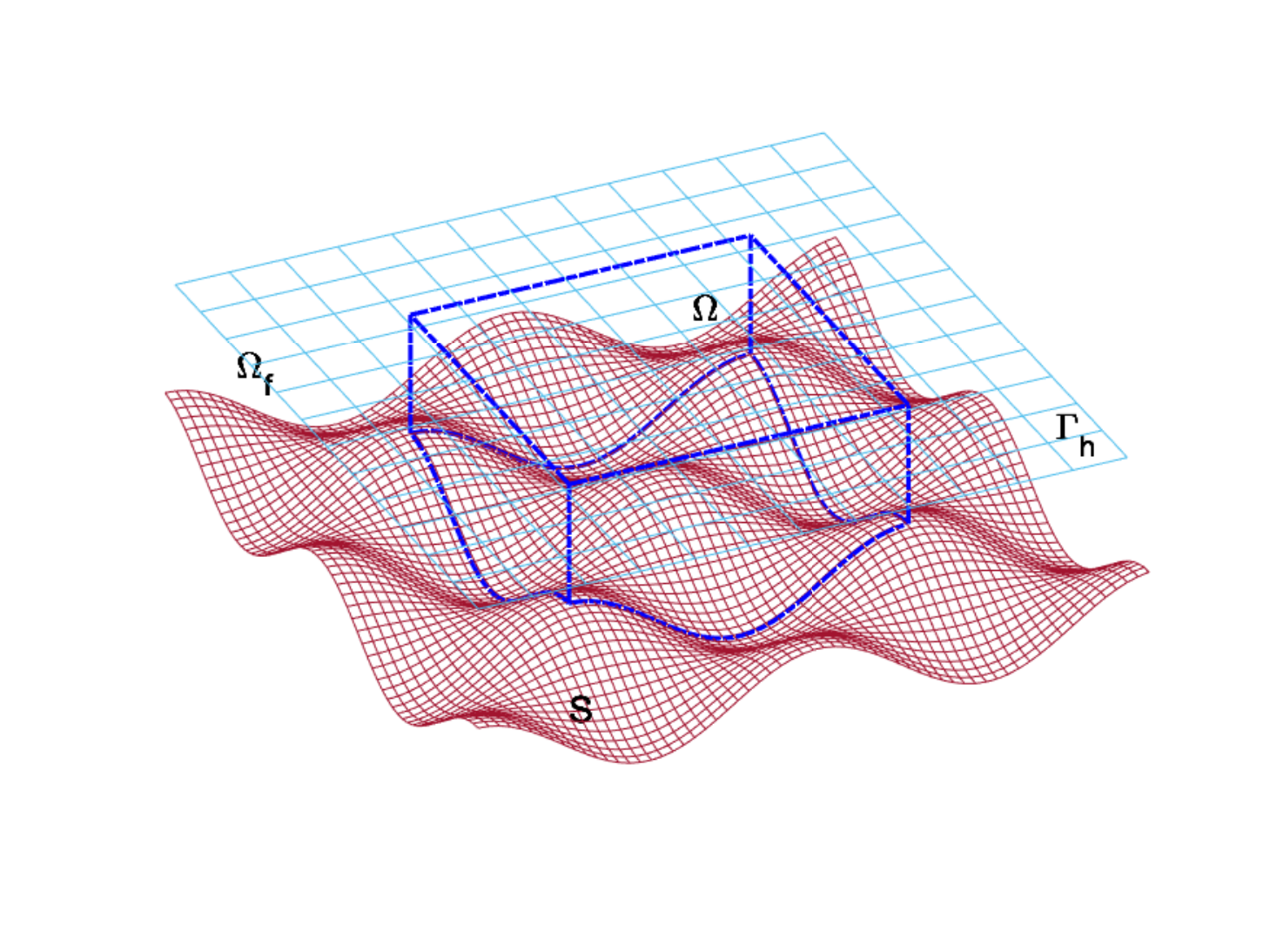}
\caption{Problem geometry of the elastic scattering by a biperiodic surface.}
\label{pg}
\end{figure}

Let a compressional plane wave
\[
\boldsymbol{u}^{\rm inc}(\boldsymbol x)=\boldsymbol{q} e^{{\rm i}\kappa_1
\boldsymbol{q}\cdot\boldsymbol{x}}
\]
be sent from the above to impinge the surface,  where
$\boldsymbol{q}=(\sin\theta_1 \cos\theta_2, \sin\theta_1\sin\theta_2,
-\cos\theta_1)^\top$, $\theta_1\in[0, \pi/2)$ and $\theta_2\in[0, 2\pi]$ 
are the incident angles, and $\kappa_1=\omega/\sqrt{\lambda+2\mu}$ is the
compressional wavenumber with $\omega$ being the angular frequency. We mention
that the results are the same for the
incidence of a shear plane wave $\boldsymbol{u}^{\rm
inc}(\boldsymbol{x})=\boldsymbol{p} e^{{\rm i}\kappa_2 \boldsymbol
q\cdot\boldsymbol{x}}$, where $\boldsymbol p$ is a unit vector satisfying
$\boldsymbol p\cdot\boldsymbol q=0$ and $\kappa_2=\omega/\sqrt{\mu}$ is the
shear wavenumber, or a linear combination of the shear and compressional plane
waves. 

Denote the displacement of the scattered wave by $\boldsymbol{u}$, which
satisfies the elastic wave equation
\begin{equation}\label{ewe}
\mu\Delta\boldsymbol{u}+(\lambda+\mu)\nabla\nabla\cdot\boldsymbol{u}
+\omega^2\boldsymbol{u}=0 \quad {\rm in} ~ \Omega_f. 
\end{equation}
Since the surface is assumed to be elastically rigid, we have 
\begin{equation}\label{bc}
\boldsymbol{u}=-\boldsymbol{u}^{\rm inc} \quad {\rm on} ~ S.
\end{equation}
In addition, the scattered wave $\boldsymbol u$ is assumed to satisfy the
bounded outgoing wave condition as $x_3\to\infty$. Motivated by uniqueness, we
seek the so-called quasi-periodic solutions of \eqref{ewe}--\eqref{bc}, i.e.,
$\boldsymbol{u}(\boldsymbol x)e^{-{\rm
i}\boldsymbol{\alpha}\cdot\boldsymbol{r}}$ is a biperiodic function of
$\boldsymbol r=(x_1, x_2)^\top$ with periods $\Lambda_1$ and $\Lambda_2$ in the
$x_1$ and $x_2$ directions, respectively, where $\boldsymbol\alpha=(\alpha_1,
\alpha_2)^\top, \alpha_1=\kappa_1\sin\theta_1\cos\theta_2,
\alpha_2=\kappa_1\sin\theta_1 \sin\theta_2$. 

Define a quasi-periodic function space
\begin{eqnarray*}
 H_{\rm qp}^1(\Omega)=\{u\in H^1(\Omega) & : & e^{{\rm i}\alpha_1\Lambda_1}u(0,
x_2, x_3)=u(\Lambda_1, x_2, x_3),\\
&& e^{{\rm i}\alpha_2\Lambda_2}u(x_1, 0, x_3)=u(x_1, \Lambda_2, x_3)\}
\end{eqnarray*}
and its subspace 
\[
 H_{S, {\rm qp}}^1(\Omega)=\{u\in H_{\rm qp}^1(\Omega): u=0\text{ on } S\}. 
\]
Let 
\begin{eqnarray*}
L_{\rm qp}^2(\Gamma_h)=\{u\in L^2(\Gamma_h) &:&e^{{\rm i}\alpha_1\Lambda_1}u(0,
x_2, h)=u(\Lambda_1, x_2, h),\\
&&e^{{\rm i}\alpha_2\Lambda_2}u(x_1, 0, h)=u(x_1, \Lambda_2, h)\}. 
\end{eqnarray*}
For any $u\in L^2_{\rm qp}(\Gamma_h)$,  it has the Fourier series expansion 
\[
u(\boldsymbol{r}, h)=\sum\limits_{n\in\mathbb{Z}^2}u_n(h)e^{{\rm
i}\boldsymbol{\alpha}_n\cdot\boldsymbol{r}},\quad
u_n(h)=\frac{1}{\Lambda_1
\Lambda_2}\int_{0}^{\Lambda_1}\int_{0}^{\Lambda_2} u(\boldsymbol{r}, h)e^{-{\rm
i}\boldsymbol{\alpha}_n\cdot\boldsymbol{r}}{\rm d}\boldsymbol{r},
\]
where $n=(n_1, n_2)^\top\in\mathbb Z^2$,
$\boldsymbol{\alpha}_n=(\alpha_{1 n}, \alpha_{2 n})^\top,
\alpha_{j n}=\alpha_j+2\pi n_j/\Lambda_j, j=1, 2$. 

Define a trace function space $H^s(\Gamma_{h}), s\in\mathbb R^+$ by
\[
H^{s}(\Gamma_{h})=\left\{u\in L^2(\Gamma_{h}):
\|u\|_{H^{s}(\Gamma_{h})}<\infty\right\},
\]
where the norm is given by
\[
\|u\|^2_{H^{s}(\Gamma_{h})}=\Lambda_1\Lambda_2
\sum\limits_{n\in\mathbb{Z}^2}\left(1+|\boldsymbol{\alpha}_n|^2\right)^s 
\left|u_n(h)\right|^2.
\]
It is clear that the dual space of $H^{s}(\Gamma_h)$ is $H^{-s}(\Gamma_h)$ with
respect to the scalar product in $L^2(\Gamma_h)$ given by
\[
\langle u, v\rangle_{\Gamma_h}=\int_{\Gamma_h} u\bar{v} {\rm d}s.
\]

Let $\boldsymbol{H}_{\rm qp}^{1}(\Omega), \boldsymbol{H}_{S,
{\rm qp}}^{1}(\Omega) $ and $\boldsymbol{H}^{s}(\Gamma_{h})$ be the 
Cartesian product spaces equipped with the corresponding 2-norms of
$H_{\rm qp}^{1}(\Omega), H_{S, {\rm qp}}^{1}(\Omega)$ and 
$H^{s}(\Gamma_{h})$, respectively. Throughout the paper, the notation $a\lesssim
b$ stands for $a\leq Cb$, where $C$ is a positive constant whose value is not
required but should be clear from the context. 

\section{The boundary value problem}\label{section: BVP}

In this section, we introduce the DtN operator to reduce the problem
\eqref{ewe}--\eqref{bc} into a boundary value problem in the bounded domain
$\Omega$ and present the well-posedness of its variational formulation. 

Consider the Helmholtz decomposition 
\begin{equation}\label{Helmholtz}
\boldsymbol{u}=\nabla\phi+\nabla\times\boldsymbol{\psi},\quad
\nabla\cdot\boldsymbol{\psi}=0\quad {\rm in} ~ \Omega,
\end{equation}
where $\phi$ is a scalar potential function and $\boldsymbol\psi=(\psi_1,
\psi_2, \psi_3)$ is a vector potential function. Substituting \eqref{Helmholtz}
into the elastic wave equation \eqref{ewe}, we may verify that $\phi$ and
$\boldsymbol{\psi}$ satisfy the following Helmholtz equation and the Maxwell
equation, respectively:
\begin{equation}\label{Helmholtz1}
\Delta\phi+\kappa_1^2\phi=0,\qquad \nabla\times\left(\nabla\times
\boldsymbol{\psi}\right)-\kappa_2^2\boldsymbol{\psi}=0.
\end{equation}
It is also easy to verify from the Helmholtz decomposition \eqref{Helmholtz}
and the boundary condition \eqref{bc} that $\phi$ and $\boldsymbol\psi$ satisfy
the following coupled boundary conditions on $S$:  
\begin{equation}\label{Helmholtz2}
\partial_{\nu}\phi+\left(\nabla\times\boldsymbol{\psi}\right)\cdot
\nu=-\boldsymbol{u}^{\rm inc}\cdot\nu,\quad
\left(\nabla\times\boldsymbol{\psi}
\right)\times\nu+\nabla\phi\times\nu=-\boldsymbol{u}^{\rm inc}\times\nu,
\end{equation}
where $\nu$ is the unit normal vector on $S$. 

The potential functions $\phi$ and $\boldsymbol\psi$ are required to be
quasi-periodic in $x_1$ and $x_2$ directions with periods $\Lambda_1$ and 
$\Lambda_2$. Hence they have the Fourier series expansions
\begin{equation}\label{FourierHelmholtz}
\phi(\boldsymbol{x})=\sum\limits_{n\in\mathbb{Z}^2}
\phi_n(x_3) e^{{\rm
i}\boldsymbol{\alpha}_n\cdot\boldsymbol{r}}, \quad
\boldsymbol{\psi}(\boldsymbol{x})=\sum\limits_{n\in\mathbb{Z}^2}
\boldsymbol{\psi}_n(x_3)
e^{{\rm i}\boldsymbol{\alpha}_n\cdot\boldsymbol{r}}.
\end{equation}
Plugging \eqref{FourierHelmholtz} into \eqref{Helmholtz1} and using the bounded
outgoing wave condition, we have from a straightforward calculation that $\phi$
and $\boldsymbol{\psi}$ admit the following expansions:
\begin{eqnarray}\label{Helmholtz_decay1}
\phi(\boldsymbol{x})=\sum\limits_{n\in\mathbb{Z}^2}
\phi_n(h)e^{{\rm
i}\left(\boldsymbol{\alpha}_n\cdot\boldsymbol{r}
+\beta_{1 n}(x_3-h)\right)},\quad
\boldsymbol{\psi}(\boldsymbol{x})=\sum\limits_{n\in\mathbb{Z}^2}
\boldsymbol{\psi}_n(h) e^{{\rm
i}\left(\boldsymbol{\alpha}_n\cdot\boldsymbol{r} +\beta_{2 n}(x_3-h)\right)},
\quad x_3>h,
\end{eqnarray}
where
\begin{equation}\label{beta}
\beta_{j n}=
\begin{cases}
\big(\kappa_j^2-|\boldsymbol{\alpha}_n|^2\big)^{1/2}
\quad &\text{if} ~ |\boldsymbol{\alpha}_n|<\kappa_j,\\
{\rm i}\big(|\boldsymbol{\alpha}_n|^2-\kappa_j^2\big)^{1/2}
\quad &\text{if} ~ |\boldsymbol{\alpha}_n|>\kappa_j. 
\end{cases}
\end{equation}
It follows from \eqref{FourierHelmholtz}--\eqref{Helmholtz_decay1} that
\begin{equation}\label{Helmholtz_decay}
\phi_n(x_3)=\phi_n(h)e^{{\rm i}\beta_{1 n}(x_3-h)},\quad
\psi_{j n}(x_3)=\psi_{j n}(h)e^{{\rm i}\beta_{2 n}(x_3-h)}, \quad j=1,2,3.
\end{equation}
We observe from \eqref{Helmholtz_decay1}--\eqref{beta} that $\beta_{j n}$ is a
pure imaginary number and thus $\phi_n$ and
$\boldsymbol{\psi}_n$ are known as surface wave modes 
when $|\boldsymbol{\alpha}_n|>\kappa_j$. 

Substituting \eqref{Helmholtz_decay1} into \eqref{Helmholtz}, we obtain the
representation of the scattered field $\boldsymbol u$ in terms of the Fourier
coefficients of the potential functions $\phi$ and $\boldsymbol\psi$:
\begin{eqnarray}\label{u1}
\boldsymbol{u}(\boldsymbol{x})&=&{\rm
i}\sum\limits_{n\in\mathbb{Z}^2}\left\{\begin{bmatrix}
\alpha_{1 n}\\ 
\alpha_{2 n}\\ \beta_{1 n}\end{bmatrix}
\phi_n(h)e^{{\rm i}
\beta_{1 n}(x_3-h)}\right.\notag\\
&&\qquad \left.+\begin{bmatrix}
\alpha_{2 n}\psi_{3 n}(h)-\beta_{2 n}
\psi_{2 n}(h) \\
\beta_{2 n}\psi_{1 n}(h)-\alpha_{1 n}
\psi_{3 n}(h)\\
\alpha_{1 n}\psi_{2 n}(h)-\alpha_{2 n}\psi_{1 n}(h)
\end{bmatrix}e^{{\rm i}\beta_{2 n}(x_3-h)}\right\}e^{{\rm
i}\boldsymbol{\alpha}_n\cdot\boldsymbol{r}}. 
\end{eqnarray}
Noting $\nabla\cdot \boldsymbol{\psi}=0$, we may represent conversely the
coefficients of the potential functions of $\phi$ and $\boldsymbol{\psi}$ by
the coefficients of the scattered field $\boldsymbol{u}=(u_1, u_2, u_3)^\top$: 
\begin{eqnarray}
\phi_n(h)&=&-\frac{\rm i}{\chi_n}\left(\alpha_{1 n}u_{1 n}(h)
+\alpha_{2 n}u_{2 n}(h)+\beta_{2 n}u_{3 n}(h)\right), \label{phi}\\
\psi_{1 n}(h)&=&-\frac{\rm i}{\chi_n}\bigg(\frac{1}{\kappa_2^2}\alpha_{1
n}\alpha_{2 n} \left(\beta_{1 n}-\beta_{2 n}\right)
u_{1 n}(h)-\alpha_{2 n}u_{3 n}(h)\notag\\
&&\quad+\frac{1}{\kappa_2^2}\left[\alpha_{1 n}^2\beta_{2 n}
+\alpha_{2 n}^2 \beta_{1 n}+\beta_{1 n}\beta_{2
n}^2\right]u_{2 n}(h)\bigg),\label{psi1}\\
\psi_{2 n}(h)&=&-\frac{\rm
i}{\chi_n}\bigg(-\frac{1}{\kappa_2^2}\left[\alpha_{1 n}^2
\beta_{1 n}+\alpha_{2 n}^2\beta_{2 n}
+\beta_{1 n}\beta_{2 n}^2\right]u_{1 n}(h) \notag\\
&&\quad -\frac{1}{\kappa_2^2}\alpha_{1 n}\alpha_{2 n}\left(\beta_{1
n}-\beta_{2 n}\right)u_{2 n}(h)+\alpha_{1 n}u_{3 n} (h)\bigg),\label {psi2}\\
\psi_{3 n}(h)&=&-\frac{\rm
i}{\kappa_2^2}\left(\alpha_{2 n}u_{1 n}(h)-\alpha_{1 n}
u_{2 n}(h)\right),\label{psi3}	
\end{eqnarray}
where $\chi_n=\left|\boldsymbol{\alpha}_n\right|^2+\beta_{1 n}
\beta_{2 n}$. It is easy to verify that $\chi_n\neq 0$ for
$n\in\mathbb Z^2$. 

Define a differential operator 
\begin{equation}\label{OperatorD}
D\boldsymbol{u}=\mu\partial_{x_3}\boldsymbol{u}
+(\lambda+\mu)\left(\nabla\cdot\boldsymbol{u}\right)\boldsymbol e_3\quad
\text{on} ~ \Gamma_h,
\end{equation}
where $\boldsymbol e_3=(0, 0, 1)^\top$. Substituting \eqref{u1}--\eqref{psi3}
into the differential operator $D$, we may deduce the DtN operator
\begin{equation}\label{DtN}
T\boldsymbol{u}=\sum\limits_{n\in\mathbb{Z}^2} M_n\boldsymbol{u}_n(h)e^{{\rm
i}\boldsymbol{\alpha}_n\cdot\boldsymbol{r}},
\end{equation}
where the matrix $M_n$ is defined as
\begin{align}
M_n=\frac{{\rm i}\mu}{\chi_n}
\begin{bmatrix}
\alpha_{1 n}^2\beta^{(n)}_{12}+\beta_{2 n}\chi_n &
\alpha_{1 n}\alpha_{2 n}\beta^{(n)}_{12} &
\alpha_{1 n}\beta_{2 n}\beta^{(n)}_{12}\\
\alpha_{1 n}\alpha_{2 n}\beta^{(n)}_{12}&
\alpha_{2 n}^2\beta^{(n)}_{12}+\beta_{2 n}
\chi_n &
\alpha_{2 n}\beta_{2 n}\beta^{(n)}_{12} \\
-\alpha_{1 n}\beta_{2 n}\beta^{(n)}_{12}
&-\alpha_{2 n}\beta_{2 n}\beta^{(n)}_{12}
&\kappa_2^2 \beta_{2 n}
\end{bmatrix}.\label{Mn}
\end{align}
Here $\beta^{(n)}_{12}=\beta_{1 n}-\beta_{2 n}$. The details can be found in
\cite{JLLZ-cms18} for the derivation.

Based on the DtN operator \eqref{DtN}, the scattering problem
\eqref{ewe}--\eqref{bc} can be equivalently reduced to the following boundary
value problem:
\begin{equation}\label{BVPu}
\begin{cases}
\mu\Delta\boldsymbol{u}+(\lambda+\mu)\nabla\nabla\cdot\boldsymbol{u}
+\omega^2\boldsymbol{u}=0 \qquad &\text{in} ~ \Omega, \\
D\boldsymbol{u}=T\boldsymbol{u}  & \text{on} ~ \Gamma_h,\\
\boldsymbol{u}=-\boldsymbol{u}^{\rm inc} &\text{on} ~ S.
\end{cases}
\end{equation}
The variational problem of \eqref{BVPu} is to find $\boldsymbol{u}\in
\boldsymbol{H}_{\rm qp}^{1}(\Omega)$ with 
$\boldsymbol{u}=-\boldsymbol{u}^{\rm inc}$ on $S$ such that
\begin{equation}\label{variation1}
a(\boldsymbol{u}, \boldsymbol{v})=0 \quad \forall\, \boldsymbol{v}\in
\boldsymbol{H}_{S, {\rm qp}}^{1}(\Omega),
\end{equation}
where the sesquilinear form $a: H_{\rm qp}^{1}(\Omega)\times
H_{\rm qp}^{1}(\Omega)\to \mathbb{C} $ is
\[
a(\boldsymbol{u}, \boldsymbol{v})=\mu\int_{\Omega}
\nabla\boldsymbol{u}:\nabla\overline{\boldsymbol{v}}{\rm d}\boldsymbol{x}
+(\lambda+\mu)\int_{\Omega}\left(\nabla\cdot\boldsymbol{u}
\right)\left(\nabla\cdot\overline{\boldsymbol{v}}\right){\rm d}\boldsymbol{x}
-\omega^2\int_{\Omega}\boldsymbol{u} \cdot\overline{\boldsymbol{v}}{\rm
d}\boldsymbol{x}-\int_{\Gamma_h}T
\boldsymbol{u}\cdot\overline{\boldsymbol{v}}{\rm d}s.
\]
Here $A:B={\rm tr}(AB^\top)$ is the Frobenius inner product of two square
matrices $A$ and $B$.

The well-posedness of the variational problem \eqref{variation1}  was discussed
in \cite{EH-2012}. It was shown that the variational problem has a unique weak
solution for all but a discrete set of frequencies. Here we simply assume that
the variational problem \eqref{variation1} admits a unique solution which
satisfies the estimates
\[
\|\boldsymbol{u}\|_{\boldsymbol{H}^1(\Omega)}\lesssim \|\boldsymbol{u}^{\rm
inc}\|_{\boldsymbol{H}^{1/2}(S)}
\lesssim \|\boldsymbol{u}^{\rm inc}\|_{\boldsymbol{H}^{1}(\Omega)}.
\]
By the general theory of Babu\v{s}ka and Aziz \cite{BA-1973}, there exists a
$\gamma>0$ such that the following inf-sup condition holds: 
\begin{equation*}
\sup\limits_{0\neq \boldsymbol{v}\in
\boldsymbol{H}_{\rm qp}^{1}(\Omega)}\frac{\left|a(\boldsymbol{u},
\boldsymbol{v})\right|} {\|\boldsymbol{v}\|_{\boldsymbol{H}^1(\Omega)}}\geq
\gamma \|\boldsymbol{u}\|_{\boldsymbol{H}^1(\Omega)}\quad
\forall\, \boldsymbol{u}\in \boldsymbol{H}_{\rm qp}^{1}(\Omega).
\end{equation*}

\section{The finite element approximation}\label{section: DP}

Let $\mathcal M_h$ be a regular tetrahedral mesh of the domain $\Omega$, where
$h$ denotes the maximum diameter of all the elements in $\mathcal M_h$.
To handle the quasi-periodic solution, we assume that the mesh is periodic in
both $x_1$ and $x_2$ directions, i.e., the surface meshes on the planes $x_1=0$
and $x_2=0$ coincide with the surface meshes on the planes $x_1=\Lambda_1$
and $x_2=\Lambda_2$, respectively. We also assume for simplicity that $S$ is
polygonal to keep from using the isoparametric finite element space and deriving
the approximation error of the boundary $S$ in order to avoid being distracted
from the main focus of the a posteriori error analysis. 

Let $V_h\subset \boldsymbol{H}_{\rm qp}^{1}(\Omega)$ be a conforming finite
element space, i.e.
\[
V_h=\{\boldsymbol{v}\in C_{\rm qp}(\Omega)^3:
\boldsymbol{v}|_{K}\in P_m(K)^3 ~ \forall K\in \mathcal M_h\},
\]
where $C_{\rm qp}(\Omega)$ is the set of all continuous functions satisfying
the quasi-periodic boundary condition, $m$ is a positive integer, and $P_m$
denotes the set of all polynomials with degree no more than $m$. 

Since the non-local DtN operator \eqref{DtN} is given as an infinite series, in
practice, it needs to be truncated into a sum of finitely many terms
\begin{equation}\label{dDtN}
T_{N}\boldsymbol{u}=\sum\limits_{|n_1|,|n_2|\leq N}
M_n\boldsymbol{u}_n(h) e^{{\rm
i}\boldsymbol{\alpha}_n\cdot\boldsymbol{r}}\quad \text{on} ~ 
\Gamma_h,
\end{equation}
where $N>0$ is a sufficiently large integer. Using \eqref{dDtN}, we arrive at
the truncated finite element approximation: find $\boldsymbol{u}_N^h\in V_h$
such that $\boldsymbol{u}_N^{h}=-\boldsymbol{u}^{\rm inc}$ on $S$ and satisfies
the variational problem
\begin{equation}\label{variation2}
a_N(\boldsymbol{u}_N^h, \boldsymbol{v}^h)=0\quad \forall\,
\boldsymbol{v}^h\in V_{h,S},
\end{equation}
where $V_{h,S}=\left\{\boldsymbol{v}\in V_h: \boldsymbol{v}=0 \,{\rm
on}\,S\right\}$ and the sesquilinear form 
$a_N: V_h\times V_h\rightarrow \mathbb{C}$ is 
\begin{align*}
a_N(\boldsymbol{u}, \boldsymbol{v})=\mu\int_{\Omega}
\nabla\boldsymbol{u}:\nabla\overline{\boldsymbol{v}}{\rm d}\boldsymbol{x}
+(\lambda+\mu)\int_{\Omega}\left(\nabla\cdot\boldsymbol{u}
\right)\left(\nabla\cdot\overline{\boldsymbol{v}}\right){\rm d}\boldsymbol{x}
-\omega^2\int_{\Omega}\boldsymbol{u}\cdot \overline{\boldsymbol{v}}{\rm
d}\boldsymbol{x}-\int_{\Gamma_h}T_N\boldsymbol{u}\cdot
\overline{\boldsymbol{v}}{\rm d}s.
\end{align*}

By \cite{S-mc74}, the discrete inf-sup condition of the sesquilinear form $a_N$
can be established for sufficient large $N$ and small enough $h$. Based on the
general theory in \cite{BA-1973}, it can be shown that the discrete variational
problem \eqref{variation2}  has a unique solution $\boldsymbol{u}_N^h\in V_h$.
The details are omitted for brevity since our focus is on the a posteriori
error estimate. 

\section{The a posteriori error analysis}\label{section: PEA}

For any  tetrahedral element $K\in\mathcal M_h$, denote by $h_K$ its diameter.
Define the operator residual in $K$ as
\[
R_{K}\boldsymbol{u}=\big(\mu\Delta\boldsymbol{u}
+(\lambda+\mu)\nabla\nabla\cdot\boldsymbol{u}
+\omega^2 \boldsymbol{u}\big)|_{K}.
\]
Let $\mathcal{F}_h$ be the set of all the faces on $\mathcal M_h$. Given any
interior face $F\in \mathcal{F}_h$, which is the common face of tetrahedral 
element $K_1$ and $K_2$, we define the jump residual across $F$ as
\[
J_{F}\boldsymbol{u}=\mu\nabla\boldsymbol{u}|_{K_1}
\cdot\nu_1+(\lambda+\mu)\left(\nabla\cdot\boldsymbol{u}|_{K_1}\right)\nu_1
+\mu\nabla\boldsymbol{u}|_{K_2}
\cdot\nu_2+(\lambda+\mu)\left(\nabla\cdot\boldsymbol{u}|_{K_2}\right)\nu_2,
\]
where $\nu_j$, $j=1,2$ is the unit outward normal vector on the face of $K_j$.
For any boundary face $F\in \mathcal{F}_h\cap \Gamma_h$, define the jump
residual as
\[
J_{F}\boldsymbol{u}=2\left(T_N\boldsymbol{u}-D\boldsymbol{u}\right).
\]

Denote the four lateral boundary surfaces by 
\begin{eqnarray*} 
&& \Gamma_{10}=\left\{\boldsymbol{x}\in\mathbb{R}^3: x_1=0, 0<x_2<\Lambda_2,
f(0,x_2)<x_3<h\right\},\\
&& \Gamma_{11}=\left\{\boldsymbol{x}\in\mathbb{R}^3: x_1=\Lambda_1,
0<x_2<\Lambda_2, f(\Lambda_1,x_2)<x_3<h\right\},\\
&& \Gamma_{20}=\left\{\boldsymbol{x}\in\mathbb{R}^3: 0<x_1<\Lambda_1, x_2=0,
f(x_1, 0)<x_3<h\right\},\\
&& \Gamma_{21}=\left\{\boldsymbol{x}\in\mathbb{R}^3: 0<x_1<\Lambda_1,
x_2=\Lambda_2, f(x_1, \Lambda_2)<x_3<h\right\}.
\end{eqnarray*}
For any boundary face $F\in \Gamma_{10}$ and the corresponding face
$F'\in\Gamma_{11}$, if $F\in K_1$ and $F'\in K_2$, then the jump residual
is defined as
\begin{eqnarray*}
J_{F}^{(1)}\boldsymbol{u}&=&\left[\mu\partial_{x_1}\boldsymbol{u}|_{K_1}
+(\lambda+\mu)(\nabla\cdot\boldsymbol{u}|_{K_1}) \boldsymbol e_1 
\right]-e^{-{\rm i}\alpha_1\Lambda_1}\left[\mu\partial_{x_1}\boldsymbol{u}
|_{K_2}+(\lambda+\mu) (\nabla\cdot\boldsymbol{u}|_{K_2})\boldsymbol e_1
\right],\\
J_{F'}^{(1)}\boldsymbol{u}&=&e^{{\rm
i}\alpha_1\Lambda_1}\left[\mu\partial_{x_1} \boldsymbol{u}|_{K_1}
+(\lambda+\mu)(\nabla\cdot\boldsymbol{u}|_{K_1})\boldsymbol e_1
\right]-\left[\mu\partial_{x_1}\boldsymbol{u}|_{K_2}
+(\lambda+\mu)(\nabla\cdot\boldsymbol{u}|_{K_2})\boldsymbol e_1 \right],
\end{eqnarray*}
where $\boldsymbol e_1=(1, 0, 0)^\top$. Similarly, for any face $F\in
\Gamma_{20}$ and its corresponding face $F'\in \Gamma_{21}$, the jump residual
is defined as
\begin{eqnarray*}
J_{F}^{(2)}\boldsymbol{u}&=&\left[\mu\partial_{x_2}\boldsymbol{u}|_{K_1}
+(\lambda+\mu) (\nabla\cdot\boldsymbol{u}|_{K_1}) \boldsymbol e_2 
\right]-e^{-{\rm
i}\alpha_2\Lambda_2}\left[\mu\partial_{x_2}\boldsymbol{u}|_{K_2} +(\lambda+\mu)
(\nabla\cdot\boldsymbol{u}|_{K_2})\boldsymbol e_2\right],\\
J_{F'}^{(2)}\boldsymbol{u}&=&e^{{\rm
i}\alpha_2\Lambda_2}\left[\mu\partial_{x_2}\boldsymbol{u}|_{K_1}
+(\lambda+\mu)(\nabla\cdot\boldsymbol{u}|_{K_1}) \boldsymbol e_2
\right]-\left[\mu\partial_{x_2}
\boldsymbol{u}|_{K_2}+(\lambda+\mu)(\nabla\cdot\boldsymbol{u}|_{K_2})\boldsymbol
e_2 \right],
\end{eqnarray*}
where $\boldsymbol e_2=(0, 1, 0)^\top$.

For any tetrahedral element $K\in\mathcal M_h$, denote by $\eta_{K}$ the local
error estimator as follows:
\[
\eta_K^2 =h_K^2 \|R_{K}
\boldsymbol{u}\|_{\boldsymbol{L}^2(K)}^2+h_K\sum\limits_{F\subset\partial
K}\left(\|J_F^{(1)}\boldsymbol{u}\|^2_{\boldsymbol{L}^2(F)}+\|J_F^{(2)}
\boldsymbol{u}\|^2_{\boldsymbol{L}^2(F)}\right).
\]

For convenience, we introduce a weighted $\boldsymbol{H}^1(\Omega)$ norm 
\begin{equation}\label{weightnorm}
\vvvert 
\boldsymbol{u}\vvvert_{\boldsymbol{H}^1(\Omega)}^2=\mu\int_{\Omega}|\nabla
\boldsymbol{u}|^2 {\rm d}\boldsymbol{x}
+(\lambda+\mu)\int_{\Omega} |\nabla\cdot\boldsymbol{u}|^2 {\rm d}\boldsymbol{x}
+\omega^2\int_{\Omega} |\boldsymbol{u}|^2 {\rm d}\boldsymbol{x}.
\end{equation}
Since $\mu$ and $\lambda+\mu$ are positive, it is easy to check that 
\begin{equation*}
\min  \left(\mu, \omega^2\right)
\|\boldsymbol{u}\|_{\boldsymbol{H}^1(\Omega)}^2\leq
\vvvert \boldsymbol{u}\vvvert_{\boldsymbol{H}^{1}(\Omega)}^2\leq
\max \left(2\lambda+3\mu, \omega^2\right)
\|\boldsymbol{u}\|_{\boldsymbol{H}^1(\Omega)}^2
\quad \forall\, \boldsymbol{u}\in\boldsymbol{H}^{1}(\Omega), 
\end{equation*}
which implies that the weighted $\boldsymbol{H}^1(\Omega)$ norm
\eqref{weightnorm} is equivalent to the standard $\boldsymbol{H}^1(\Omega)$
norm.

The following theorem is the main result of the paper. It presents the a
posteriori error estimate between the solutions of the original scattering
problem \eqref{variation1} and the truncated finite element approximation
\eqref{variation2}. 

\begin{theorem}\label{mainthm}
Let $\boldsymbol{u}$ and $\boldsymbol{u}_N^h$ be the solutions of the
variational problems \eqref{variation1} and \eqref{variation2}, 
respectively. Then for any $\hat h$ such that
$\max_{\boldsymbol{r}\in\mathbb{R}^2} f(\boldsymbol{r})<\hat h<h$ and
for sufficiently large $N$, the following a posteriori error estimate holds:
\begin{equation}\label{mainresult}
\vvvert \boldsymbol{u}-\boldsymbol{u}_N^h\vvvert
_{\boldsymbol{H}^{1}(\Omega)}\lesssim
\left(\sum_{K\in\mathcal M_h}\eta_K^2\right)^{1/2}+\max_{|n|_{\min}>N}
\left(|n|_{\rm
max}e^{-|\beta_{2 n}|(h-\hat h)}\right)\|\boldsymbol {u}^{\rm
inc}\|_{\boldsymbol{H}^{1}(\Omega)},
\end{equation}
where $|n|_{\min}=\min (|n_1|, |n_2|)$ and $|n|_{\max}=\max(|n_1|,
|n_2|)$.
\end{theorem}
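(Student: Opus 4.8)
The plan is to estimate the error $\boldsymbol\xi:=\boldsymbol u-\boldsymbol u_N^h\in\boldsymbol H^1_{S,{\rm qp}}(\Omega)$ (both solutions equal $-\boldsymbol u^{\rm inc}$ on $S$) by a residual-based duality argument carried out \emph{against the truncated form $a_N$ rather than $a$}. The point is that, although $a(\boldsymbol u,\boldsymbol v)=0$ only for the full form, the identity $a_N-a=\langle(T-T_N)\cdot,\cdot\rangle_{\Gamma_h}$ gives $a_N(\boldsymbol u,\boldsymbol v)=\langle(T-T_N)\boldsymbol u,\boldsymbol v\rangle_{\Gamma_h}$ for all $\boldsymbol v\in\boldsymbol H^1_{S,{\rm qp}}(\Omega)$, so the DtN truncation enters the error equation only through $(T-T_N)$ \emph{applied to the exact solution}. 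Subtracting the discrete equation $a_N(\boldsymbol u_N^h,\Pi_h\boldsymbol v)=0$, where $\Pi_h:\boldsymbol H^1_{S,{\rm qp}}(\Omega)\to V_{h,S}$ is a quasi-periodicity-preserving Scott--Zhang interpolation, yields the error identity
\begin{equation*}
a_N(\boldsymbol\xi,\boldsymbol v)=\langle(T-T_N)\boldsymbol u,\boldsymbol v\rangle_{\Gamma_h}-a_N(\boldsymbol u_N^h,\boldsymbol v-\Pi_h\boldsymbol v),\qquad\boldsymbol v\in\boldsymbol H^1_{S,{\rm qp}}(\Omega).
\end{equation*}
It then remains to bound $\vvvert\boldsymbol\xi\vvvert_{\boldsymbol H^1(\Omega)}$ by the two terms on the right. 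I would do this using the stability of $a_N$ available for large $N$: either a continuous inf-sup condition following Schatz \cite{S-mc74}, or --- this is the duality argument announced in the introduction --- a G{\aa}rding inequality $\vvvert\boldsymbol\xi\vvvert_{\boldsymbol H^1(\Omega)}^2\lesssim|a_N(\boldsymbol\xi,\boldsymbol\xi)|+\|\boldsymbol\xi\|_{\boldsymbol L^2(\Omega)}^2$ together with an $L^2$-estimate of $\boldsymbol\xi$ obtained by inserting into the same identity the solution of the adjoint truncated problem with datum $\boldsymbol\xi$. The G{\aa}rding step needs $\mathrm{Re}\,\langle T_N\boldsymbol w,\boldsymbol w\rangle_{\Gamma_h}\lesssim\|\boldsymbol w\|_{\boldsymbol L^2(\Gamma_h)}^2$ uniformly in $N$, which holds because $\mathrm{Re}(\boldsymbol w_n^{*}M_n\boldsymbol w_n)\le0$ for every evanescent mode $|\boldsymbol\alpha_n|>\kappa_2$, leaving only the $N$-independent finite set of propagating and mixed modes with the wrong sign, absorbed by a trace--interpolation inequality.

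The second term is the standard residual machinery. Integrating by parts elementwise in $a_N(\boldsymbol u_N^h,\boldsymbol v-\Pi_h\boldsymbol v)$ produces the interior residuals $R_K\boldsymbol u_N^h$; on interior faces the boundary fluxes assemble into $J_F\boldsymbol u_N^h$; on $\Gamma_h$ the boundary integral deposited by the volume terms is exactly $\int_{\Gamma_h}D\boldsymbol u_N^h\cdot\overline{(\boldsymbol v-\Pi_h\boldsymbol v)}$, which combines with the $-\int_{\Gamma_h}T_N\boldsymbol u_N^h\cdot\overline{(\boldsymbol v-\Pi_h\boldsymbol v)}$ term from $a_N$ into $2(T_N\boldsymbol u_N^h-D\boldsymbol u_N^h)=J_F\boldsymbol u_N^h$; and on the four lateral faces the quasi-periodicity of $\boldsymbol v-\Pi_h\boldsymbol v$ turns the non-cancelling normal fluxes into $J_F^{(1)}\boldsymbol u_N^h$ and $J_F^{(2)}\boldsymbol u_N^h$. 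Cauchy--Schwarz, the local interpolation estimates $\|\boldsymbol v-\Pi_h\boldsymbol v\|_{\boldsymbol L^2(K)}\lesssim h_K\|\boldsymbol v\|_{\boldsymbol H^1(\widetilde K)}$ and $\|\boldsymbol v-\Pi_h\boldsymbol v\|_{\boldsymbol L^2(F)}\lesssim h_F^{1/2}\|\boldsymbol v\|_{\boldsymbol H^1(\widetilde F)}$, and the finite overlap of patches then give $|a_N(\boldsymbol u_N^h,\boldsymbol v-\Pi_h\boldsymbol v)|\lesssim\big(\sum_{K\in\mathcal M_h}\eta_K^2\big)^{1/2}\,\|\boldsymbol v\|_{\boldsymbol H^1(\Omega)}$, with $\eta_K$ the local estimator evaluated at $\boldsymbol u_N^h$.

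The first term is the crux. By the trace theorem $|\langle(T-T_N)\boldsymbol u,\boldsymbol v\rangle_{\Gamma_h}|\lesssim\|(T-T_N)\boldsymbol u\|_{\boldsymbol H^{-1/2}(\Gamma_h)}\|\boldsymbol v\|_{\boldsymbol H^1(\Omega)}$, so one must show that $\|(T-T_N)\boldsymbol u\|_{\boldsymbol H^{-1/2}(\Gamma_h)}=\big(\Lambda_1\Lambda_2\sum_{|n|_{\max}>N}(1+|\boldsymbol\alpha_n|^2)^{-1/2}\|M_n\boldsymbol u_n(h)\|^2\big)^{1/2}$ decays exponentially. The key observation: since $\hat h>\max_{\boldsymbol r}f(\boldsymbol r)$, the scattered field solves the homogeneous Navier equation in the whole slab $\{x_3>\hat h\}$, so by the Helmholtz decomposition and the outgoing expansions \eqref{Helmholtz_decay1} its Fourier coefficients propagate from $\Gamma_{\hat h}$ with factors $e^{{\rm i}\beta_{1n}(h-\hat h)}$ on the compressional part and $e^{{\rm i}\beta_{2n}(h-\hat h)}$ on the shear part, whose moduli for $|\boldsymbol\alpha_n|>\kappa_j$ are $e^{-|\beta_{jn}|(h-\hat h)}$; since $\kappa_1<\kappa_2$ one has $|\beta_{1n}|>|\beta_{2n}|$, so the slower shear rate governs the decay. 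Feeding this back through \eqref{u1}--\eqref{psi3} and into $M_n$ in \eqref{Mn}, and using $\chi_n=|\boldsymbol\alpha_n|^2+\beta_{1n}\beta_{2n}$ bounded above and below, $\beta^{(n)}_{12}=O(|\boldsymbol\alpha_n|^{-1})$, hence $\|M_n\|\lesssim|\boldsymbol\alpha_n|\asymp|n|_{\max}$, one arrives at $\|M_n\boldsymbol u_n(h)\|\lesssim|n|_{\max}e^{-|\beta_{2n}|(h-\hat h)}\|\boldsymbol u_n(\hat h)\|$; with $(1+|\boldsymbol\alpha_n|^2)^{-1}|n|_{\max}^2\asymp1$ and $\sum_n\|\boldsymbol u_n(\hat h)\|^2\lesssim\|\boldsymbol u\|_{\boldsymbol H^1(\Omega)}^2\lesssim\|\boldsymbol u^{\rm inc}\|_{\boldsymbol H^1(\Omega)}^2$ (trace theorem and the assumed a priori estimate), pulling the slowest factor out of the high-frequency sum --- after splitting it according to whether one or both indices exceed $N$ --- yields $\|(T-T_N)\boldsymbol u\|_{\boldsymbol H^{-1/2}(\Gamma_h)}\lesssim\max_{|n|_{\min}>N}\left(|n|_{\max}e^{-|\beta_{2n}|(h-\hat h)}\right)\|\boldsymbol u^{\rm inc}\|_{\boldsymbol H^1(\Omega)}$. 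Combining the three estimates and the equivalence of $\vvvert\cdot\vvvert_{\boldsymbol H^1(\Omega)}$ with $\|\cdot\|_{\boldsymbol H^1(\Omega)}$ gives \eqref{mainresult}.

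The hard part is this last paragraph, and specifically that the change of variables \eqref{phi}--\eqref{psi3} between $\boldsymbol u_n$ and $(\phi_n,\boldsymbol\psi_n)$ is \emph{not} uniformly well conditioned --- $\|\boldsymbol\psi_n\|$ can exceed $\|\boldsymbol u_n\|$ by powers of $|\boldsymbol\alpha_n|$ --- so one cannot bound the compressional and shear contributions separately; instead the composite propagation map $\boldsymbol u_n(\hat h)\mapsto\boldsymbol u_n(h)$ must be tracked as a whole, its spurious polynomial factors shown to cancel, and the \emph{slower} rate $\beta_{2n}$ (not $\beta_{1n}$) confirmed to be the one that survives, so that all remaining algebraic powers collapse to the single $|n|_{\max}$ in \eqref{mainresult}. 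This compressional/shear coupling --- replaced in the two-dimensional case by two scalar Helmholtz equations --- is precisely what makes the three-dimensional estimate delicate. A secondary but indispensable point is that every constant (in the G{\aa}rding inequality for $a_N$, in the solvability and regularity of the adjoint truncated problem used for the $L^2$-duality, and in the stability of $a_N$ itself) be independent of $N$; this is why the estimate is only claimed for sufficiently large $N$.
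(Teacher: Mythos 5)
Your overall strategy coincides with the paper's: a G\aa rding-type inequality whose boundary term is controlled by the sign of $\Re(M_n\boldsymbol w_n\cdot\overline{\boldsymbol w_n})$ on evanescent modes (the paper's Lemmas \ref{lemma4}--\ref{lemma5}), a standard residual bound for the Galerkin part (Lemma \ref{lemma3}), an $L^2$ duality step, and --- the crux --- the exponential decay of $(T-T_N)\boldsymbol u$ obtained by propagating the Fourier coefficients from $\Gamma_{\hat h}$ to $\Gamma_h$ through the Helmholtz decomposition and tracking the composite map $\boldsymbol u_n(\hat h)\mapsto\boldsymbol u_n(h)$ as a whole (the paper's matrix $P_n$ in \eqref{decayU} and the cancellations leading to \eqref{aymptoticP}). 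Your diagnosis of why the compressional/shear coupling forces one to work with the composite propagation matrix, and why the slower rate $|\beta_{2n}|$ survives, is exactly the content of Lemma \ref{lemma2}.

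The one structural deviation is the duality step, and it conceals the only real gap. You pose the dual problem with the \emph{truncated} form $a_N$ and then invoke ``solvability and regularity of the adjoint truncated problem'' with constants independent of $N$. That uniform stability is not routine: as the paper itself emphasizes, $T-T_N$ does \emph{not} tend to zero in the operator norm from $\boldsymbol H^{1/2}(\Gamma_h)$ to $\boldsymbol H^{-1/2}(\Gamma_h)$ (the entries of $M_n$ grow like $|n|_{\max}\asymp(1+|\boldsymbol\alpha_n|^2)^{1/2}$), so one cannot deduce an $N$-uniform inf-sup constant for $a_N$ by perturbing the inf-sup condition for $a$. The paper avoids this entirely by using the \emph{exact} adjoint problem \eqref{DualProblem}, whose well-posedness is already assumed, and then paying for the mismatch with the extra term $\int_{\Gamma_h}(T-T_N)\boldsymbol\xi\cdot\overline{\boldsymbol p}\,{\rm d}s$. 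Controlling that term is the announced ``new duality argument'': Lemmas \ref{lemmaZ}--\ref{lemmaP} perform an asymptotic analysis of the dual solution $\boldsymbol p$ (via a Helmholtz decomposition of $\boldsymbol\xi$ in the slab $\Omega'$, explicit variation-of-parameters solutions of the resulting ODE systems, and the matrices $K_n$, $P_n$) to show that the high-frequency Fourier coefficients $p_{jn}(h)$ are small enough that $|\int_{\Gamma_h}(T-T_N)\boldsymbol\xi\cdot\overline{\boldsymbol p}\,{\rm d}s|\lesssim N^{-1}\|\boldsymbol\xi\|^2_{\boldsymbol H^1(\Omega)}$, after which the term is absorbed for large $N$. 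Your plan replaces all of this machinery with an unproven $N$-uniform stability assertion; to complete the proof along your route you would have to establish that assertion, and the natural way to do so is essentially to reproduce the paper's asymptotic analysis of the dual solution. Everything else in your proposal is sound.
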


The a posteriori error \eqref{mainresult} contains two parts: the finite element
discretization error and the truncation error of the DtN operator. Since
$\hat h<h$, the latter is almost exponentially decaying. Hence the DtN truncated
error can be controlled to be small enough so that it does not contaminate the
finite element discretization error. 

To prove Theorem \ref{mainthm}, let us begin with the following trace result in
$\boldsymbol{H}_{\rm qp}^{1}(\Omega)$. The proof can be found
in \cite[Lemma 3.3]{JLLZ-cms18}. 

\begin{lemma}\label{lemma1}
Let $a=\min_{\boldsymbol r\in\mathbb R^2}f(\boldsymbol r)$. Then for any
$\boldsymbol{u}\in \boldsymbol{H}_{\rm qp}^{1}(\Omega)$ the following estimate
holds: 
\[
\|\boldsymbol{u}\|_{\boldsymbol{H}^{1/2}(\Gamma_{h})}\leq
C\|\boldsymbol{u}\|_{\boldsymbol{H}^{1}(\Omega)}, 
\]
where $C=(1+(h-a)^{-1})^{1/2}$. 
\end{lemma}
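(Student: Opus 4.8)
The plan is to reduce the estimate to a one–dimensional trace inequality handled Fourier mode by Fourier mode. First I would bring the problem to a convenient form. Since $C^{\infty}(\overline{\Omega})\cap H^{1}_{\rm qp}(\Omega)$ is dense in $H^{1}_{\rm qp}(\Omega)$ and both sides of the asserted inequality are continuous in the $\boldsymbol{H}^{1}(\Omega)$–norm, it suffices to prove the bound for smooth quasi–periodic fields; and since the norms of $\boldsymbol{H}^{1/2}(\Gamma_{h})$ and $\boldsymbol{H}^{1}(\Omega)$ are the $\ell^{2}$–combinations of the corresponding norms of the three Cartesian components, it is enough to treat a single scalar $u\in H^{1}_{\rm qp}(\Omega)$. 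On the flat boundary $\Gamma_{h}$ expand the trace as $u(\boldsymbol r,h)=\sum_{n\in\mathbb Z^{2}}u_{n}(h)e^{{\rm i}\boldsymbol\alpha_{n}\cdot\boldsymbol r}$, so that by the definition of the trace norm
\[
\|u\|_{H^{1/2}(\Gamma_{h})}^{2}=\Lambda_{1}\Lambda_{2}\sum_{n\in\mathbb Z^{2}}\bigl(1+|\boldsymbol\alpha_{n}|^{2}\bigr)^{1/2}\,|u_{n}(h)|^{2}.
\]
The geometric input is that above every horizontal point $\boldsymbol r\in(0,\Lambda_{1})\times(0,\Lambda_{2})$ the vertical segment $\{(\boldsymbol r,t):f(\boldsymbol r)<t<h\}$ is contained in $\Omega$ and reaches down to the graph $x_{3}=f(\boldsymbol r)\ge a$; running the one–dimensional argument along these full segments, rather than only on the sub–slab $\{x_{3}>\max_{\boldsymbol r}f\}\subset\Omega$, is what is needed in order to obtain the constant featuring $a=\min_{\boldsymbol r}f$.

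The core computation is the one–dimensional trace bound. For a mode $n$, write the fundamental theorem of calculus
\[
|u_{n}(h)|^{2}=|u_{n}(t)|^{2}+2\int_{t}^{h}{\rm Re}\bigl(\overline{u_{n}}\,u_{n}'\bigr)(x_{3})\,{\rm d}x_{3},
\]
average this identity in $t$ over the vertical interval at our disposal, and estimate the resulting iterated integral by Fubini, which produces a term $|u_{n}|^{2}$ weighted by $1/(h-a)$ together with a cross term $\int|u_{n}|\,|u_{n}'|$. Multiplying through by $(1+|\boldsymbol\alpha_{n}|^{2})^{1/2}$ and invoking the elementary inequalities
\[
2\bigl(1+|\boldsymbol\alpha_{n}|^{2}\bigr)^{1/2}\,|u_{n}|\,|u_{n}'|\le\bigl(1+|\boldsymbol\alpha_{n}|^{2}\bigr)\,|u_{n}|^{2}+|u_{n}'|^{2},\qquad\bigl(1+|\boldsymbol\alpha_{n}|^{2}\bigr)^{1/2}\le 1+|\boldsymbol\alpha_{n}|^{2},
\]
folds both terms exactly into the integrand $(1+|\boldsymbol\alpha_{n}|^{2})|u_{n}|^{2}+|u_{n}'|^{2}$ that appears, after a horizontal Parseval identity, in $\|u\|_{H^{1}(\Omega)}^{2}$. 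The outcome is the per–mode estimate
\[
\bigl(1+|\boldsymbol\alpha_{n}|^{2}\bigr)^{1/2}|u_{n}(h)|^{2}\le\Bigl(1+\tfrac{1}{h-a}\Bigr)\int\Bigl[\bigl(1+|\boldsymbol\alpha_{n}|^{2}\bigr)|u_{n}|^{2}+|u_{n}'|^{2}\Bigr]{\rm d}x_{3}.
\]

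Summing over $n\in\mathbb Z^{2}$ and multiplying by $\Lambda_{1}\Lambda_{2}$, Parseval's identity in the $x_{1},x_{2}$ variables recombines the right–hand side into $(1+(h-a)^{-1})\int(|u|^{2}+|\nabla u|^{2})$ over the relevant part of $\Omega$, which is at most $(1+(h-a)^{-1})\|u\|_{H^{1}(\Omega)}^{2}$; taking square roots gives the scalar bound with $C=(1+(h-a)^{-1})^{1/2}$, and the vector case follows by summing the three components. The step I expect to be the main obstacle is reconciling the Fourier decomposition on $\Gamma_{h}$ with the non–flat lower boundary $x_{3}=f(\boldsymbol r)$: a horizontal slice of $\Omega$ below the level $\max_{\boldsymbol r}f$ is not a full period cell, so $u$ cannot be Fourier–expanded on it directly, and carrying the one–dimensional estimate all the way down to $x_{3}=a$ forces one to work along the physical vertical segments $(f(\boldsymbol r),h)$, whose lengths depend on $\boldsymbol r$. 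Organizing this $\boldsymbol r$–dependent bookkeeping so that the constant still comes out as the clean $(1+(h-a)^{-1})^{1/2}$ is the one genuinely technical point; if one were content with the weaker constant $(1+(h-\max_{\boldsymbol r}f)^{-1})^{1/2}$, the whole argument would collapse to the standard slab trace computation applied to $\{x_{3}>\max_{\boldsymbol r}f\}\subset\Omega$.
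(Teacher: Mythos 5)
Your preliminary reductions (density, componentwise treatment, the Fourier form of the $H^{1/2}(\Gamma_h)$ norm) are fine, and the slab-type computation you outline is indeed the standard route — note the paper itself does not prove this lemma but cites \cite[Lemma 3.3]{JLLZ-cms18}. However, your core computation has a genuine gap exactly at the point you flag and do not resolve. The mode-wise identity $|u_n(h)|^2=|u_n(t)|^2+2\int_t^h\Re(\overline{u_n}\,u_n')\,{\rm d}x_3$ presupposes that the Fourier coefficient $u_n(t)$ exists, and it does so only for $t\geq\max_{\boldsymbol r}f$, since below that level the horizontal slice of $\Omega$ is not a full period cell. Hence the "averaging in $t$ over the vertical interval at our disposal" can only be carried out over $(\max_{\boldsymbol r}f,\,h)$ and produces the factor $(h-\max_{\boldsymbol r}f)^{-1}$, not $(h-a)^{-1}$. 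The alternative you suggest — running the fundamental theorem of calculus along the physical segments $(f(\boldsymbol r),h)$ — works only pointwise in $\boldsymbol r$ and, after integration in $\boldsymbol r$, yields merely the $L^2(\Gamma_h)$ trace bound: there is no way to attach the weight $(1+|\boldsymbol\alpha_n|^2)^{1/2}$ mode by mode without Parseval on full horizontal slices. So what your argument actually proves is the estimate with $C=(1+(h-\hat h)^{-1})^{1/2}$ for any $\hat h$ with $\max_{\boldsymbol r}f\leq\hat h<h$, i.e.\ precisely the "weaker constant" you mention at the end.

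Moreover, the missing step cannot be supplied, because the statement with $a=\min_{\boldsymbol r}f$ and a constant independent of $f$ is false in general. Take $\Lambda_1=\Lambda_2=1$, $h=1$, and let $f$ be a Lipschitz biperiodic function equal to $1-\epsilon$ on a disk $D$ compactly contained in the period cell and equal to $0$ away from $D$, so $a=0$ and $h-a=1$ while $h-\max f=\epsilon$. For a fixed $g\in C_c^\infty(D)$ set $u(\boldsymbol r,x_3)=g(\boldsymbol r)$ on $\Omega$; this $u$ belongs to $H^1_{\rm qp}(\Omega)$, its trace on $\Gamma_h$ is $g$, so $\|u\|_{H^{1/2}(\Gamma_h)}$ is a fixed positive number, whereas $\|u\|^2_{H^1(\Omega)}=\int(h-f)(|g|^2+|\nabla g|^2)\,{\rm d}\boldsymbol r=\epsilon\,\|g\|^2_{H^1(D)}$. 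The ratio of the two sides therefore blows up like $\epsilon^{-1/2}$, while $(1+(h-a)^{-1})^{1/2}=\sqrt 2$ stays bounded; any valid constant must involve $h-\max_{\boldsymbol r}f$ (or the Lipschitz character of $f$ through an extension operator), and the printed $\min$ appears to be a slip carried over from the flat-interface setting of the cited reference. The practical remedy is to prove and use the version with $\hat h>\max_{\boldsymbol r}f$, which your slab computation does establish and which is all the paper needs, since the lemma only enters the subsequent estimates through an unspecified multiplicative constant.
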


Denote by $\boldsymbol{\xi}=\boldsymbol{u}-\boldsymbol{u}_N^{h}$ the error
between the solutions of \eqref{variation1} and \eqref{variation2}, then a
simple calculation yields
\begin{eqnarray}\label{Error}
\vvvert \boldsymbol{\xi}\vvvert_{\boldsymbol{H}^{1}(\Omega)}^2 
&=&\mu\int_{\Omega}\nabla\boldsymbol{\xi}:\nabla\overline{\boldsymbol{\xi}}\,{
\rm d}\boldsymbol{x} +(\lambda+\mu)\int_{\Omega}(\nabla\cdot\boldsymbol{\xi}
)(\nabla\cdot\overline{\boldsymbol{\xi}})\,{\rm d}\boldsymbol{x}
+\omega^2\int_{\Omega}\boldsymbol{\xi}\cdot\overline{\boldsymbol{\xi}}\,{\rm
d}\boldsymbol{x}\notag\\
&=&  \Re a(\boldsymbol{\xi},
\boldsymbol{\xi})+2\omega^2\int_{\Omega}\boldsymbol{\xi}\cdot\overline{
\boldsymbol{\xi}}\,{\rm d}\boldsymbol{x}
+\Re\int_{\Gamma_{h}}T\boldsymbol{\xi}\cdot\overline
{\boldsymbol{\xi }}\,{\rm d}s\notag\\
&=&\Re a(\boldsymbol{\xi}, \boldsymbol{\xi})
+\Re\int_{\Gamma_{h}}
\left(T-T_N\right)\boldsymbol{\xi}\cdot\overline{\boldsymbol
{\xi}}\,{\rm  d}s +2\omega^2\int_{\Omega}
\boldsymbol{\xi}\cdot\overline{\boldsymbol{\xi}}\,{\rm
d}\boldsymbol{x}+\Re\int_{\Gamma_{h}}T_N\boldsymbol{\xi}
\cdot\overline{\boldsymbol{ \xi}}\,{\rm d}s.
\end{eqnarray}
Due to the equivalence of the weighted norm
$\vvvert\cdot\vvvert_{\boldsymbol{H}^{1}(\Omega)}$ to the standard norm
$\|\cdot\|_{\boldsymbol{H}^1(\Omega)}$, it suffices to estimate the four terms
on the right hand side of \eqref{Error} one by one. The estimates of the first
two terms are given in Lemmas \ref{lemma2} and \ref{lemma3}. 

\begin{lemma}\label{lemma2}
Let $\boldsymbol{u}\in \boldsymbol{H}_{\rm qp}^1(\Omega)$ be the solution of
variational problem \eqref{variation1}. For any $\boldsymbol{v}\in
\boldsymbol{H}_{\rm qp}^{1}(\Omega)$ and a sufficiently large $N$, the following
estimate holds: 
\[
\left|\int_{\Gamma_{h}}\left(T-T_N\right)\boldsymbol{u}\cdot\overline{
\boldsymbol{v}}\,{\rm d}s\right|\lesssim
\max\limits_{|n|_{\min}>N}\left(|n|_{\max}e^{-|\beta_{2 n}|(h-\hat h)}\right)
\|\boldsymbol{u}^{\rm inc}\|_{\boldsymbol{H}^1(\Omega)}
\|\boldsymbol{v}\|_{\boldsymbol{H}^{1}(\Omega)}.
\]
\end{lemma}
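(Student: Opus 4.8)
The plan is to pass to the quasi-periodic Fourier series on $\Gamma_h$, to use the explicit form \eqref{Mn} of the symbol $M_n$ together with the exponential decay of the evanescent modes of $\boldsymbol{u}$ across the slab between $\Gamma_{\hat h}$ and $\Gamma_h$, and then to assemble the estimate from the Cauchy--Schwarz inequality, the trace bound of Lemma \ref{lemma1}, and the a priori estimate $\|\boldsymbol{u}\|_{\boldsymbol{H}^1(\Omega)}\lesssim\|\boldsymbol{u}^{\rm inc}\|_{\boldsymbol{H}^1(\Omega)}$.

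First I would expand $\boldsymbol{u}$ and $\boldsymbol{v}$ on $\Gamma_h$ in the basis $\{e^{{\rm i}\boldsymbol{\alpha}_n\cdot\boldsymbol{r}}\}_{n\in\mathbb{Z}^2}$; by its $L^2_{\rm qp}(\Gamma_h)$-orthogonality and $(T-T_N)\boldsymbol{u}=\sum_{|n|_{\max}>N}M_n\boldsymbol{u}_n(h)e^{{\rm i}\boldsymbol{\alpha}_n\cdot\boldsymbol{r}}$,
\[
\Big|\int_{\Gamma_h}(T-T_N)\boldsymbol{u}\cdot\overline{\boldsymbol{v}}\,{\rm d}s\Big|\le\Lambda_1\Lambda_2\sum_{|n|_{\max}>N}\|M_n\|\,|\boldsymbol{u}_n(h)|\,|\boldsymbol{v}_n(h)|.
\]
Next, from \eqref{beta} one has, for $|n|$ large, $\beta_{1n},\beta_{2n}$ purely imaginary with $\beta_{jn}/|\boldsymbol{\alpha}_n|\to{\rm i}$, hence $\beta^{(n)}_{12}=O(|\boldsymbol{\alpha}_n|^{-1})$ and $\chi_n=|\boldsymbol{\alpha}_n|^2+\beta_{1n}\beta_{2n}\to(\kappa_1^2+\kappa_2^2)/2$, so that $\chi_n$ is bounded and bounded away from zero; reading off the entries of \eqref{Mn} then gives $\|M_n\|\lesssim|\boldsymbol{\alpha}_n|\lesssim|n|_{\max}$.

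The crux is the bound on $|\boldsymbol{u}_n(h)|$. Since $\hat h>\max f$, in the slab $\hat h<x_3<h$ every Fourier mode of $\boldsymbol{u}$ solves the Navier equation and, by the bounded outgoing condition, $\boldsymbol{u}_n(x_3)=\boldsymbol{a}_n e^{{\rm i}\beta_{1n}(x_3-\hat h)}+\boldsymbol{b}_n e^{{\rm i}\beta_{2n}(x_3-\hat h)}$, where the compressional part is $\boldsymbol{a}_n=\lambda_n\boldsymbol{\zeta}_{1,n}$ with $\boldsymbol{\zeta}_{1,n}=(\alpha_{1n},\alpha_{2n},\beta_{1n})^\top$, and the shear part, being a curl of a $\boldsymbol{\psi}$-mode with wave vector $(\boldsymbol{\alpha}_n,\beta_{2n})$, obeys $\boldsymbol{\zeta}_{2,n}\cdot\boldsymbol{b}_n=0$ with $\boldsymbol{\zeta}_{2,n}=(\alpha_{1n},\alpha_{2n},\beta_{2n})^\top$. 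Pairing $\boldsymbol{u}_n(\hat h)=\boldsymbol{a}_n+\boldsymbol{b}_n$ with $\boldsymbol{\zeta}_{2,n}$ gives $\lambda_n\chi_n=\boldsymbol{\zeta}_{2,n}\cdot\boldsymbol{u}_n(\hat h)$ because $\boldsymbol{\zeta}_{2,n}\cdot\boldsymbol{\zeta}_{1,n}=\chi_n$; since $\chi_n$ is bounded away from zero and $|\boldsymbol{\zeta}_{j,n}|\lesssim|\boldsymbol{\alpha}_n|$, this recovers $|\boldsymbol{a}_n|+|\boldsymbol{b}_n|\lesssim|\boldsymbol{\alpha}_n|^{2}|\boldsymbol{u}_n(\hat h)|$. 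Using $|\beta_{2n}|\le|\beta_{1n}|$ (valid since $\kappa_1<\kappa_2$) and $|e^{{\rm i}\beta_{jn}(h-\hat h)}|=e^{-|\beta_{jn}|(h-\hat h)}$ for the evanescent modes then yields, for $|n|_{\max}>N$ with $N$ large,
\[
|\boldsymbol{u}_n(h)|\lesssim|\boldsymbol{\alpha}_n|^{2}\,e^{-|\beta_{2n}|(h-\hat h)}\,|\boldsymbol{u}_n(\hat h)|.
\]
Equivalently, one may extract $\phi_n(\hat h),\psi_{jn}(\hat h)$ from $\boldsymbol{u}_n(\hat h)$ via \eqref{phi}--\eqref{psi3} and propagate them to $\Gamma_h$ through \eqref{Helmholtz_decay}.

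It then remains to assemble: I would insert the two bounds into the Fourier sum, apply Cauchy--Schwarz while splitting the weight $(1+|\boldsymbol{\alpha}_n|^2)^{\pm1/4}$ so that $\|\boldsymbol{v}\|_{\boldsymbol{H}^{1/2}(\Gamma_h)}$ and $\|\boldsymbol{u}\|_{\boldsymbol{H}^{1/2}(\Gamma_{\hat h})}$ appear, and factor out the supremum over $|n|_{\max}>N$ of the remaining mode symbol $|\boldsymbol{\alpha}_n|^{c}e^{-|\beta_{2n}|(h-\hat h)}$ for a fixed $c$. Lemma \ref{lemma1} — applied on $\Omega$ for $\Gamma_h$ and on the subdomain enclosed by $S$ and $\Gamma_{\hat h}$ for $\Gamma_{\hat h}$ — together with $\|\boldsymbol{u}\|_{\boldsymbol{H}^1(\Omega)}\lesssim\|\boldsymbol{u}^{\rm inc}\|_{\boldsymbol{H}^1(\Omega)}$ turns the two $H^{1/2}$ norms into $\|\boldsymbol{u}^{\rm inc}\|_{\boldsymbol{H}^1(\Omega)}\|\boldsymbol{v}\|_{\boldsymbol{H}^1(\Omega)}$; and for $|n|_{\max}>N$ one has $|\boldsymbol{\alpha}_n|\gtrsim N$ with $|\beta_{2n}|/|\boldsymbol{\alpha}_n|\to1$, so for $N$ large each surplus factor of $|\boldsymbol{\alpha}_n|$ is absorbed into a fixed fraction of the exponent and the supremum is controlled by $\max_{|n|_{\min}>N}\big(|n|_{\max}e^{-|\beta_{2n}|(h-\hat h)}\big)$. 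The main obstacle is the crux step: getting $|\boldsymbol{u}_n(h)|\lesssim(\text{polynomial in }|\boldsymbol{\alpha}_n|)\,e^{-|\beta_{2n}|(h-\hat h)}\,|\boldsymbol{u}_n(\hat h)|$ \emph{uniformly} in $n$ — a priori the two constituent modes of $\boldsymbol{u}_n$ decay at different rates, so a near-cancellation between them could threaten to make $|\boldsymbol{u}_n(h)|$ large relative to $|\boldsymbol{u}_n(\hat h)|$; this is precluded by the uniform transversality $\boldsymbol{\zeta}_{1,n}\cdot\boldsymbol{\zeta}_{2,n}=\chi_n\not\to0$ of the compressional direction and the shear plane, and the polynomial prefactor it costs is harmless since $\hat h<h$ makes $e^{-|\beta_{2n}|(h-\hat h)}$ overwhelm any fixed power.
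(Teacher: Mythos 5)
Your overall architecture coincides with the paper's: pass to the Fourier side on $\Gamma_h$, bound the symbol by $\|M_n\|\lesssim|n|_{\max}$, establish a mode-wise decay estimate for $\boldsymbol{u}_n$ from $\Gamma_{\hat h}$ to $\Gamma_h$, and finish with Cauchy--Schwarz, the trace inequality of Lemma \ref{lemma1} and the stability bound. The crux step, however, is where your argument falls short of the estimate as stated. Bounding the compressional and shear constituents separately, $|\boldsymbol{u}_n(h)|\le|\boldsymbol{a}_n|e^{-|\beta_{1n}|(h-\hat h)}+|\boldsymbol{b}_n|e^{-|\beta_{2n}|(h-\hat h)}$, together with your (correct and rather elegant) transversality bound $|\boldsymbol{a}_n|+|\boldsymbol{b}_n|\lesssim|\boldsymbol{\alpha}_n|^2|\boldsymbol{u}_n(\hat h)|$, gives $|\boldsymbol{u}_n(h)|\lesssim|n|_{\max}^{2}e^{-|\beta_{2n}|(h-\hat h)}|\boldsymbol{u}_n(\hat h)|$ --- one power of $|n|_{\max}$ worse than the paper's \eqref{aymptoticP}--\eqref{decayU2}. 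Your proposed repair, absorbing the surplus power into a fixed fraction of the exponent, does not recover the lemma: it trades $|n|_{\max}^{2}e^{-|\beta_{2n}|(h-\hat h)}$ for $|n|_{\max}e^{-(1-\epsilon)|\beta_{2n}|(h-\hat h)}$, and the supremum of either quantity over the tail exceeds $\max_{|n|_{\min}>N}\big(|n|_{\max}e^{-|\beta_{2n}|(h-\hat h)}\big)$ by a factor that grows polynomially in $N$, since all these suprema are attained at $|n|\sim N$ where the extra power of $N$ is not absorbed. As written you therefore prove the estimate only with a larger (though still exponentially decaying) right-hand side.

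The missing ingredient is a cancellation between the two \emph{propagators}, not between the two amplitudes. Within your own framework the fix is one line: regroup
\[
\boldsymbol{u}_n(h)=\boldsymbol{u}_n(\hat h)\,e^{{\rm i}\beta_{2n}(h-\hat h)}+\boldsymbol{a}_n\big(e^{{\rm i}\beta_{1n}(h-\hat h)}-e^{{\rm i}\beta_{2n}(h-\hat h)}\big),
\]
and estimate $\big|e^{-|\beta_{1n}|(h-\hat h)}-e^{-|\beta_{2n}|(h-\hat h)}\big|\le (h-\hat h)\big(|\beta_{1n}|-|\beta_{2n}|\big)e^{-|\beta_{2n}|(h-\hat h)}\lesssim |n|_{\max}^{-1}e^{-|\beta_{2n}|(h-\hat h)}$, which is exactly the mean-value estimate \eqref{MVT}. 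Combined with $|\boldsymbol{a}_n|\lesssim|\boldsymbol{\alpha}_n|^{2}|\boldsymbol{u}_n(\hat h)|$ this yields the sharp $|\boldsymbol{u}_n(h)|\lesssim|n|_{\max}e^{-|\beta_{2n}|(h-\hat h)}|\boldsymbol{u}_n(\hat h)|$. This is precisely the cancellation the paper extracts by computing the transfer matrix $P_n=\chi_n^{-1}(A_nB_n)|_{3\times 3}$ entrywise, where each entry appears as an $O(|\boldsymbol{\alpha}_n|^2)$ multiple of $e^{{\rm i}\beta_{1n}(h-\hat h)}-e^{{\rm i}\beta_{2n}(h-\hat h)}$ plus $\chi_n e^{{\rm i}\beta_{2n}(h-\hat h)}$; your transversality pairing is a cleaner route to the amplitude bound than the paper's explicit inversion \eqref{phi}--\eqref{psi3}, but without the regrouping it cannot reach the stated rate. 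The remaining steps (the bound $\|M_n\|\lesssim|n|_{\max}$ via $\beta^{(n)}_{12}=O(|\boldsymbol{\alpha}_n|^{-1})$ and $\chi_n$ bounded away from zero, Cauchy--Schwarz with the $H^{1/2}$ weights, the trace inequality on $\Gamma_h$ and on $\Gamma_{\hat h}$, and the a priori bound) are sound and match the paper.
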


\begin{proof}
It follows from \eqref{Helmholtz_decay} that we have
\begin{equation}\label{Helmholtz_decay2}
\phi_n(h)=\phi_n(\hat h)e^{{\rm
i}\beta_{1 n}(h-\hat h)},\quad
\psi_{j n}(h)=\psi_{j n}(\hat h)e^{{\rm
i}\beta_{2 n}(h-\hat h)}, \quad j=1,2,3.
\end{equation}
Substituting \eqref{Helmholtz_decay2} into \eqref{u1}, we obtain the
Fourier coefficients of $\boldsymbol{u}$ at $x_3=h$ in terms of the Fourier
coefficients of $\phi$ and $\boldsymbol\psi$ at $x_3=\hat h$:
\begin{eqnarray}
&&\begin{bmatrix}u_{1 n}(h)\\
u_{2 n}(h)\\  u_{3 n}(h) \\0 \end{bmatrix}=
{\rm i}\begin{bmatrix}
\alpha_{1 n} & 0 & -\beta_{2 n} &
\alpha_{2 n}\\
\alpha_{2 n} & \beta_{2 n} & 0 &
-\alpha_{1 n}\\
\beta_{1 n} & -\alpha_{2 n} &
\alpha_{1 n} & 0\\
0 & \alpha_{1 n} & \alpha_{2 n} &
\beta_{2 n}
\end{bmatrix}\begin{bmatrix}\phi_n(h) \\
\psi_{1 n}(h)\\
\psi_{2 n}(h)  \\
\psi_{3 n}(h)\end{bmatrix} \notag\\
&&={\rm i}\begin{bmatrix}
\alpha_{1 n} & 0 & -\beta_{2 n} &
\alpha_{2 n}\\
\alpha_{2 n} & \beta_{2 n} & 0 &
-\alpha_{1 n}\\
\beta_{1 n} & -\alpha_{2 n} &
\alpha_{1 n} & 0\\
0 & \alpha_{1 n} & \alpha_{2 n} &
\beta_{2 n}
\end{bmatrix}
{\rm diag} \left(\begin{bmatrix}
e^{{\rm i}\beta_{1 n}(h-\hat h)}\\ e^{{\rm
i}\beta_{2 n}(h-\hat h)} \\
e^{{\rm i}\beta_{2 n}(h-\hat h)}\\e^{{\rm
i}\beta_{2 n}(h-\hat h)}
\end{bmatrix}\right)
\begin{bmatrix}\phi_n(\hat h) \\
\psi_{1 n}(\hat h)\\
\psi_{2 n}(\hat h) \\
\psi_{3 n}(\hat h)\end{bmatrix}  \notag \\
&& :={\rm i} A_n (\phi_n(\hat h),
\psi_{1 n}(\hat h), \psi_{2 n}(\hat h),
\psi_{3 n}(\hat h))^\top. \label{uphi}
\end{eqnarray}
Replacing $h$ by $\hat h$ in \eqref{phi}--\eqref{psi3}, we may equivalently
have the matrix form
\begin{equation}\label{phiu}
\begin{bmatrix}
\phi_n(\hat h) \\
\psi_{1 n}(\hat h)\\
\psi_{2 n}(\hat h)  \\
\psi_{3 n}(\hat h)\end{bmatrix}
=-\frac{\rm i}\chi_n B_n
\begin{bmatrix}u_{1 n}(\hat h)\\
u_{2 n}(\hat h)\\  u_{3 n}(\hat h)\end{bmatrix},
\end{equation}
where the entries of the $4\times 3$ matrix $B_n$ are
\begin{eqnarray*}
&&B^{(n)}_{11}=	\alpha_{1 n},\quad
B^{(n)}_{12}=\alpha_{2 n}, \quad B^{(n)}_{13}=\beta_{2 n},\quad B^{(n)}_{23}=
-\alpha_{2 n},\quad B^{(n)}_{33}=\alpha_{1 n},\quad B^{(n)}_{43}=0,\\
&&B^{(\boldsymbol{n})}_{21}=-B^{(n)}_{32}=\frac{1}{\kappa_2^2}\alpha_{1 n}
\alpha_{2 n}\left(\beta_{1 n} -\beta_{2 n}\right), \quad 
B^{(n)}_{41}=\frac{1}{\kappa_2^2}\alpha_{2 n}\chi_n, \quad
B^{(n)}_{42}=-\frac{1}{\kappa_2^2}\alpha_{1 n} \chi_n,\\
&& B^{(n)}_{22}=\frac{1}{\kappa_2^2}\left(\alpha_{1 n}^2\beta_{2 n}+\alpha_{2
n}^2 \beta_{1 n}+\beta_{1 n}\beta_{2 n}^2\right),\quad 
B^{(n)}_{31}=-\frac{1}{\kappa_2^2}\left(\alpha_{1 n}^2\beta_{1 n}+\alpha_{2 n}^2
\beta_{2 n}+\beta_{1 n} \beta_{2 n}^2\right). 
\end{eqnarray*}
Plugging \eqref{uphi} into \eqref{phiu} yields 
\begin{eqnarray}\label{decayU}
\begin{bmatrix}u_{1 n}(h)\\
u_{2 n}(h)\\  u_{3 n}(h)\end{bmatrix}
=\frac{1}{\chi_n}(A_n B_n)|_{3\times 3}  
\begin{bmatrix}u_{1 n}(\hat h)\\
u_{2 n}(\hat h)\\ 
u_{3 n}(\hat h)\end{bmatrix}
:=\frac{1}{\chi_n}P_n
\begin{bmatrix}u_{1 n}(\hat h)\\ 
u_{2 n}(\hat h)\\  u_{3 n}(\hat h)\end{bmatrix},
\end{eqnarray}
where $(A_n B_n)|_{3\times 3}$ is the leading principal submatrix of order 3 of
the matrix $A_n B_n$. A straight forward computation yields that
\begin{eqnarray*}
P_{11}^{(n)}&=&\alpha_{1n}^2 e^{{\rm i}\beta_{1 n}(h-\hat h)}
+\frac{1}{\kappa_2^2}\alpha_{2 n}^2 \chi_n e^{{\rm i}\beta_{2 n}(h-\hat h)}\\
&&+\frac{1}{\kappa_2^2}\left( \alpha_{1 n}^2\beta_{1 n}
+\alpha_{2 n}^2\beta_{2 n}+\beta_{1 n}\beta_{2 n}^2\right)
\beta_{2 n} e^{{\rm i}\beta_{2 n}(h-\hat h)},\\
P_{12}^{(n)}&=&\alpha_{1 n}\alpha_{2 n}e^{{\rm
i}\beta_{1 n}(h-\hat h)} +\frac{1}{\kappa_2^2}\alpha_{1 n}
\alpha_{2 n}\left(\beta_{1 n}-\beta_{2 n}\right)\beta_{2 n}
e^{{\rm i}\beta_{2 n}(h-\hat h)}\\
&&-\frac{1}{\kappa_2^2}\alpha_{1 n} \alpha_{2 n}\chi_n
e^{{\rm i}\beta_{2 n}(h-\hat h)},\\
P_{13}^{(n)}&=&\alpha_{1 n}\beta_{2 n}\big(e^{{\rm
i}\beta_{1 n}(h-\hat h)}-e^{{\rm
i}\beta_{2 n}(h-\hat h)}\big),
\end{eqnarray*}

\begin{eqnarray*}
P_{21}^{(n)}&=&\alpha_{1 n}\alpha_{2 n}e^{{\rm
i}\beta_{1 n}(h-\hat h)} +\frac{1}{\kappa_2^2}\alpha_{1 n}
\alpha_{2 n}\left(\beta_{1 n}-\beta_{2 n}
\right)\beta_{2 n} e^{{\rm i}\beta_{2 n}(h-\hat h)}\\
&&-\frac{1}{\kappa_2^2}\alpha_{1 n} \alpha_{2 n}\chi_n
e^{{\rm i}\beta_{2 n}(h-\hat h)},\\
P_{22}^{(n)}&=&\alpha_{2 n}^2 e^{{\rm i}\beta_{1 n}(h-\hat h)}
+\frac{1}{\kappa_2^2}\alpha_{1 n}^2 \chi_n e^{{\rm i}\beta_{2 n}(h-\hat h)}\\
&&+\frac{1}{\kappa_2^2}\left(\alpha_{1 n}^2\beta_{2 n}
+\alpha_{2 n}^2\beta_{1 n}+\beta_{1 n}\beta_{2 n}^2\right)
\beta_{2 n} e^{{\rm i}\beta_{2 n}(h-\hat h)},\\
P_{23}^{(n)}&=&\alpha_{2 n}\beta_{2 n}\big(e^{{\rm
i}\beta_{1 n}(h-\hat h)}-e^{{\rm i}\beta_{2 n}(h-\hat h)}\big),
\end{eqnarray*}

\begin{eqnarray*}
P_{31}^{(n)}&=&\alpha_{1 n}\beta_{1 n}e^{{\rm
i}\beta_{1 n}(h-\hat h)} -\frac{1}{\kappa_2^2}\alpha_{1 n}\alpha_{2
n}^2\left(\beta_{1 n}-\beta_{2 n}\right)e^{{\rm i}\beta_{2 n}(h-\hat h)}\\
&&-\frac{1}{\kappa_2^2}\left(\alpha_{1 n}^2\beta_{1 n}
+\alpha_{2 n}^2\beta_{2 n}+\beta_{1 n}\beta_{2 n}^2\right)
\alpha_{1 n} e^{{\rm i}\beta_{2 n}(h-\hat h)},\\
P_{32}^{(n)}&=&\alpha_{2 n}\beta_{1 n}e^{{\rm
i}\beta_{1 n}(h-\hat h)}
-\frac{1}{\kappa_2^2}\alpha_{2 n}\alpha_{1 n}^2\left(\beta_{1 n}-\beta_{2
n}\right) e^{{\rm i}\beta_{2 n}(h-\hat h)}\\
&&-\frac{1}{\kappa_2^2}\left(\alpha_{1 n}^2\beta_{2 n}
+\alpha_{2 n}^2\beta_{1 n}+\beta_{1 n}\beta_{2 n}^2\right)\alpha_{2 n}
e^{{\rm i}\beta_{2 n}(h-\hat h)},\\
P_{33}^{(n)}&=&\beta_{1 n}\beta_{2 n}
e^{{\rm i}\beta_{1 n}(h-\hat h)}
+(\alpha_{1 n}^2 + \alpha_{2 n}^2) e^{{\rm i}\beta_{2 n}(h-\hat h)}.
\end{eqnarray*}

When
$|\boldsymbol{\alpha}_n|^2=\alpha_{1 n}^2+\alpha_{2
n}^2>\kappa_2^2$, it follows from \eqref{beta} that both $\beta_{1 n}$ and 
$\beta_{2 n}$ are pure imaginary numbers. We may easily show 
\begin{eqnarray}
\frac{\kappa_2^2}{2}<\chi_n =
\left|\boldsymbol{\alpha}_n\right|^2
-\left(|\boldsymbol{\alpha}_n|^2-\kappa_1^2\right)^{
1/2}\left(|\boldsymbol{\alpha}_n|^2-\kappa_2^2\right)^{1/2}<
\kappa_1^2+\kappa_2^2. \label{chi}
\end{eqnarray}
and
\begin{eqnarray}
{\rm i}(\beta_{2n}-\beta_{1n})=
\left(|\boldsymbol{\alpha}_n|^2-\kappa_1^2\right)^
{1/2}-\left(|\boldsymbol{\alpha}_n|^2-\kappa_2^2\right)^{1/2}
<\frac{\kappa_2^2-\kappa_1^2}{2(|\boldsymbol{\alpha}_n|^2-\kappa_2^2)^{1/2}}
.\label{MVT}
\end{eqnarray}
Plugging \eqref{beta} and \eqref{chi}--\eqref{MVT} into
$P_n$, we obtain 
\begin{eqnarray*}
P_{11}^{(n)}
&=&\alpha_{1 n}^2 e^{{\rm i}\beta_{1 n}(h-\hat h)}
+\frac{1}{\kappa_2^2}e^{{\rm i}\beta_{2 n}(h-\hat h)}
\Big\{|\boldsymbol{\alpha}_n|^2\beta_{1 n}\beta_{2 n}
-\alpha_{2 n}^2 (|\boldsymbol{\alpha}_n|^2-\kappa_2^2 )\\
&&-\beta_{1 n}\beta_{2 n} (|\boldsymbol{\alpha}_n|^2-\kappa_2^2 )
+\alpha_{2 n}^2|\boldsymbol{\alpha}_n|^2\Big\}\\
&=&\alpha_{1 n}^2 e^{{\rm i}\beta_{1 n}(h-\hat h)}
+e^{{\rm i}\beta_{2 n}(h-\hat h)} (\alpha_{2 n}^2
+\beta_{1 n}\beta_{2 n})\\
&=&\alpha_{1 n}^2\big(e^{{\rm i}\beta_{1 n}(h-\hat h)}
-e^{{\rm i}\beta_{2 n}(h-\hat h)}\big)
+e^{{\rm i}\beta_{2 n}(h-\hat h)}\chi_n,
\end{eqnarray*}
which gives 
\begin{eqnarray*}
|P_{11}^{(n)}|\lesssim |n|_{\max}e^{-|\beta_{2 n}|(h-\hat h)}.
\end{eqnarray*}
Similarly, we may show that all the entries of the matrix
$P_n$ have the estimates
\begin{equation}\label{aymptoticP}
|P_{ij}^{(n)}|\lesssim
|n|_{\max}e^{-|\beta_{2 n}|(h-\hat h)},\quad i,j=1,2,3.
\end{equation}
Substituting \eqref{chi} and \eqref{aymptoticP} into \eqref{decayU} gives
\begin{equation}\label{decayU2}
|\boldsymbol u_n(h)|^2\lesssim |n|^2_{\max} e^{-2|\beta_{2n}|(h-\hat h)}
|\boldsymbol u_n(\hat h)|^2.
\end{equation}

By \eqref{Mn}, it can be verified
from $\left|\boldsymbol{\alpha}_n\right|^2>\kappa_2^2$ that 
\begin{eqnarray}\label{aymM}
|M_{11}^{(n)}|&=&\left|\alpha_{1n}^2\left(\beta_{1 n}
-\beta_{2 n}\right)+\beta_{2 n}\chi_n\right| \notag\\
&=& \left|\alpha_{1 n}^2{\rm
i}\big( (|\boldsymbol{\alpha}_n|^2-\kappa_1^2)^{1/2}-
 (|\boldsymbol{\alpha}_n|^2-\kappa_2^2)^{1/2}\big)
+\beta_{2 n}\chi_n\right|\notag\\
&=& \left| |\alpha_{1n}|^2\frac{{\rm
i}(\kappa_2^2-\kappa_1^2)}{ (|\boldsymbol{\alpha}_n|^2
-\kappa_1^2)^{1/2}+(|\boldsymbol{\alpha}_n|^2-\kappa_2^2)^{1/2}}
+\beta_{2 n}\chi_n\right|\notag\\
&\lesssim& |n|_{\max}.
\end{eqnarray}
Following the same argument, we may show that 
\[
|M_{ij}^{(n)}|\lesssim |n|_{\max},\quad i,j=1,2,3.
\]

Substituting \eqref{decayU2}--\eqref{aymM} into \eqref{DtN}, we obtain
\begin{eqnarray*}
&&
\left|\int_{\Gamma_{h}}\left(T-T_N\right)\boldsymbol{u}
\cdot\overline{\boldsymbol{v}}\,{\rm d}s\right|
\leq\bigg|\Lambda_1\Lambda_2
\sum\limits_{|n|_{\min}>N}\left(M_n\boldsymbol{u}_n(h)\right)
\cdot\overline{\boldsymbol{v}(h)}\bigg|\\
&&\lesssim \Bigg(\sum\limits_{|n|_{\min}>N}
|n|_{\max}\left|\boldsymbol u_n(h)\right|^2\Bigg)^{1/2}
\Bigg(\sum\limits_{|n|_{\min}>N}
|n|_{\max}\left|\boldsymbol v_n(h)\right|^2\Bigg)^{1/2}\\
&&\lesssim \Bigg(\sum\limits_{|n|_{\min}>N}
|n|_{\max}^3 e^{-2|\beta_{2 n}|(h-\hat h)}
|\boldsymbol u_n(\hat
h)|^2\Bigg)^{1/2}\|\boldsymbol{v}\|_{\boldsymbol{H}^{1/2}(\Gamma_{h})}\\
&&\lesssim\max\limits_{|n|_{\min}>N}\left(|n|_{\max}e^{-|
\beta_{2 n}|(h-\hat h)}\right)
\|\boldsymbol{u}\|_{\boldsymbol{H}^{1}(\Omega)}\|\boldsymbol{v}\|_{
\boldsymbol{H}^{1}(\Omega)}\\
&&\lesssim \max\limits_{|n|_{\min}>N}\left(|n|_{\max}e^{-|\beta_{2 n}|
(h-\hat h)}\right)
\|\boldsymbol{u}^{\rm
inc}\|_{\boldsymbol{H}^{1}(\Omega)}\|\boldsymbol{v}\|_{\boldsymbol{H}^{1}
(\Omega)},
\end{eqnarray*}
which completes the proof. 
\end{proof}

\begin{lemma}\label{lemma3}
Let $\boldsymbol{v}$ be any function in $\boldsymbol{H}_{S, 
{\rm qp}}^{1}(\Omega)$, the following estimate holds:
\begin{eqnarray*}
\left|a(\boldsymbol{\xi}, \boldsymbol{v})+\int_{\Gamma_{h}}\left(
T-T_N\right)\boldsymbol{\xi}\cdot\overline{
\boldsymbol{v}}\,{\rm d}s\right|
&\lesssim& \Bigg(\bigg(\sum\limits_{K\in \mathcal M_h}\eta_K^2\bigg)^{1/2}\\
&&+\max\limits_{|n|_{\min}>N}\left(|n|_{\max}e^{-
|\beta_{2n}|(h-\hat h)}\right)\|\boldsymbol{u}^{\rm
inc}\|_{\boldsymbol{H}^{1}(\Omega)}\Bigg)\|\boldsymbol{v}\|_{\boldsymbol{H}^{1}
(\Omega)}.
\end{eqnarray*}
\end{lemma}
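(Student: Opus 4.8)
The plan is to re-express the quantity inside the norm in terms of the truncated sesquilinear form and then exploit Galerkin orthogonality. Directly from the definitions of $a$ and $a_N$ one has, for all $\boldsymbol{w},\boldsymbol{v}$,
\[
a(\boldsymbol{w},\boldsymbol{v})+\int_{\Gamma_h}(T-T_N)\boldsymbol{w}\cdot\overline{\boldsymbol{v}}\,{\rm d}s=a_N(\boldsymbol{w},\boldsymbol{v}),
\]
so the left-hand side of the lemma equals $a_N(\boldsymbol{\xi},\boldsymbol{v})=a_N(\boldsymbol{u},\boldsymbol{v})-a_N(\boldsymbol{u}_N^h,\boldsymbol{v})$. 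Since $\boldsymbol{v}\in\boldsymbol{H}^1_{S,{\rm qp}}(\Omega)$, the exact weak equation \eqref{variation1} gives $a(\boldsymbol{u},\boldsymbol{v})=0$, hence $a_N(\boldsymbol{u},\boldsymbol{v})=\int_{\Gamma_h}(T-T_N)\boldsymbol{u}\cdot\overline{\boldsymbol{v}}\,{\rm d}s$, which is already controlled by Lemma \ref{lemma2} together with the a priori bound $\|\boldsymbol{u}\|_{\boldsymbol{H}^1(\Omega)}\lesssim\|\boldsymbol{u}^{\rm inc}\|_{\boldsymbol{H}^1(\Omega)}$. It remains to estimate $a_N(\boldsymbol{u}_N^h,\boldsymbol{v})$; by the discrete equation \eqref{variation2}, $a_N(\boldsymbol{u}_N^h,\boldsymbol{v}^h)=0$ for every $\boldsymbol{v}^h\in V_{h,S}$, so $a_N(\boldsymbol{u}_N^h,\boldsymbol{v})=a_N(\boldsymbol{u}_N^h,\boldsymbol{v}-\boldsymbol{v}^h)$ for any such $\boldsymbol{v}^h$.

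Next I would take $\boldsymbol{v}^h$ to be a Scott--Zhang type quasi-interpolant of $\boldsymbol{v}$ in $V_{h,S}$; because the mesh is periodic in $x_1,x_2$ and $S$ is polygonal, this interpolant can be chosen to vanish on $S$, to inherit the quasi-periodic conditions on the lateral faces, and to satisfy the local bounds $\|\boldsymbol{v}-\boldsymbol{v}^h\|_{\boldsymbol{L}^2(K)}\lesssim h_K\|\boldsymbol{v}\|_{\boldsymbol{H}^1(\widetilde K)}$ and $\|\boldsymbol{v}-\boldsymbol{v}^h\|_{\boldsymbol{L}^2(F)}\lesssim h_K^{1/2}\|\boldsymbol{v}\|_{\boldsymbol{H}^1(\widetilde K)}$ over element patches $\widetilde K$ of bounded overlap. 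Writing $\boldsymbol{w}=\boldsymbol{v}-\boldsymbol{v}^h$ and integrating by parts element by element in $a_N(\boldsymbol{u}_N^h,\boldsymbol{w})$, the volume contributions assemble into $-\sum_K\int_K R_K\boldsymbol{u}_N^h\cdot\overline{\boldsymbol{w}}$; the interior face contributions combine into the jumps $J_F\boldsymbol{u}_N^h$; the $\Gamma_h$ contribution produces $D\boldsymbol{u}_N^h-T_N\boldsymbol{u}_N^h$, i.e.\ $-\tfrac12 J_F\boldsymbol{u}_N^h$; the contributions on $S$ vanish since $\boldsymbol{w}=0$ there; and the contributions on the paired lateral faces $(\Gamma_{10},\Gamma_{11})$ and $(\Gamma_{20},\Gamma_{21})$ recombine, via the quasi-periodic identities $e^{{\rm i}\alpha_1\Lambda_1}\boldsymbol{w}(0,\cdot)=\boldsymbol{w}(\Lambda_1,\cdot)$ and $e^{{\rm i}\alpha_2\Lambda_2}\boldsymbol{w}(\cdot,0,\cdot)=\boldsymbol{w}(\cdot,\Lambda_2,\cdot)$, into the jump residuals $J_F^{(1)}\boldsymbol{u}_N^h$ and $J_F^{(2)}\boldsymbol{u}_N^h$.

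I would then apply the Cauchy--Schwarz inequality term by term, insert the interpolation estimates above, use the finite overlap of the patches, and recognize the element quantities $h_K^2\|R_K\boldsymbol{u}_N^h\|_{\boldsymbol{L}^2(K)}^2$ and $h_K\|J_F^{(\cdot)}\boldsymbol{u}_N^h\|_{\boldsymbol{L}^2(F)}^2$ that make up $\eta_K^2$, obtaining $|a_N(\boldsymbol{u}_N^h,\boldsymbol{v}-\boldsymbol{v}^h)|\lesssim(\sum_{K\in\mathcal M_h}\eta_K^2)^{1/2}\|\boldsymbol{v}\|_{\boldsymbol{H}^1(\Omega)}$. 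Adding the bound on $a_N(\boldsymbol{u},\boldsymbol{v})$ furnished by Lemma \ref{lemma2} then yields the asserted estimate.

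The main obstacle is the lateral-boundary bookkeeping: one must check carefully that the element boundary integrals over the periodic-paired faces recombine—through the quasi-periodicity of $\boldsymbol{w}$—into exactly the residuals $J_F^{(1)}$ and $J_F^{(2)}$ appearing in $\eta_K$, and, relatedly, construct a quasi-periodic Scott--Zhang interpolant that simultaneously vanishes on $S$ and attains the stated local approximation bounds. Once those structural points are in place, the remainder is the standard residual-estimator computation.
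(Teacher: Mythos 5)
Your argument is correct and is exactly the route the paper intends: the paper omits this proof, citing \cite[Lemma 5.4]{LY-2019-periodic}, whose argument is precisely your identity $a(\cdot,\cdot)+\int_{\Gamma_h}(T-T_N)(\cdot)\cdot\overline{(\cdot)}\,{\rm d}s=a_N(\cdot,\cdot)$, the splitting $a_N(\boldsymbol{\xi},\boldsymbol{v})=a_N(\boldsymbol{u},\boldsymbol{v})-a_N(\boldsymbol{u}_N^h,\boldsymbol{v})$ with Lemma \ref{lemma2} and the a priori bound handling the first piece, and Galerkin orthogonality plus a quasi-periodic Scott--Zhang interpolant and elementwise integration by parts handling the second. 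The only remark worth adding is cosmetic: your computation correctly generates the interior-face and $\Gamma_h$ jump residuals $J_F\boldsymbol{u}_N^h$ in addition to $J_F^{(1)}$ and $J_F^{(2)}$, whereas the paper's displayed formula for $\eta_K$ lists only the lateral-boundary jumps (and writes $R_K\boldsymbol{u}$ for $R_K\boldsymbol{u}_N^h$), which appears to be a typographical omission rather than a discrepancy with your proof.
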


Since the proof of Lemma \ref{lemma3} is essentially the same as that for
\cite[Lemma 5.4]{LY-2019-periodic}, we omit it for brevity. The following two
lemmas are to estimate the last term in \eqref{Error}.

\begin{lemma}\label{lemma4}
Let $\hat{M}_n=-\frac{1}{2}\left(M_n+M_n^*\right)$. Then
$\hat{M}_n$ is positive definite for
$\left|\boldsymbol{\alpha}_n\right|>\kappa_2$.
\end{lemma}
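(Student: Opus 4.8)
The plan is to reduce the claim, via an explicit substitution of the purely imaginary $\beta_{jn}$, to the positive definiteness of a real symmetric matrix that splits as a positive multiple of the identity plus a rank-one positive semidefinite term.

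First I would use the standing hypothesis $\mu>0,\ \lambda+\mu>0$, which gives $\lambda+2\mu>\mu$ and hence $\kappa_1<\kappa_2$. Thus $|\boldsymbol{\alpha}_n|>\kappa_2$ forces $|\boldsymbol{\alpha}_n|>\kappa_1$ as well, so by \eqref{beta} both $\beta_{1n}$ and $\beta_{2n}$ are purely imaginary; write $\beta_{jn}={\rm i}\gamma_{jn}$ with $\gamma_{jn}=(|\boldsymbol{\alpha}_n|^2-\kappa_j^2)^{1/2}>0$. Since $\gamma_{1n}^2-\gamma_{2n}^2=\kappa_2^2-\kappa_1^2>0$ we get $a:=\gamma_{1n}-\gamma_{2n}>0$, and $\chi_n>0$ by \eqref{chi}. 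Substituting $\beta_{jn}={\rm i}\gamma_{jn}$ into \eqref{Mn}, every entry of $M_n$ simplifies: the diagonal entries and the $(1,2),(2,1)$ entries become real, while the $(1,3),(3,1),(2,3),(3,2)$ entries become purely imaginary with the skew pattern $M_{31}^{(n)}=\overline{M_{13}^{(n)}}$, $M_{32}^{(n)}=\overline{M_{23}^{(n)}}$, so $M_n$ is Hermitian in this regime. Consequently $\hat{M}_n=-\tfrac12(M_n+M_n^*)=-M_n$, and it remains to show $-M_n$ is positive definite. (Even without first invoking Hermicity, forming the Hermitian part $-\tfrac12(M_n+M_n^*)$ directly produces the same matrix used below.)

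Next I would write $-M_n=\dfrac{\mu}{\chi_n}\widetilde N_n$, where $\widetilde N_n$ is the explicit matrix from the substitution, and conjugate by the unitary diagonal matrix $D={\rm diag}(1,1,{\rm i})$, which leaves the spectrum unchanged: $\widehat N_n:=D^{*}\widetilde N_n D$ is real symmetric. The crucial algebraic observation is the identity
\[
\kappa_2^2-(\gamma_{1n}-\gamma_{2n})\gamma_{2n}=|\boldsymbol{\alpha}_n|^2-\gamma_{1n}\gamma_{2n}=\chi_n,
\]
which is immediate from $\gamma_{2n}^2=|\boldsymbol{\alpha}_n|^2-\kappa_2^2$ and $\chi_n=|\boldsymbol{\alpha}_n|^2+\beta_{1n}\beta_{2n}$. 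Rewriting the $(3,3)$ entry with this identity, $\widehat N_n$ collapses to
\[
\widehat N_n=a\,\boldsymbol{b}\boldsymbol{b}^{\top}+\gamma_{2n}\chi_n\,I,\qquad \boldsymbol{b}=(\alpha_{1n},\alpha_{2n},-\gamma_{2n})^{\top}.
\]

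Finally, since $a>0$ the rank-one term $a\,\boldsymbol{b}\boldsymbol{b}^{\top}$ is positive semidefinite, and since $\gamma_{2n}>0$ and $\chi_n>0$ the term $\gamma_{2n}\chi_n I$ is positive definite; hence $\widehat N_n$—and therefore $\widetilde N_n$ and $\hat{M}_n=\dfrac{\mu}{\chi_n}\widetilde N_n$—is positive definite. (Alternatively, the same identity yields $\det\widehat N_n=\gamma_{2n}^2\chi_n^2\big(a|\boldsymbol{\alpha}_n|^2+\kappa_2^2\gamma_{2n}\big)>0$, which together with the evidently positive leading $1\times1$ and $2\times2$ minors gives positive definiteness via Sylvester's criterion.) The main obstacle is purely the algebraic bookkeeping: tracking the factors of ${\rm i}$ when substituting $\beta_{jn}={\rm i}\gamma_{jn}$ into \eqref{Mn}, and—more essentially—spotting the identity $\kappa_2^2-(\gamma_{1n}-\gamma_{2n})\gamma_{2n}=\chi_n$, without which the structure of $\widehat N_n$ (and the sign of its determinant) is not transparent.
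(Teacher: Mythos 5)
Your proof is correct, and it takes a genuinely different route from the paper's. The paper likewise observes that $\hat M_n=-M_n$ in this regime and then verifies Sylvester's criterion by direct computation: the order-one minor is massaged into a manifestly positive fraction involving $(|\boldsymbol{\alpha}_n|^2-\kappa_j^2)^{1/2}$, the order-two minor is reduced to $-\beta_{1n}\beta_{2n}\chi_n\kappa_2^2$ up to positive factors, and the full determinant is evaluated through the long chain \eqref{DetMn}. You instead strip out the factors of ${\rm i}$ by conjugating with $D=\mathrm{diag}(1,1,{\rm i})$ and exhibit the resulting real symmetric matrix as $a\,\boldsymbol b\boldsymbol b^{\top}+\gamma_{2n}\chi_n I$ with $\boldsymbol b=(\alpha_{1n},\alpha_{2n},-\gamma_{2n})^{\top}$; I checked this decomposition entrywise against \eqref{Mn} and it is exact, the only nontrivial step being your identity $\kappa_2^2-(\gamma_{1n}-\gamma_{2n})\gamma_{2n}=|\boldsymbol{\alpha}_n|^2-\gamma_{1n}\gamma_{2n}=\chi_n$, which absorbs the $(3,3)$ entry. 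Together with $a=\gamma_{1n}-\gamma_{2n}>0$ (from $\kappa_1<\kappa_2$, which indeed follows from $\mu>0$, $\lambda+\mu>0$), $\gamma_{2n}>0$ and $\chi_n>0$ from \eqref{chi}, positive definiteness is immediate, and your determinant formula $\gamma_{2n}^2\chi_n^2\big(a|\boldsymbol{\alpha}_n|^2+\kappa_2^2\gamma_{2n}\big)$ agrees with what \eqref{DetMn} evaluates to. Your argument is shorter, explains \emph{why} the matrix is positive definite (an isotropic part plus a rank-one correction), and gives the eigenvalues for free ($\gamma_{2n}\chi_n$ with multiplicity two and $\gamma_{2n}\kappa_2^2+a|\boldsymbol{\alpha}_n|^2$), whereas the paper's minor-by-minor computation offers no extra payoff here beyond reusing intermediate quantities that appear in its other estimates.
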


\begin{proof}
A simple calculation shows that $\hat{M}_n=-M_n$ for 
$\left|\boldsymbol{\alpha}_n\right|>\kappa_2$.
It suffices to check Sylvester's criterion in order to prove that 
$\hat{M}_n$ is positive definite. First, it is easy to see that 
\[
\chi_n=|\boldsymbol{\alpha}_n|^2+\beta_{1 n}\beta_{2 n}
=|\boldsymbol{\alpha}_n|^2-\left(|\boldsymbol{
\alpha}_n|^2-\kappa_1^2\right)^{1/2}
\left(|\boldsymbol{\alpha}_n|^2-\kappa_2^2\right)^{1/2}>0.
\] 

Let $\hat{M}_{11}^{(n)}$ be the leading principal submatrix of order 1 for
$\hat{M}_n$. A simple calculation yields 
\begin{eqnarray}\label{M11}
\hat{M}_{11}^{(n)} &=& -\frac{{\rm
i}\mu}{\chi_n}\left[\alpha_{1 n}^2\left(\beta_{1 n}-\beta_{2 n}\right)+\beta_{2
n}\chi_n\right] \notag \\
&=&-\frac{{\rm i}\mu}{\chi_n}\left[
\alpha_{1 n}^2\left(\beta_{1 n}-\beta_{2 n}\right)+\beta_{2 n}
\left(|\boldsymbol{\alpha}_n|^2+\beta_{1 n}\beta_{2 n}\right)\right]\notag \\
&=&-\frac{{\rm i}\mu}{\chi_n}\left[
\alpha_{1 n}^2\beta_{1 n}+\alpha_{2 n}^2\beta_{2 n}
-\beta_{1 n}\left(\alpha_{1 n}^2+ \alpha_{2
n}^2-\kappa_2^2\right)\right]\notag\\
&=&-\frac{{\rm i}\mu}{\chi_n}\left[
\left(\beta_{2 n}-\beta_{1 n}
\right)\alpha_{2 n}^2 +\beta_{1 n}\kappa_2^2\right].
\end{eqnarray}
By \eqref{beta}, 
\[
-{\rm
i}\left(\beta_{2 n}-\beta_{1 n}\right)=(
|\boldsymbol{\alpha}_n|^2-\kappa_2^2)^{1/2}
-(|\boldsymbol{\alpha}_n|^2-\kappa_1^2)^{1/2}=\frac{
\kappa_1^2-\kappa_2^2}{(
|\boldsymbol{\alpha}_n|^2-\kappa_2^2)^{1/2}
+(|\boldsymbol{\alpha}_n|^2-\kappa_1^2)^{1/2}}. 
\]
Substituting the above equation into \eqref{M11}, we get 
\begin{eqnarray*}
&&-{\rm i}\left[
\left(\beta_{2 n}-\beta_{1 n}\right)\alpha_{2 n}^2
+\beta_{1 n}\kappa_2^2\right]\\
&&=\frac{\alpha_{2 n}^2\left(\kappa_1^2-\kappa_2^2\right)}{
(|\boldsymbol{\alpha}_n|^2-\kappa_2^2)^{1/2}
+(|\boldsymbol{\alpha}_n|^2-\kappa_1^2)^{1/2}}
+\frac{\kappa_2^2 (|\boldsymbol{\alpha}_n|^2-\kappa_1^2)^{1/2}
\left( (|\boldsymbol{\alpha}_n|^2-\kappa_2^2)^{1/2}
+(|\boldsymbol{\alpha}_n|^2-\kappa_1^2)^{1/2}\right)}
{(|\boldsymbol{\alpha}_n|^2-\kappa_2^2)^{1/2}
+ (|\boldsymbol{\alpha}_n|^2-\kappa_1^2)^{1/2}}\\
&&=\frac{\alpha_{2 n}^2 (\kappa_1^2-\kappa_2^2)+\kappa_2^2
(|\boldsymbol{\alpha}_n|^2-\kappa_1^2)
+\kappa_2^2 (|\boldsymbol{\alpha}_n|^2-\kappa_2^2)^{1/2}
(|\boldsymbol{\alpha}_n|^2-\kappa_1^2)^{1/2}}{(
|\boldsymbol{\alpha}_n|^2-\kappa_2^2)^{1/2}
+ (|\boldsymbol{\alpha}_n|^2-\kappa_1^2)^{1/2}}\\
&&=\frac{\alpha_{2 n}^2\kappa_1^2+ \alpha_{1 n}^2\kappa_2^2
-\kappa_1^2\kappa_2^2+\kappa_2^2 (|\boldsymbol{\alpha}_n|^2-\kappa_2^2)^{1/2}
(|\boldsymbol{\alpha}_n|^2-\kappa_1^2)^{1/2}}{(|\boldsymbol{\alpha}
_n|^2-\kappa_2^2)^{1/2}+ (|\boldsymbol{\alpha}_n|^2-\kappa_1^2)^{1/2}}\\
&&=\frac{\kappa_1^2 (|\boldsymbol{\alpha}_n|^2-\kappa_2^2)
+\alpha_{1 n}^2\left(\kappa_2^2-\kappa_1^2\right)+\kappa_2^2
(|\boldsymbol{\alpha}_n|^2-\kappa_2^2)^{1/2}
(|\boldsymbol{\alpha}_n|^2-\kappa_1^2)^{1/2}}{
(|\boldsymbol{\alpha}_n|^2-\kappa_2^2)^{1/2}
+(|\boldsymbol{\alpha}_n|^2-\kappa_1^2)^{1/2}}
>0,
\end{eqnarray*}
which shows that $\hat{M}_{11}^{(n)}$ is positive. 

The determinant of the leading principal submatrix of order 2 for the
matrix $\hat{M}_n$ is
\begin{eqnarray*}
&&\left(-\frac{{\rm i}\mu}{\chi_n}\right)^2\Big\{
\left[ \alpha_{1 n}^2\left(\beta_{1
n}-\beta_{2 n}\right)+\beta_{2 n}\chi_n\right]
\left[\alpha_{2 n}^2\left(\beta_{1
n}-\beta_{2 n}\right)+\beta_{2 n}\chi_n\right]-\alpha_{1 n}^2 \alpha_{2
n}^2\left(\beta_{1 n}-\beta_{2 n}\right)^2\Big\}\\
&&=-\left(\frac{\mu}{\chi_n}\right)^2\left[|\boldsymbol{
\alpha}_n|^2\left(\beta_{1 n}-\beta_{2 n}\right)
\beta_{2 n}\chi_n+\beta_{2 n}^2 \chi_n^2\right].
\end{eqnarray*}
A simple calculation yields 
\begin{eqnarray*}
&&|\boldsymbol{\alpha}_n|^2\beta_{1n}\beta_{2n}\chi_n
-|\boldsymbol{\alpha}_n|^2\beta_{2n}^2\chi_n+\beta_{2n}^2\chi_n^2\\
&&=|\boldsymbol{\alpha}_n|^2\beta_{1n}\beta_{2n}\chi_n
+\beta_{2n}^2\chi_n\left(|\boldsymbol{\alpha}_n|^2
+\beta_{1n}\beta_{2n}-|\boldsymbol{\alpha}_n|^2\right)\\
&&=|\boldsymbol{\alpha}_n|^2\beta_{1n}\beta_{2n}\chi_n
+\beta_{2n}^2\chi_n\beta_{1n}\beta_{2n}\\
&&=\beta_{1n}\beta_{2n}\chi_n
\left(|\boldsymbol{\alpha}_n|^2
-|\boldsymbol{\alpha}_n|^2+\kappa_2^2\right)
=\beta_{1n}\beta_{2n}\chi_n\kappa_2^2<0.
\end{eqnarray*}
Hence
\[
-\left(\frac{\mu}{\chi_n}\right)^2\left[|\boldsymbol{
\alpha}_n|^2\left(\beta_{1n}-\beta_{2n}\right)\beta_{2n}\chi_n
+\beta_{2n}^2\chi_n^2\right]>0,
\]
which shows that the determinant of the leading principal submatrix of order 2
is also positive. 

It follows from a straightforward calculation that the determinant of matrix
$\hat{M}_n$ itself is
\begin{eqnarray}\label{DetMn}
&&\left(-\frac{{\rm i}\mu}{\chi_n}\right)^3\bigg\{
\big(\alpha_{1n}^2\beta_{12}^{(n)} +\beta_{2n}\chi_n\big)
\Big(\big(\alpha_{2n}^2\beta_{12}^{(n)}
+\beta_{2n}\chi_n\big) \kappa_2^2\beta_{2n}
+\alpha_{2n}^2\beta_{2n}^2 (\beta_{12}^{(n)})^2\Big)\notag \\
&&\quad-\alpha_{1n}\alpha_{2n}\beta_{12}^{(n)}\Big[
\alpha_{1n}\alpha_{2n}\beta_{12}^{(n)}\kappa_2^2\beta_{2n}
+\alpha_{1n}\alpha_{2n}\beta_{2n}^2 (\beta_{12}^{(n)})^2\Big]
+\alpha_{1n}\beta_{2n}(\beta_{12}^{(n)})^2\alpha_{1n} \beta_{2n}^2
\chi_n\bigg\}\notag\\
&=&\left(-\frac{{\rm i}\mu}{\chi_n}\right)^3\Big(
\alpha_{1n}^2\beta_{12}^{(n)}\beta_{2n}^2
\chi_n\kappa_2^2+\beta_{2n}^2\chi_n \alpha_{2n}^2
\beta_{12}^{(n)}\kappa_2^2+\beta_{2n}^3 \chi_n^2\kappa_2^2
+\beta_{2n}^3\chi_n\alpha_{2n}^2(\beta_{12}^{(n)})^2\notag\\
&&\quad+\alpha_{1n}^2\beta_{2n}^3 (\beta_{12}^{(n)})^2\chi_n \Big)\notag\\
&=&\left(-\frac{{\rm i}\mu}{\chi_n}\right)^3\beta_{2n}^2
\Big(\kappa_2^2|\boldsymbol{\alpha}_n|^2\chi_n
\beta_{12}^{(n)}+|\boldsymbol{\alpha}_n|^2
(\beta_{12}^{(n)})^2\beta_{2n}\chi_n
+\beta_{2n}\chi_n^2\kappa_2^2\Big)\notag\\
&=&\left(-\frac{{\rm i}\mu}{\chi_n}\right)^3\beta_{2n}^2
\chi_n\Big(\kappa_2^4\beta_{1n}+|\boldsymbol{\alpha}_n|^2\beta_{2n}
\beta_{1n}^2+|\boldsymbol{\alpha}_n|^2
\beta_{2n}^3-2|\boldsymbol{\alpha}_n|^2
\beta_{1n}\beta_{2n}^2\Big)\notag\\
&=&\left(-\frac{{\rm i}\mu}{\chi_n}\right)^3
\beta_{2n}^2\chi_n\Big(\kappa_2^4\beta_{1n}
-\kappa_2^2|\boldsymbol{\alpha}_n|^2\beta_{12}^{(n)}
+|\boldsymbol{\alpha}_n|^4\beta_{12}^{(n)}+|\boldsymbol{\alpha}
_n|^2\beta_{2n} \beta_{1n}^2-|\boldsymbol{\alpha}_n|^2\beta_{1n}
\beta_{2n}^2\Big)\notag\\
&=&\left(-\frac{{\rm i}\mu}{\chi_n}\right)^3\beta_{2n}^2\chi_n\Big(
\kappa_2^2|\boldsymbol{\alpha}_n|^2\beta_{2n}
+\kappa_2^2\beta_{1n}\beta_{2n}^2
+|\boldsymbol{\alpha}_n|^2\left(\beta_{1n}
-\beta_{2n}\right)\chi_n\Big)\notag\\
&=&\left(-\frac{{\rm i}\mu}{\chi_n}\right)^3
\beta_{2n}^2\chi_n\Big(
\kappa_2^2\beta_{2n}\chi_n+|\boldsymbol{
\alpha}_n|^2\left(\beta_{1n}-
\beta_{2n}\right)\chi_n\Big).
\end{eqnarray}
Recall $\beta_{2n}={\rm i}(|\boldsymbol{\alpha}_n|^2-\kappa_2^2)^{1/2}$. 
The first part of \eqref{DetMn} satisfies
\begin{eqnarray*}
\left(-\frac{{\rm i}\mu}{\chi_n}\right)^3\beta_{2n}^2
\chi_n\kappa_2^2\beta_{2n}\chi_n&=&-\frac{\mu^3}{\chi_n}\kappa_2^2
{\rm i}\big(|\boldsymbol{\alpha}_n|^2-\kappa_2^2\big){\rm
i} \big(|\boldsymbol{\alpha}_n|^2-\kappa_2^2\big)^{1/2} \\
&=&\frac{\mu^3}{\chi_n}\kappa_2^2
\big(|\boldsymbol{\alpha}_n|^2-\kappa_2^2\big)^{
3/2}>0.
\end{eqnarray*}
The second part of \eqref{DetMn} is 
\begin{eqnarray*}
&&\left(-\frac{{\rm i}\mu}{\chi_n}\right)^3 \beta_{2n}^2\chi_n
|\boldsymbol{\alpha}_n|^2\left(\beta_{1n}-\beta_{2n}\right)\chi_n
\\
&&=\frac{\mu^3}{\chi_n}|\boldsymbol{\alpha}_n|^2 (
|\boldsymbol{\alpha}_n|^2-\kappa_2^2)
\big( (|\boldsymbol{\alpha}_n|^2-\kappa_1^2)^{1/2}
-(|\boldsymbol{\alpha}_n|^2-\kappa_2^2)^{1/2}\big).
\end{eqnarray*}
Since $\kappa_1<\kappa_2$, we have 
$(|\boldsymbol{\alpha}_n|^2-\kappa_1^2)^{1/2}
-(|\boldsymbol{\alpha}_n|^2-\kappa_2^2)^{1/2}>0$. The proof is completed after
combining the above estimates.
\end{proof}

\begin{lemma}\label{lemma5}
Let $\Omega'=\big\{\boldsymbol{x}\in\mathbb{R}^3: (x_1,
x_2)\in(0, \Lambda_1)\times(0, \Lambda_2), \hat h<x_3<h\big\}$. Then for any
$\delta>0$, there exists a positive constant $C(\delta)$ independent of $N$ such
that
\[
\Re\int_{\Gamma_h}T_N\boldsymbol{\xi}\cdot\overline{
\boldsymbol{\xi}}\,{\rm d}s\leq
C(\delta)\|\boldsymbol{\xi}\|_{\boldsymbol{L}^2(\Omega')}
^2+\delta\|\boldsymbol{\xi}\|_{\boldsymbol{H}^1(\Omega')}^2.
\]
\end{lemma}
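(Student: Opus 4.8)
The plan is to diagonalize the boundary term via the Fourier series on $\Gamma_h$, discard the evanescent modes using the definiteness supplied by Lemma~\ref{lemma4}, and control the remaining, finitely many, propagating modes by a one-dimensional trace inequality over the slab $\Omega'$. The only delicate point is the independence of the constant from $N$, and this is delivered entirely by Lemma~\ref{lemma4}: the evanescent part of $T_N$ contributes with a favorable sign and can simply be dropped, so that the $N$-dependent portion of $T_N$ disappears from the upper bound and only a fixed finite set of propagating modes survives.

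First I would write $\boldsymbol{\xi}|_{\Gamma_h}=\sum_{n\in\mathbb Z^2}\boldsymbol{\xi}_n(h)e^{{\rm i}\boldsymbol{\alpha}_n\cdot\boldsymbol r}$ and use the $L^2(\Gamma_h)$-orthogonality of $\{e^{{\rm i}\boldsymbol{\alpha}_n\cdot\boldsymbol r}\}_{n\in\mathbb Z^2}$ together with \eqref{dDtN} to get
\[
\int_{\Gamma_h}T_N\boldsymbol{\xi}\cdot\overline{\boldsymbol{\xi}}\,{\rm d}s=\Lambda_1\Lambda_2\!\!\sum_{|n_1|,|n_2|\le N}\!\!\big(M_n\boldsymbol{\xi}_n(h)\big)\cdot\overline{\boldsymbol{\xi}_n(h)}.
\]
Taking the real part and recalling $\hat M_n=-\tfrac12(M_n+M_n^*)$ from Lemma~\ref{lemma4}, the right-hand side equals $-\Lambda_1\Lambda_2\sum_{|n_1|,|n_2|\le N}\big(\hat M_n\boldsymbol{\xi}_n(h)\big)\cdot\overline{\boldsymbol{\xi}_n(h)}$. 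By Lemma~\ref{lemma4}, $\hat M_n$ is positive definite whenever $|\boldsymbol{\alpha}_n|>\kappa_2$, so those summands are nonnegative and may be discarded in an upper bound; what is left is a sum over the set $\mathcal{L}=\{n\in\mathbb Z^2:|\boldsymbol{\alpha}_n|\le\kappa_2\}$, which is \emph{finite and independent of $N$}. For $n\in\mathcal{L}$ the entries of $M_n$ are finite (as $\beta_{1n},\beta_{2n}$ are bounded and $\chi_n\ne0$), so with the $N$-independent constant $C_0:=\Lambda_1\Lambda_2\max_{n\in\mathcal{L}}\|M_n\|$,
\[
\Re\int_{\Gamma_h}T_N\boldsymbol{\xi}\cdot\overline{\boldsymbol{\xi}}\,{\rm d}s\le C_0\sum_{n\in\mathcal{L}}|\boldsymbol{\xi}_n(h)|^2.
\]

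It then remains to bound each $|\boldsymbol{\xi}_n(h)|^2$ by the norms on $\Omega'$. Since $\hat h>\max_{\boldsymbol r}f(\boldsymbol r)$, every horizontal slice of $\Omega'$ is the full cell $(0,\Lambda_1)\times(0,\Lambda_2)$, so for $\boldsymbol{\xi}\in\boldsymbol H^1(\Omega')$ each coefficient $x_3\mapsto\boldsymbol{\xi}_n(x_3)$ lies in $H^1(\hat h,h)$, with $\partial_{x_3}\boldsymbol{\xi}_n$ the $n$-th Fourier coefficient of $\partial_{x_3}\boldsymbol{\xi}$ and $\boldsymbol{\xi}_n(h)$ its value at $x_3=h$. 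From $|g(h)|^2=|g(x_3)|^2+2\int_{x_3}^h\Re\big(\overline{g(t)}g'(t)\big)\,{\rm d}t$, integrating in $x_3$ over $(\hat h,h)$ and invoking Young's inequality, I get for every $\varepsilon>0$
\[
|g(h)|^2\le\Big(\tfrac1{h-\hat h}+\tfrac1\varepsilon\Big)\|g\|_{L^2(\hat h,h)}^2+\varepsilon\|g'\|_{L^2(\hat h,h)}^2.
\]
Applying this componentwise with $g=\boldsymbol{\xi}_n$, summing over the fixed finite set $\mathcal{L}$, and using the Parseval relations $\Lambda_1\Lambda_2\sum_{n\in\mathbb Z^2}\|\boldsymbol{\xi}_n\|_{L^2(\hat h,h)}^2=\|\boldsymbol{\xi}\|_{\boldsymbol L^2(\Omega')}^2$ and $\Lambda_1\Lambda_2\sum_{n\in\mathbb Z^2}\|\partial_{x_3}\boldsymbol{\xi}_n\|_{L^2(\hat h,h)}^2\le\|\boldsymbol{\xi}\|_{\boldsymbol H^1(\Omega')}^2$, I arrive at
\[
\Re\int_{\Gamma_h}T_N\boldsymbol{\xi}\cdot\overline{\boldsymbol{\xi}}\,{\rm d}s\le\frac{C_0}{\Lambda_1\Lambda_2}\Big(\tfrac1{h-\hat h}+\tfrac1\varepsilon\Big)\|\boldsymbol{\xi}\|_{\boldsymbol L^2(\Omega')}^2+\frac{C_0\,\varepsilon}{\Lambda_1\Lambda_2}\|\boldsymbol{\xi}\|_{\boldsymbol H^1(\Omega')}^2.
\]
Choosing $\varepsilon=\Lambda_1\Lambda_2\,\delta/C_0$, which is admissible precisely because $C_0$ does not depend on $N$, yields the assertion with $C(\delta)=\frac{C_0}{\Lambda_1\Lambda_2(h-\hat h)}+\frac{C_0^{2}}{(\Lambda_1\Lambda_2)^{2}\delta}$. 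I do not expect any genuine obstacle here; the care is entirely in the bookkeeping that keeps every constant free of $N$, which works because the evanescent modes were removed and only a fixed finite set of propagating modes is absorbed by the $\varepsilon$-trace inequality.
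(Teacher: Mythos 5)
Your proof is correct and follows essentially the same route as the paper's: diagonalize the boundary term in the Fourier basis, drop the evanescent modes using the positive definiteness of $\hat M_n$ from Lemma~\ref{lemma4}, and absorb the finitely many remaining modes via the $\varepsilon$-weighted one-dimensional trace inequality on $(\hat h,h)$. The only cosmetic difference is that you identify the surviving modes by the exact condition $|\boldsymbol{\alpha}_n|\le\kappa_2$ rather than the paper's auxiliary threshold $N^*$, which if anything makes the $N$-independence of the constant more transparent.
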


\begin{proof}
It follows from the definition of the DtN operator \eqref{DtN} that we have
\begin{eqnarray*}
\Re\int_{\Gamma_{h}}T_N\boldsymbol{\xi}\cdot\overline{
\boldsymbol{\xi}}\,{\rm d}s &=&
\Lambda_1\Lambda_2\Re\sum\limits_{|n_1|, |n_2|\leq
N}(M_n\boldsymbol{\xi}_n)
\cdot\overline{\boldsymbol{\xi}_n}\\
&=&-\Lambda_1\Lambda_2\sum\limits_{|n_1|, |n_2|\leq
N}(\hat{M}_n\boldsymbol{\xi}_n)
\cdot\overline{\boldsymbol{\xi}_n}.
\end{eqnarray*}
By Lemma \ref{lemma4}, $\hat{M}_n$ is positive definite for sufficiently large
$|n|_{\max}$. Hence for fixed $\omega, \lambda, \mu$, there exists a positive
integer $N^*$ such that 
\begin{eqnarray*}
\Re\int_{\Gamma_{h}}T_N\boldsymbol{\xi}\cdot\overline{
\boldsymbol{\xi}}\,{\rm d}s\leq
-\Lambda_1\Lambda_2\sum\limits_{|n|_{\max}\leq \min(N,
N^*)}(\hat{M}_n\boldsymbol{\xi}_n)\cdot\overline{
\boldsymbol{\xi}_n}\quad\forall \,|n|_{\max}>N^*.
\end{eqnarray*}
On the other hand, there exists a constant $C$ depending only on $\omega, \mu,
\lambda$ such that
\[
\big|(\hat{M}_n\boldsymbol{\xi}_n)\cdot\overline{\boldsymbol{\xi}_n}\big|\leq
C|\boldsymbol{\xi}_n|^2\quad\forall\,|n|_{\max}\leq \min(N^*, N).
\]

For any $\delta>0$, it follows from Young's inequality that
\begin{eqnarray*}
&&(h-\hat h)\left|\phi(h)\right|^2=\int_{\hat h}^{h}
\left|\phi(x_3)\right|^2 {\rm d}x_3+
\int_{\hat h}^{h}\int_{x_3}^{h}\big(\left|\phi(s)\right|^2\big)' 
{\rm d}s{\rm d}x_3\\
&&\leq \int_{\hat h}^{h}\left|\phi(x_3)\right|^2{\rm
d}x_3+\Big(\frac{h-\hat h}{\delta}\Big)
\int_{\hat h}^{h}\left|\phi(x_3)\right|^2{\rm
d}x_3+\delta(h-\hat h)\int_{\hat h}^{h}\left|\phi^\prime(x_3)\right|^2{\rm
d}x_3,
\end{eqnarray*}
which gives that
\begin{eqnarray*}
\left|\phi(h)\right|^2\leq
\left[\delta^{-1}+(h-\hat h)^{-1}\right]\int_{\hat h}^{h}
\left|\phi(x_3)\right|^2{\rm d}x_3
+\delta\int_{\hat h}^{h}\left|\phi^\prime(x_3)\right|^2{\rm
d}x_3.
\end{eqnarray*}
Let
$\phi(\boldsymbol
x)=\sum\limits_{n\in\mathbb{Z}^2}\phi_n(x_3)e^{{\rm i}
\boldsymbol{\alpha}_n\cdot\boldsymbol{r}}$. It is easy to get 
\begin{eqnarray*}
&&\|\nabla\phi\|^2_{\boldsymbol{L}^2(\Omega')}
=\Lambda_1\Lambda_2\sum\limits_{n\in\mathbb{Z}^2}
\int_{\hat h}^{h}\left(|\phi_n'
(x_3)|^2+|\boldsymbol{\alpha}_n|^2
|\phi_n(x_3)|^2\right){\rm d}x_3,\\
&&\|\phi\|^2_{L^2(\Omega')}=\Lambda_1\Lambda_2\sum\limits_{
n\in\mathbb{Z}^2}
\int_{\hat h}^{h}|\phi_n(x_3)|^2{\rm d}x_3.
\end{eqnarray*}
Hence, we have for any $\phi\in
\boldsymbol{H}^{1}(\Omega')$ that 
\begin{eqnarray*}
\|\phi\|^2_{L^2(\Gamma_{h})} &=& \Lambda_1\Lambda_2
\sum\limits_{n\in\mathbb{Z}^2}|\phi_n(h)|^2\\
&\leq& \Lambda_1\Lambda_2\left[\delta^{-1}+(h-\hat h)^{-1}\right]
\sum\limits_{n\in\mathbb{Z}^2}\int_{\hat h}^{h}|\phi_n(x_3)|^2{\rm
d}x_3+\Lambda_1\Lambda_2\delta\sum\limits_{n\in\mathbb{Z}^2}
\int_{\hat h}^{h}|\phi_n'(x_3)|^2{\rm d}x_3\\
&\leq& \Lambda_1\Lambda_2\left[\frac{1}{\delta}+(h_1-h_2)^{-1}\right]
\sum\limits_{n\in\mathbb{Z}^2}\int_{\hat h}^{h}|\phi_n(x_3)|^2{\rm
d}x_3\\
&& \quad +\Lambda_1\Lambda_2\delta\sum\limits_{n\in\mathbb{Z}
^2}\int_{\hat h}^{h}\left[|\phi_n'(x_3)|^2+
|\boldsymbol{\alpha}_n|^2|\phi_n(x_3)|^2\right]{\rm d}x_3\\
&\leq& \left[\delta^{-1}+(h-\hat h)^{-1}\right]\|\phi\|^2_{L^2(\Omega')}
+\delta\|\nabla\phi\|^2_{\boldsymbol{L} ^2(\Omega')}\\
&\leq& C(\delta)\|\phi\|^2_{L^2(\Omega')}+\delta\|\nabla\phi\|^2_{
\boldsymbol{L}^2(\Omega')}.
\end{eqnarray*}
Combining the above estimates, we obtain 
\begin{eqnarray*}
\Re\int_{\Gamma_{h}}T_N\boldsymbol{\xi}\cdot\overline{
\boldsymbol{\xi}}{\rm d}s\leq C
\|\boldsymbol{\xi}\|^2_{\boldsymbol{L}^2(\Gamma_{h})} 
\leq C(\delta)\|\boldsymbol\xi\|^2_{\boldsymbol{L}^2(\Omega')}
+\delta\|\boldsymbol\xi\|^2_{ \boldsymbol{H} ^1(\Omega')},
\end{eqnarray*}
which completes the proof. 
\end{proof}

To estimate the third term of \eqref{Error}, we introduce the dual problem
\begin{equation}\label{DualProblem}
a(\boldsymbol{v},\boldsymbol{p})=\int_{\Omega}\boldsymbol{v}\cdot\overline{
\boldsymbol{\xi}}\,{\rm d}\boldsymbol{x}
\quad \forall\, \boldsymbol{v}\in H_{S,{\rm qp}}^{1}(\Omega).
\end{equation}
It is easy to check that $\boldsymbol{p}$ is the weak solution of the boundary
value problem
\begin{equation}\label{DualProblem2}
\begin{cases}
\mu\Delta\boldsymbol{p}+(\lambda+\mu)\nabla\nabla\cdot\boldsymbol{p}
+\omega^2\boldsymbol{p}=-\boldsymbol{\xi}
\quad &{\rm in}\,\Omega\\
\boldsymbol{p}=0	&{\rm on}\,S\\
B\boldsymbol{p}=T^{*}\boldsymbol{p} &{\rm on}\,\Gamma_{h}
\end{cases}
\end{equation}
where $T^*$ is the adjoint operator to $T$ under the scalar product in
$L^2(\Gamma_h)$. Taking
$\boldsymbol{v}=\boldsymbol{\xi}$ in \eqref{DualProblem}, we have 
\begin{equation}\label{xiL2}
\|\boldsymbol{\xi}\|^2_{\boldsymbol{L}^2(\Omega)}=a(\boldsymbol{\xi},
\boldsymbol{p}) -\int_{\Gamma_{h}}\left(T-T_N\right)\boldsymbol{\xi}
\cdot\overline{\boldsymbol{p}}\,{\rm d}s
+\int_{\Gamma_{h}}\left(T-T_N\right)\boldsymbol{\xi}
\cdot\overline{\boldsymbol{p}}\,{\rm d}s.
\end{equation}

It is clear that the evaluation of $\boldsymbol{p}$ is essential to the error
estimate. Lemmas \ref{lemmaZ}--\ref{lemmaP} give the asymptotic analysis of
$\boldsymbol{p}$. First, we introduce the Helmholtz decomposition of
$\boldsymbol{\xi}$ in $\Omega'$: 
\begin{equation}\label{HelmholtzXi}
\boldsymbol{\xi}=\nabla\zeta+\nabla\times \boldsymbol{Z},\quad
\nabla\cdot \boldsymbol{Z}=0,
\end{equation}
where $ \boldsymbol{Z}=(Z_1, Z_2, Z_3)^\top$ and 
\begin{eqnarray*}
\zeta(\boldsymbol{x})=\sum\limits_{\boldsymbol{n}\in\mathbb{Z}^2}\zeta_n(x_3)e^{
{\rm i}\boldsymbol{\alpha}_n\cdot\boldsymbol{r}}, \quad
Z_j(\boldsymbol{x})=\sum\limits_{\boldsymbol{n}\in\mathbb{Z}^2}Z_{jn}
(x_3)e^{{\rm i}\boldsymbol{\alpha}_n\cdot\boldsymbol{r}}.
\end{eqnarray*}
Substituting the above Fourier series expansions into \eqref{HelmholtzXi}
gives 
\begin{equation}\label{systemxi}
\begin{bmatrix}
Z_{1n}'(x_3) \\
Z_{2n}'(x_3)\\
Z_{3n}'(x_3)\\
\zeta_n'(x_3)
\end{bmatrix}=
\begin{bmatrix}
0 & 0 & {\rm i}\alpha_{1n} & -{\rm i}\alpha_{2n}\\
0 & 0 & {\rm i}\alpha_{2n} &  {\rm i}\alpha_{1n}\\
-{\rm i}\alpha_{1n} & -{\rm i}\alpha_{2n} & 0 & 0\\
{\rm i}\alpha_{2n} & -{\rm i}\alpha_{1n} & 0 & 0 
\end{bmatrix}
\begin{bmatrix}
Z_{1n}(x_3) \\
Z_{2n}(x_3) \\
Z_{3n}(x_3)\\
\zeta_n(x_3)
\end{bmatrix}
+
\begin{bmatrix}
\xi_{2n}(x_3) \\ 
-\xi_{1n}(x_3) \\ 0 \\
\xi_{3n}(x_3)
\end{bmatrix}.
\end{equation}
In addition, the homogeneous Dirichlet boundary condition is imposed 
for the Fourier coefficients at $x_3=h$:
\begin{equation}\label{BoundaryZ}
Z_{1n}(h)=Z_{2n}(h)=Z_{3n}(h)=\zeta_n(h)=0.
\end{equation}

\begin{lemma}\label{lemmaZ}
The solutions of the problem \eqref{systemxi}--\eqref{BoundaryZ} in $[\hat h,
h]$ satisfy the following estimates:
\begin{eqnarray*}
\left|\zeta_n(x_3)\right|&\lesssim&
\|\boldsymbol{\xi}_n\|_{\boldsymbol{L}^{\infty}([\hat h, h])}
\frac{1}{|\boldsymbol{\alpha}_n|}
e^{|\boldsymbol{\alpha}_n|(h-x_3)},\\
\left|Z_{jn}(x_3)\right|&\lesssim&
\|\boldsymbol{\xi}_n\|_{\boldsymbol{L}^{\infty}([\hat h, h])}
\frac{1}{|\boldsymbol{\alpha}_n|}
e^{|\boldsymbol{\alpha}_n|(h-x_3)}, \quad
j=1,2,3.\\
\end{eqnarray*}	
\end{lemma}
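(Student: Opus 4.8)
The plan is to read \eqref{systemxi} as a constant-coefficient (in $x_3$) linear ODE system and solve it explicitly via the matrix exponential. Write $\boldsymbol{w}_n=(Z_{1n},Z_{2n},Z_{3n},\zeta_n)^\top$ and $\boldsymbol{g}_n=(\xi_{2n},-\xi_{1n},0,\xi_{3n})^\top$, and let $A_n$ denote the $4\times 4$ matrix in \eqref{systemxi}, so that $\boldsymbol{w}_n'=A_n\boldsymbol{w}_n+\boldsymbol{g}_n$ on $[\hat h,h]$ with the terminal condition $\boldsymbol{w}_n(h)=0$ from \eqref{BoundaryZ}. Note that $|\boldsymbol{g}_n(x_3)|=|\boldsymbol{\xi}_n(x_3)|$ and that every entry of $A_n$ is $0$ or $\pm{\rm i}\alpha_{jn}$, whence $\|A_n\|\le 2|\boldsymbol{\alpha}_n|$.

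The key algebraic fact is that $A_n^2=|\boldsymbol{\alpha}_n|^2 I_4$. The matrix $A_n$ has vanishing $2\times 2$ diagonal blocks, and multiplying out the product of its two $2\times 2$ off-diagonal blocks yields $|\boldsymbol{\alpha}_n|^2 I_2$ in each diagonal position; this is the one genuinely computational step, but it is short. Given this, the matrix exponential takes the closed form
\[
e^{A_n t}=\cosh\!\big(|\boldsymbol{\alpha}_n| t\big)I_4+\frac{\sinh\!\big(|\boldsymbol{\alpha}_n| t\big)}{|\boldsymbol{\alpha}_n|}A_n,
\]
(the limiting form $I_4+tA_n$ being understood when $\boldsymbol{\alpha}_n=0$). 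Using $\|A_n\|\le 2|\boldsymbol{\alpha}_n|$ together with $|\cosh y|,|\sinh y|\le e^{|y|}$ gives the sharp bound $\|e^{A_n t}\|\lesssim e^{|\boldsymbol{\alpha}_n|\,|t|}$ with an absolute constant; keeping the exponent sharp (no spurious constant multiplying $|\boldsymbol{\alpha}_n|$) is precisely why the closed form is needed rather than a crude $e^{\|A_n\|\,|t|}$ estimate.

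Variation of parameters with $\boldsymbol{w}_n(h)=0$ then gives
\[
\boldsymbol{w}_n(x_3)=-\int_{x_3}^{h}e^{A_n(x_3-s)}\boldsymbol{g}_n(s)\,{\rm d}s,\qquad \hat h\le x_3\le h.
\]
On the range of integration $0\le s-x_3\le h-\hat h$, so $\|e^{A_n(x_3-s)}\|\lesssim e^{|\boldsymbol{\alpha}_n|(s-x_3)}$, and therefore
\[
|\boldsymbol{w}_n(x_3)|\lesssim\|\boldsymbol{\xi}_n\|_{\boldsymbol{L}^{\infty}([\hat h,h])}\int_{x_3}^{h}e^{|\boldsymbol{\alpha}_n|(s-x_3)}\,{\rm d}s=\|\boldsymbol{\xi}_n\|_{\boldsymbol{L}^{\infty}([\hat h,h])}\,\frac{e^{|\boldsymbol{\alpha}_n|(h-x_3)}-1}{|\boldsymbol{\alpha}_n|}\lesssim\frac{1}{|\boldsymbol{\alpha}_n|}e^{|\boldsymbol{\alpha}_n|(h-x_3)}\|\boldsymbol{\xi}_n\|_{\boldsymbol{L}^{\infty}([\hat h,h])}.
\]
Since $\zeta_n$ and each $Z_{jn}$ is a single component of $\boldsymbol{w}_n$, the two asserted estimates follow at once.

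I expect the only real obstacle to be the identity $A_n^2=|\boldsymbol{\alpha}_n|^2 I_4$; everything else is bookkeeping. Two remarks for the write-up: the bound is vacuous unless $\|\boldsymbol{\xi}_n\|_{\boldsymbol{L}^{\infty}([\hat h,h])}<\infty$, which holds because $\boldsymbol{\xi}\in\boldsymbol{H}^1(\Omega')$ forces $\boldsymbol{\xi}_n\in\boldsymbol{H}^1((\hat h,h))\hookrightarrow\boldsymbol{L}^{\infty}([\hat h,h])$; and the factor $|\boldsymbol{\alpha}_n|^{-1}$ presupposes $\boldsymbol{\alpha}_n\ne 0$, which is harmless since the lemma is only ever used for $|n|$ large (for $\boldsymbol{\alpha}_n=0$ one simply bounds the integral by $(h-\hat h)\|\boldsymbol{\xi}_n\|_{\boldsymbol{L}^{\infty}}$).
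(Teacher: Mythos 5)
Your proposal is correct and follows essentially the same route as the paper: both solve the system \eqref{systemxi} explicitly via the fundamental matrix $e^{A_n(x_3-\hat h)}$ and variation of parameters with the terminal condition \eqref{BoundaryZ}, then bound the resulting Duhamel integral. The only difference is presentational — the paper diagonalizes $A_n=V_n\,\mathrm{diag}(|\boldsymbol{\alpha}_n|,|\boldsymbol{\alpha}_n|,-|\boldsymbol{\alpha}_n|,-|\boldsymbol{\alpha}_n|)\,V_n^*$ and writes out every entry of $\Phi_n$, $\Phi_n^{-1}$ and the four component formulas before estimating, whereas your identity $A_n^2=|\boldsymbol{\alpha}_n|^2 I_4$ (which is correct, and consistent with $A_n$ being Hermitian with eigenvalues $\pm|\boldsymbol{\alpha}_n|$) yields the same fundamental matrix in closed form and lets you conclude with a single operator-norm bound.
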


\begin{proof}
Denote the coefficient matrix of system \eqref{systemxi} by
$A_n$, which is the matrix $A_n$ defined in \eqref{uphi}. A
straightforward calculation shows that the
coefficient matrix $A_n$ has the following diagonalization: 
\[
A_n=V_n\begin{bmatrix}
|\boldsymbol{\alpha}_n| & 0 & 0 & 0\\
0 & |\boldsymbol{\alpha}_n| & 0 & 0\\
0 & 0 & -|\boldsymbol{\alpha}_n| & 0\\
0 & 0 & 0 & -|\boldsymbol{\alpha}_n|
\end{bmatrix}V_n^*,
\]
where 
\[
V_n=\frac{1}{\sqrt{2}|\boldsymbol{\alpha}_n|}
\begin{bmatrix}
|\boldsymbol{\alpha}_n| & 0 &
|\boldsymbol{\alpha}_n| & 0\\
0 & |\boldsymbol{\alpha}_n| & 0 &
|\boldsymbol{\alpha}_n|\\
-{\rm i}\alpha_{1n} & -{\rm i}\alpha_{2n} & {\rm
i}\alpha_{1n} & {\rm i}\alpha_{2n}\\
{\rm i}\alpha_{2n} & -{\rm i}\alpha_{1n} & -{\rm
i}\alpha_{2n} & {\rm i}\alpha_{1n}
\end{bmatrix}.
\]
Hence the fundamental solution of \eqref{systemxi} is
\begin{eqnarray*}
\Phi_n(x_3) &=& e^{\int_{\hat h}^{x_3}
A_n(\tau) \,d\tau} \\
&=& V_n\begin{bmatrix}
e^{|\boldsymbol{\alpha}_n|\left(x_3-\hat h\right)} & 0 & 0 & 0\\
0 &
e^{|\boldsymbol{\alpha}_n|\left(x_3-\hat h\right)} & 0
& 0\\
0 & 0 &
e^{-|\boldsymbol{\alpha}_n|\left(x_3-\hat h\right)} &
0\\
0 & 0 & 0 &
e^{-|\boldsymbol{\alpha}_n|\left(x_3-\hat h\right)}
\end{bmatrix}V_n^*\\
&=& \frac{1}{2|\boldsymbol{\alpha}_n|}
\left(D_{1n}+D_{2n}\right),
\end{eqnarray*}
where
\begin{equation*}
D_{1n}=
\begin{bmatrix}
|\boldsymbol{\alpha}_n|e^{|\boldsymbol{\alpha}_n|\left(x_3-\hat h\right)} &
0 &{\rm i}\alpha_{1n}e^{|\boldsymbol{\alpha}_n|\left(x_3-h_2\right)} &
-{\rm i}\alpha_{2n}e^{|\boldsymbol{\alpha}_n|\left(x_3-\hat h\right)}\\
0 & |\boldsymbol{\alpha}_n|e^{|\boldsymbol{\alpha}_n|\left(x_3-h_2\right)} &
{\rm i}\alpha_{2n}e^{|\boldsymbol{\alpha}_n|\left(x_3-\hat h\right)} &
{\rm i}\alpha_{1n}e^{|\boldsymbol{\alpha}_n|\left(x_3-\hat h\right)}\\
-{\rm i}\alpha_{1n}e^{|\boldsymbol{\alpha}_n|\left(x_3-\hat h\right)} &
-{\rm i}\alpha_{2n}e^{|\boldsymbol{\alpha}_n|\left(x_3-\hat h\right)} &
|\boldsymbol{\alpha}_n|e^{|\boldsymbol{\alpha}_n|\left(x_3-\hat h\right)} &
0\\ {\rm i}\alpha_{2n}e^{|\boldsymbol{\alpha}_n|\left(x_3-\hat h\right)} &
-{\rm i}\alpha_{1n}e^{|\boldsymbol{\alpha}_n|\left(x_3-\hat h\right)} &
0 &|\boldsymbol{\alpha}_n|e^{|\boldsymbol{\alpha}_n|\left(x_3-\hat h\right)} 
\end{bmatrix}
\end{equation*}	 and
\begin{equation*}
D_{2n}=
\begin{bmatrix}
|\boldsymbol{\alpha}_n|e^{-|\boldsymbol{\alpha}_n |\left(x_3-\hat h\right)} &
0 &-{\rm i}\alpha_{1n}e^{-|\boldsymbol{\alpha}_n|\left(x_3-\hat h\right)} &
{\rm i}\alpha_{2n}e^{-|\boldsymbol{\alpha}_n|\left(x_3-\hat h\right)}\\
0 & |\boldsymbol{\alpha}_n|e^{-|\boldsymbol{\alpha}_n|\left(x_3-\hat h\right)} &
-{\rm i}\alpha_{2n}e^{-|\boldsymbol{\alpha}_n|\left(x_3-\hat h\right)} &
-{\rm i}\alpha_{1n}e^{-|\boldsymbol{\alpha}_n|\left(x_3-\hat h\right)}\\
{\rm i}\alpha_{1n}e^{-|\boldsymbol{\alpha}_n|\left(x_3-\hat h\right)} &
{\rm i}\alpha_{2n}e^{-|\boldsymbol{\alpha}_n|\left(x_3-\hat h\right)} &
|\boldsymbol{\alpha}_n|e^{-|\boldsymbol{\alpha}_n|\left(x_3-\hat h\right)} &
0\\ -{\rm i}\alpha_{2n}e^{-|\boldsymbol{\alpha}_n|\left(x_3-\hat h\right)} &
{\rm i}\alpha_{1n}e^{-|\boldsymbol{\alpha}_n|\left(x_3-\hat h\right)} &
0 &|\boldsymbol{\alpha}_n|e^{-|\boldsymbol{\alpha}_n|\left(x_3-\hat h\right)}
\end{bmatrix}. 
\end{equation*}
The inverse of the fundamental matrix is
\[
\Phi_n^{-1}(x_3) =
\frac{1}{2|\boldsymbol{\alpha}_n|}(\hat{D}_{1n}+\hat{D}_{2n}),
\]
where
\begin{equation*}
\hat{D}_{1n}=
\begin{bmatrix}
|\boldsymbol{\alpha}_n|e^{-|\boldsymbol{\alpha}_n
|\left(x_3-\hat h\right)} &
0 &{\rm
i}\alpha_{1n}e^{-|\boldsymbol{\alpha}_n|\left(x_3-\hat h\right)} &
-{\rm i}\alpha_{2n}e^{-|\boldsymbol{\alpha}_n|\left(x_3-\hat h\right)}\\
0 & |\boldsymbol{\alpha}_n|e^{-|\boldsymbol{\alpha}_n|\left(x_3-\hat h\right)}
& {\rm i}\alpha_{2n}e^{-|\boldsymbol{\alpha}_n|\left(x_3-\hat h\right)} &
{\rm i}\alpha_{1n}e^{-|\boldsymbol{\alpha}_n|\left(x_3-\hat h\right)}\\
-{\rm i}\alpha_{1n}e^{-|\boldsymbol{\alpha}_n|\left(x_3-\hat h\right)} &
-{\rm i}\alpha_{2n}e^{-|\boldsymbol{\alpha}_n|\left(x_3-\hat h\right)} &
|\boldsymbol{\alpha}_n|e^{-|\boldsymbol{\alpha}_n|\left(x_3-\hat h\right)} &
0\\ {\rm i}\alpha_{2n}e^{-|\boldsymbol{\alpha}_n|\left(x_3-\hat h\right)} &
-{\rm i}\alpha_{1n}e^{-|\boldsymbol{\alpha}_n|\left(x_3-\hat h\right)} &
0 &|\boldsymbol{\alpha}_n|e^{-|\boldsymbol{\alpha}_n|\left(x_3-\hat h\right)} 
\end{bmatrix}
\end{equation*}
and
\begin{equation*}
\hat{D}_{2n}=
\begin{bmatrix}
|\boldsymbol{\alpha}_n|e^{|\boldsymbol{\alpha}_n|\left(x_3-\hat h\right)} &
0 &-{\rm i}\alpha_{1n}e^{|\boldsymbol{\alpha}_n|\left(x_3-\hat h\right)} &
{\rm i}\alpha_{2n}e^{|\boldsymbol{\alpha}_n|\left(x_3-\hat h\right)}\\
0 & |\boldsymbol{\alpha}_n|e^{|\boldsymbol{\alpha}_n|\left(x_3-\hat h\right)}
& -{\rm i}\alpha_{2n}e^{|\boldsymbol{\alpha}_n|\left(x_3-\hat h\right)} &
-{\rm i}\alpha_{1n}e^{|\boldsymbol{\alpha}_n|\left(x_3-\hat h\right)}\\
{\rm i}\alpha_{1n}e^{|\boldsymbol{\alpha}_n|\left(x_3-\hat h\right)} &
{\rm i}\alpha_{2n}e^{|\boldsymbol{\alpha}_n|\left(x_3-\hat h\right)} &
|\boldsymbol{\alpha}_n|e^{|\boldsymbol{\alpha}_n|\left(x_3-\hat h\right)} &
0\\ -{\rm i}\alpha_{2n}e^{|\boldsymbol{\alpha}_n|\left(x_3-\hat h\right)} &
{\rm i}\alpha_{1n}e^{|\boldsymbol{\alpha}_n|\left(x_3-\hat h\right)} &
0 &|\boldsymbol{\alpha}_n|e^{|\boldsymbol{\alpha}_n|\left(x_3-\hat h\right)} 
\end{bmatrix}.
\end{equation*}

By the method of the variation of parameters, the solution of \eqref{systemxi}
is
\begin{equation}\label{DcZ}
(Z_{1n}(x_3), Z_{2n}(x_3), Z_{3n}(x_3),
\zeta_n(x_3))^\top=\Phi_n(x_3)C_n(x_3).
\end{equation}
It can be easily verified that the vector of unknowns $C_n=(C_{1n},
C_{2n}, C_{3n}, C_{4n})^\top$ satisfies
\begin{equation*}
C'_n(x_3)=\Phi_n^{-1}(x_3)(\xi_{2n}(x_3), -\xi_{1n}(x_3), 
0, \xi_{3n}(x_3))^\top,
\end{equation*}
which has the solution
\begin{eqnarray*}
C_{1n}(x_3)&=&-\frac{1}{2}\int_{x_3}^{h}\xi_{2n}(t)a_n(t)\,{\rm d}t
-\frac{\rm
i}{2}\frac{\alpha_{2n}}{|\boldsymbol{\alpha}_n|}\int_{x_3}^{h}\xi_{3n}
(t)b_n(t)\, {\rm d}t,\\
C_{2n}(x_3)&=&\frac{1}{2}\int_{x_3}^{h}\xi_{1n}
(t)a_n(t)\,{\rm d}t+\frac{\rm
i}{2}\frac{\alpha_{1n}}{|\boldsymbol{\alpha}_n|}\int_{x_3}^{h}\xi_{3n}(t)
b_n(t)\,{\rm d}t,\\
C_{3n}(x_3)&=&\frac{\rm
i}{2}\frac{\alpha_{2n}}{|\boldsymbol{\alpha}_n|
}\int_{x_3}^{h}\xi_{1n}(t)b_n(t)\,{\rm d}t-\frac{\rm
i}{2}\frac{\alpha_{1n}}{|\boldsymbol{\alpha}_n|
}\int_{x_3}^{h}\xi_{2n}(t)b_n(t)\,{\rm d}t,\\	
C_{4n}(x_3)&=&\frac{\rm
i}{2}\frac{\alpha_{2n}}{|\boldsymbol{\alpha}_n|
}\int_{x_3}^{h}\xi_{2n}(t)b_n(t)\,{\rm d}t+\frac{\rm
i}{2}\frac{\alpha_{1n}}{|\boldsymbol{\alpha}_n|
}\int_{x_3}^{h}\xi_{1n}(t)b_n(t)\,{\rm
d}t-\frac{1}{2}\int_{x_3}^{h}\xi_{3n}(t)a_n(t)\,{\rm d}t.
\end{eqnarray*}
Here
\begin{eqnarray*}
a_n(x_3)=e^{|\boldsymbol{\alpha}_n|\left(x_3-\hat h\right)}
+e^{-|\boldsymbol{\alpha}_n|\left(x_3-\hat h\right)},\quad 
b_n(x_3)=e^{|\boldsymbol{\alpha}_n|\left(x_3-\hat h\right)}
-e^{-|\boldsymbol{\alpha}_n|\left(x_3-\hat h\right)}. 
\end{eqnarray*}
Substituting the expressions of $C_n$ into \eqref{DcZ}, we obtain 
\begin{eqnarray}
&&Z_{1n}(x_3)=-\frac{1}{2}e^{|\boldsymbol{\alpha}_n|x_3}\int_{x_3}^{h}
e^{-|\boldsymbol{\alpha}_n|t}\xi_{2n}(t)\,{\rm d}t
-\frac{1}{2}e^{-|\boldsymbol{\alpha}_n|x_3}\int_{x_3}^{h}
e^{|\boldsymbol{\alpha}_n|t}\xi_{2n}(t)\,{\rm d}t \notag\\
&&\qquad+\frac{\rm
i}{2}\frac{\alpha_{2n}}{|\boldsymbol{\alpha}_n|}e^{|\boldsymbol{\alpha}_n|x_3}
\int_{x_3}^{h}e^{-|\boldsymbol{\alpha}_n|t}\xi_{3n}(t)\,{\rm d}t
-\frac{\rm
i}{2}\frac{\alpha_{2n}}{|\boldsymbol{\alpha}_n|}e^{-|\boldsymbol{\alpha}_n|x_3}
\int_{x_3}^{h}e^{|\boldsymbol{\alpha}_n|t}\xi_{3n}(t)\,{\rm d}t, \label{Z1}\\
&&
Z_{2n}(x_3)=\frac{1}{2}e^{|\boldsymbol{\alpha}_n|x_3}\int_{x_3}^{h}
e^{-|\boldsymbol{\alpha}_n|t}\xi_{1n}(t)\,{\rm d}t
+\frac{1}{2}e^{-|\boldsymbol{\alpha}_n|x_3}\int_{x_3
}^{h}e^{|\boldsymbol{\alpha}_n|t}\xi_{1n}(t)\,{\rm d}t \notag\\
&&\qquad -\frac{\rm i}{2}\frac{\alpha_{1n}}{|\boldsymbol{\alpha}_n|
}e^{|\boldsymbol{\alpha}_n|x_3}\int_{x_3}^{h}e^{-|\boldsymbol{\alpha}_n|t}\xi_{
3n}(t)\,{\rm d}t +\frac{\rm
i}{2}\frac{\alpha_{1n}}{|\boldsymbol{\alpha}_n|}e^{-|\boldsymbol{\alpha}_n|x_3}
\int_{x_3}^{h}e^{|\boldsymbol{\alpha}_n|t}\xi_{3n}(t)\,{\rm d}t, \label{Z2}\\
&&Z_{3n}(x_3)=\frac{\rm
i}{2}\frac{\alpha_{1n}}{|\boldsymbol{\alpha}_n|}e^{|\boldsymbol{\alpha}_n|x_3}
\int_{x_3}^{h}e^{-|\boldsymbol{\alpha}_n|t}\xi_{2n}(t)\,{\rm d}t-\frac{\rm
i}{2}\frac{\alpha_{1n}}{|\boldsymbol{\alpha}_n|}e^{-|\boldsymbol{\alpha}_n|x_3}
\int_{x_3}^{h}e^{|\boldsymbol{\alpha}_n|t}\xi_{2n}(t)\,{\rm d}t\notag\\
&&\qquad -\frac{\rm i}{2}\frac{\alpha_{2n}}{|\boldsymbol{\alpha}_n|
}e^{|\boldsymbol{\alpha}_n|x_3}\int_{x_3}^{h}
e^{-|\boldsymbol{\alpha}_n|t}\xi_{1n}(t)\,{\rm d}t +\frac{\rm
i}{2}\frac{\alpha_{2n}}{|\boldsymbol{\alpha}_n|
}e^{-|\boldsymbol{\alpha}_n|x_3}\int_{x_3}^{h}
e^{|\boldsymbol{\alpha}_n|t}\xi_{1n}(t)\,{\rm d}t \label{Z3}
\end{eqnarray}
and
\begin{eqnarray}
&&\zeta_n(x_3)=-\frac{1}{2}e^{|\boldsymbol{\alpha}_n|x_3}\int_{x_3}^{h}
e^{-|\boldsymbol{\alpha}_n|t}\xi_{3n}(t)\,{\rm d}t
-\frac{1}{2}e^{-|\boldsymbol{\alpha}_n|x_3}\int_{x_3
}^{h}e^{|\boldsymbol{\alpha}_n|t}\xi_{3n}(t)\,{\rm d}t \notag\\
&&\qquad -\frac{\rm
i}{2}\frac{\alpha_{2n}}{|\boldsymbol{\alpha}_n|
}e^{|\boldsymbol{\alpha}_n|x_3}\int_{x_3}^{h}
e^{-|\boldsymbol{\alpha}_n|t}\xi_{2n}(t)\,{\rm d}t
+\frac{\rm
i}{2}\frac{\alpha_{2n}}{|\boldsymbol{\alpha}_n|
}e^{-|\boldsymbol{\alpha}_n|x_3}\int_{x_3}^{h}
e^{|\boldsymbol{\alpha}_n|t}\xi_{2n}(t)\,{\rm d}t\notag\\
&&\qquad-\frac{\rm
i}{2}\frac{\alpha_{1n}}{|\boldsymbol{\alpha}_n|
}e^{|\boldsymbol{\alpha}_n|x_3}\int_{x_3}^{h}
e^{-|\boldsymbol{\alpha}_n|t}\xi_{1n}(t)\,{\rm d}t
+\frac{\rm
i}{2}\frac{\alpha_{1n}}{|\boldsymbol{\alpha}_n|
}e^{-|\boldsymbol{\alpha}_n|x_3}\int_{x_3}^{h}
e^{|\boldsymbol{\alpha}_n|t}\xi_{1n}(t)\,{\rm d}t. \label{zeta}
\end{eqnarray}
It is easy to check from \eqref{Z1}--\eqref{zeta} that 
\begin{eqnarray*}
|\zeta_n(x_3)| &\lesssim&
\|\boldsymbol{\xi}_n\|_{\boldsymbol{L}^{\infty}([\hat h, h])}
\frac{1}{|\boldsymbol{\alpha}_n|}e^{|\boldsymbol{\alpha}_n|(h-x_3)},\\
|Z_{jn}(x_3)|&\lesssim&
\|\boldsymbol{\xi}_n\|_{\boldsymbol{L}^{\infty}([\hat h, h])}
\frac{1}{|\boldsymbol{\alpha}_n|}
e^{|\boldsymbol{\alpha}_n|(h-x_3)}, \quad
j=1,2,3,
\end{eqnarray*}
which complete the proof.
\end{proof}

Consider the following boundary value problem for $\boldsymbol{p}$ in
$\Omega'$: 
\begin{equation}\label{DualProblem3}
\begin{cases}
\mu\Delta\boldsymbol{p}+(\lambda+\mu)\nabla\nabla\cdot\boldsymbol{p}
+\omega^2\boldsymbol{p}=-\boldsymbol{\xi}\quad &\text{in} ~ \Omega',\\
\boldsymbol{p}=\boldsymbol{p} \quad &\text{on} ~ \Gamma_{\hat h},\\
B\boldsymbol{p}=T^{*}\boldsymbol{p} \quad &\text{on} ~ \Gamma_{h}. 
\end{cases}
\end{equation}

\begin{lemma}\label{lemma6}
Let $\boldsymbol{q}=(q_1, q_2, q_3)^\top$ and $g$ have the Fourier
series expansions 
\begin{eqnarray*}
q_j(\boldsymbol
x)=\sum\limits_{n\in\mathbb{Z}^2}q_{jn} (x_3)
e^{{\rm i}\boldsymbol{\alpha}_n\cdot\boldsymbol{r}},\quad
g(\boldsymbol x)=\sum\limits_{n\in\mathbb{Z}^2}g_n
(x_3)e^{{\rm i}\boldsymbol{\alpha}_n\cdot\boldsymbol{r}},\quad\boldsymbol
x\in\Omega'
\end{eqnarray*}
and satisfy
\begin{equation}\label{gqsystem}
\begin{cases}
(\lambda+2\mu)\left(\Delta g+\kappa_1^2 g\right)=-\zeta &\quad {\rm
in} ~ \Omega',\\
\mu\left(\nabla\times(\nabla\times
\boldsymbol{q})-\kappa_2^2\boldsymbol{q}\right)=\boldsymbol{Z},
\quad \nabla\cdot\boldsymbol{q}=0 &\quad {\rm in} ~ \Omega',\\
\boldsymbol{q}=\boldsymbol{q}, \quad g=g &\quad {\rm on} ~ \Gamma_{\hat h}. 
\end{cases}
\end{equation}
Moreover, the Fourier coefficients are assumed to satisfy the following
boundary conditions on $\Gamma_h$: 
\begin{eqnarray}\label{tbcgq1}
g_n'(h)=-{\rm i}\overline{\beta_{1n}}g_n(h)
\end{eqnarray}
and
\begin{eqnarray}\label{tbcgq2}
q_{1n}'(h)=-{\rm
i}\overline{\beta_{2n}}q_{1n}(h),\quad q_{2n}'(h)=-{\rm
i}\overline{\beta_{2n}}q_{2n}(h),\quad
q_{3n}'(h)={\rm
i}\alpha_{1n}q_{1n}(h)+{\rm
i}\alpha_{2n}q_{2n}(h).
\end{eqnarray}
Then $\boldsymbol p$ has the Helmholtz decomposition $\boldsymbol{p}=\nabla
g+\nabla\times\boldsymbol{q}$ and satisfies the boundary
value problem \eqref{DualProblem3}.
\end{lemma}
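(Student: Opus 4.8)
The plan is to check directly that $\boldsymbol{p}:=\nabla g+\nabla\times\boldsymbol{q}$ satisfies all three lines of \eqref{DualProblem3}; since \eqref{DualProblem3} is the restriction of \eqref{DualProblem2} to $\Omega'$ and inherits its uniqueness, this forces $\boldsymbol{p}=\nabla g+\nabla\times\boldsymbol{q}$, which is the assertion of the lemma.

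First, the interior equation. Because $\nabla\cdot(\nabla\times\boldsymbol{q})=0$ and $\nabla\times\nabla g=0$, one has $\nabla\cdot\boldsymbol{p}=\Delta g$, and using $\Delta=\nabla\nabla\cdot-\nabla\times\nabla\times$ together with the commutation of $\Delta$ with $\nabla$ and with $\nabla\times$,
\[
\mu\Delta\boldsymbol{p}+(\lambda+\mu)\nabla\nabla\cdot\boldsymbol{p}+\omega^2\boldsymbol{p}
=\nabla\big[(\lambda+2\mu)\Delta g+\omega^2 g\big]+\nabla\times\big[\mu\Delta\boldsymbol{q}+\omega^2\boldsymbol{q}\big].
\]
Since $\kappa_1^2=\omega^2/(\lambda+2\mu)$, $\kappa_2^2=\omega^2/\mu$, and $\Delta\boldsymbol{q}=-\nabla\times(\nabla\times\boldsymbol{q})$ by $\nabla\cdot\boldsymbol{q}=0$, the two bracketed quantities equal $(\lambda+2\mu)(\Delta g+\kappa_1^2 g)=-\zeta$ and $-\mu\big(\nabla\times(\nabla\times\boldsymbol{q})-\kappa_2^2\boldsymbol{q}\big)=-\boldsymbol{Z}$ by \eqref{gqsystem}, so the right-hand side equals $-(\nabla\zeta+\nabla\times\boldsymbol{Z})=-\boldsymbol{\xi}$ by \eqref{HelmholtzXi}. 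The Dirichlet condition on $\Gamma_{\hat h}$ is automatic: the Dirichlet data of $g$ and $\boldsymbol{q}$ on $\Gamma_{\hat h}$ is, by hypothesis, that of a Helmholtz decomposition of $\boldsymbol{p}|_{\Gamma_{\hat h}}$, whence $\nabla g+\nabla\times\boldsymbol{q}=\boldsymbol{p}$ there.

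The substantive step is the transparent boundary condition on $\Gamma_h$. Here $B=D$ is the boundary operator obtained by integrating \eqref{DualProblem} by parts onto the dual variable, and $T^{*}\boldsymbol{p}=\sum_{n}M_n^{*}\boldsymbol{p}_n(h)e^{\mathrm{i}\boldsymbol{\alpha}_n\cdot\boldsymbol{r}}$ with $M_n^{*}=\overline{M_n}^{\top}$. The key simplification is that $\zeta_n(h)=Z_{jn}(h)=0$, which is exactly \eqref{BoundaryZ} and is also visible in \eqref{Z1}--\eqref{zeta}, all of whose integrals run from $x_3$ to $h$. Expanding $\boldsymbol{p}=\nabla g+\nabla\times\boldsymbol{q}$ in Fourier series and evaluating at $x_3=h$, the coefficients $\boldsymbol{p}_n(h)$ depend only on $g_n(h),q_{1n}(h),q_{2n}(h),q_{3n}(h)$ and on $g_n'(h),q_{jn}'(h)$; so does $(D\boldsymbol{p})_n(h)$, because the second $x_3$-derivatives occurring in $D\boldsymbol{p}$ reduce, via \eqref{gqsystem} and $\zeta_n(h)=Z_{jn}(h)=0$, to $g_n''(h)=-\beta_{1n}^2 g_n(h)$ and $q_{jn}''(h)=-\beta_{2n}^2 q_{jn}(h)$. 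One then substitutes \eqref{tbcgq1}--\eqref{tbcgq2} to express $g_n'(h),q_{jn}'(h)$ through $g_n(h),q_{jn}(h)$, inverts the linear relations between $(g_n(h),q_{1n}(h),q_{2n}(h),q_{3n}(h))$ and $\boldsymbol{p}_n(h)$ coming from the decomposition and the gauge constraint $\nabla\cdot\boldsymbol{q}=0$ (admissible since $\chi_n\neq0$ for all $n$), and feeds this back into $(D\boldsymbol{p})_n(h)$ to obtain $(D\boldsymbol{p})_n(h)=M_n^{*}\boldsymbol{p}_n(h)$, i.e. $D\boldsymbol{p}=T^{*}\boldsymbol{p}$ on $\Gamma_h$. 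This computation is the mirror image of the forward derivation \eqref{u1}--\eqref{Mn} of $M_n$ in \cite{JLLZ-cms18}, with every occurrence of $\beta_{1n},\beta_{2n}$ replaced by $-\overline{\beta_{1n}},-\overline{\beta_{2n}}$, which is precisely the operation that sends $M_n$ to $M_n^{*}$.

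I expect this last step to be the only genuine difficulty: it amounts to repeating the algebra behind \eqref{Mn} while carrying the complex conjugates, and the identification with $M_n^{*}$ should be checked separately in the propagating regime $|\boldsymbol{\alpha}_n|<\kappa_j$ (where $\beta_{jn}$ is real, so $-\overline{\beta_{jn}}=-\beta_{jn}$) and the evanescent regime $|\boldsymbol{\alpha}_n|>\kappa_j$ (where $\beta_{jn}$ is purely imaginary, so $-\overline{\beta_{jn}}=\beta_{jn}$). Everything else is routine vector calculus; once the three lines of \eqref{DualProblem3} are verified, uniqueness of that boundary value problem yields $\boldsymbol{p}=\nabla g+\nabla\times\boldsymbol{q}$, completing the proof.
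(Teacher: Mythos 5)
Your proposal is correct and follows essentially the same route as the paper: one verifies directly that $\nabla g+\nabla\times\boldsymbol{q}$ satisfies the interior equation via the vector identities $\Delta=\nabla\nabla\cdot-\nabla\times\nabla\times$, and then checks $D\boldsymbol{p}=T^{*}\boldsymbol{p}$ on $\Gamma_h$ mode by mode, using the ODEs for $g_n,q_{jn}$ together with $\zeta_n(h)=Z_{jn}(h)=0$, the conditions \eqref{tbcgq1}--\eqref{tbcgq2}, and the constraint $\nabla\cdot\boldsymbol{q}=0$ to eliminate $q_{3n}(h)$. The only difference is presentational: the paper grinds out the $3\times3$ matrix identity explicitly, whereas you invoke the (correct) observation that the substitution $\beta_{jn}\mapsto-\overline{\beta_{jn}}$ in the forward derivation of $M_n$ produces exactly $M_n^{*}$, and you append a uniqueness argument that the paper leaves implicit.
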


\begin{proof}
Substituting $\boldsymbol{p}=\nabla g+\nabla\times\boldsymbol{q}$ into the
elastic wave equation, we obtain
\begin{eqnarray*}
&&\mu\Delta\left(\nabla
g+\nabla\times\boldsymbol{q}\right)+(\lambda+\mu)\nabla\nabla\cdot\left(\nabla
g+\nabla\times\boldsymbol{q}\right)
+\omega^2\left(\nabla g+\nabla\times\boldsymbol{q}\right)\\
&&=\nabla\left(\mu\Delta g+(\lambda+\mu)\Delta g+\omega^2
g\right)+\nabla\times\left(\mu\Delta
\boldsymbol{q}+\omega^2\boldsymbol{q}\right)\\
&&=(\lambda+2\mu)\nabla\left(\Delta g+\kappa_1^2
g\right)+\mu\nabla\times\left(-\nabla\times(\nabla\times \boldsymbol{q})
+\kappa_2^2\boldsymbol{q}\right)\\
&&=-\nabla\zeta-\nabla\times\boldsymbol{Z}=-\boldsymbol{\xi}.
\end{eqnarray*}
Since $g(\boldsymbol
x)=\sum\limits_{n\in\mathbb{Z}^2}g_n(x_3)e^{{\rm
i}\boldsymbol{\alpha}_n\cdot\boldsymbol{r}}$, we get from taking the second
order partial derivatives of $g$ that 
\begin{eqnarray*}
\partial_{x_1}^2 g(\boldsymbol
x)&=&-\sum\limits_{n\in\mathbb{Z}}\alpha_{1n}^2 g_n(x_3)e^{{\rm
i}\boldsymbol{\alpha}_n\cdot\boldsymbol{r}},\\
\partial_{x_2}^2 g(\boldsymbol
x)&=&-\sum\limits_{n\in\mathbb{Z}}\alpha_{2n}^2
g_n(x_3)e^{{\rm i}\boldsymbol{\alpha}_n\cdot\boldsymbol{r}},\\
\partial_{x_3}^2 g(\boldsymbol x)&=&\sum\limits_{n\in\mathbb{Z}}
g_n''(x_3)e^{{\rm i}\boldsymbol{\alpha}_n\cdot\boldsymbol{r}}.
\end{eqnarray*}
Substituting the above three expansions into $(\lambda+2\mu)\left(\Delta
g+\kappa_1^2 g\right)=-\zeta$ yields
\begin{eqnarray}
g_n''(x_3)-|\boldsymbol{\alpha}_n|^2 g_n(x_3)
+\kappa_1^2 g_n(x_3)=-\frac{1}{\lambda+2\mu}
\zeta_n(x_3) .\label{g3Z}
\end{eqnarray}
Similarly, we may verify that $q_{jn}$ satisfies the second ordinary
differential equation
\begin{equation}\label{q3Z}
q_{jn}''(x_3)-|\boldsymbol{\alpha}_n|^2 q_{jn}(x_3)
+\kappa_2^2 q_{jn}(x_3)=-\frac{1}{\mu} Z_{jn}(x_3).
\end{equation}

Letting $\boldsymbol{p}=\sum\limits_{n\in\mathbb{Z}^2}
\boldsymbol{p}_n(x_3) e^{{\rm i}\boldsymbol{\alpha}_n\cdot\boldsymbol{r}}$ and 
plugging it into $\boldsymbol{p}=\nabla g+\nabla\times\boldsymbol q$, we get
\begin{equation}\label{valueP}
\begin{bmatrix} p_{1n}(x_3)\\ 
p_{2n}(x_3)\\  p_{3n}(x_3)
\end{bmatrix}=\begin{bmatrix}
{\rm i}\alpha_{1n}g^{(\boldsymbol{n})}(x_3)+{\rm
i}\alpha_{2n}q_{3n}(x_3)-q_{2n}'(x_3)\\
{\rm i}\alpha_{2n}g_n(x_3)-{\rm
i}\alpha_{1n}q_{3n}(x_3)+q_{1n}'(x_3)\\
{\rm i}\alpha_{1n}q_{2n}(x_3)-{\rm
i}\alpha_{2n}q_{1n}(x_3)+g_n'(x_3)
\end{bmatrix}.
\end{equation}
Substituting the above expressions into the boundary operator \eqref{OperatorD}
gives
\begin{eqnarray*}
&& \mu\partial_{x_3}\boldsymbol{p}+(\lambda+\mu)(0,0,1)^\top
\nabla\cdot\boldsymbol{p}\\
&& =\sum\limits_{n\in\mathbb{Z}^2}e^{{\rm
i}\boldsymbol{\alpha}_n\cdot\boldsymbol{r}}\begin{bmatrix}
\mu p_{1n}' \\
\mu p_{2n}' \\
\mu p_{3n}'+(\lambda+\mu){\rm
i}\alpha_{1n}p_{1n}+(\lambda+\mu){\rm i}\alpha_{2n}p_{2n}
+(\lambda+\mu)p_{3n}'
\end{bmatrix}\\
&& =\sum\limits_{n\in\mathbb{Z}^2}e^{{\rm
i}\boldsymbol{\alpha}_n\cdot\boldsymbol{r}}\begin{bmatrix}
\mu \left({\rm i}\alpha_{1n}g_n'+{\rm
i}\alpha_{2n}q_{3n}'-q_{2n}''\right)\\
\mu \left({\rm i}\alpha_{2n}g_n'-{\rm
i}\alpha_{1n}q_{3n}'+q_{1n}''\right)\\
\left(\lambda+2\mu\right)\left({\rm
i}\alpha_{1n}q_{2n}'-{\rm
i}\alpha_{2n}q_{1n}'+g_n''\right)
+\left(\lambda+\mu\right)\left({\rm i}\alpha_{1n}
p_{1n}+{\rm i}\alpha_{2n} p_{2n}\right)
\end{bmatrix}\\
&&=\sum\limits_{n\in\mathbb{Z}^2}e^{{\rm
i}\boldsymbol{\alpha}_n\cdot\boldsymbol{r}}\begin{bmatrix}
{\rm i}\mu\alpha_{1n}g_n'+{\rm
i}\mu\alpha_{2n}q_{3n}'+Z_{2n}
+\mu\left(\kappa_2^2-|\boldsymbol{\alpha}_n|^2\right)q_{2n}\\
{\rm i}\mu\alpha_{2n}g_n'-{\rm
i}\mu\alpha_{1n}q_{3n}'-Z_{1n}
-\mu\left(\kappa_2^2-|\boldsymbol{\alpha}_n|^2\right)q_{1n}\\	
-\zeta_n-(\lambda+2\mu)\left(\kappa_1^2-|\boldsymbol{
\alpha}_n|^2\right)g_n+{\rm i}\mu\alpha_{1n}q_{2n}'-{\rm
i}\mu\alpha_{2n}q_{1n}'-(\lambda+\mu)
|\boldsymbol{\alpha}_n|^2 g_n
\end{bmatrix}.
\end{eqnarray*}
Substituting \eqref{BoundaryZ} and \eqref{tbcgq1}--\eqref{tbcgq2} into the
above equation and evaluating it at $x_3=h$, we get 
\begin{eqnarray*}
&& D\boldsymbol{p}=\mu\partial_{x_3}\boldsymbol{p}+(\lambda+\mu)(0,0,1)^\top
\nabla\cdot\boldsymbol{p}\\
&& =\sum\limits_{n\in\mathbb{Z}^2}e^{{\rm
i}\boldsymbol{\alpha}_n\cdot\boldsymbol{r}}\begin{bmatrix}
-\mu\alpha_{1n}\alpha_{2n} & 
\mu\left(\beta_{2n}^2-\alpha_{2n}^2\right) 
& \mu\alpha_{1n}\overline{\beta_{1n}}\\
-\mu\left(\beta_{2n}^2-\alpha_{2n}^2\right) &
\mu\alpha_{1n}\alpha_{2n} 
& \mu\alpha_{2n}\overline{\beta_{1n}}\\
-\mu\alpha_{2n}\overline{\beta_{2n}} &
\mu\alpha_{1n}\overline{\beta_{2n}} 
& -\mu\kappa_2^2+\mu|\boldsymbol{\alpha}_n|^2
\end{bmatrix}\begin{bmatrix}
q_{1n}(h)\\
q_{2n}(h)\\
g_n(h)
\end{bmatrix}.
\end{eqnarray*}

On the other hand, substituting \eqref{valueP} into \eqref{Mn} gives
\begin{eqnarray*}
&&T^{*}\boldsymbol{p}=\sum\limits_{n\in\mathbb{Z}^2}
M_n^* \boldsymbol{p}_n(h)e^{{\rm
i}\boldsymbol{\alpha}_n\cdot\boldsymbol{r}}\\
&&=\sum\limits_{n\in\mathbb{Z}^2} M_n^*\begin{bmatrix}
0 & {\rm i}\overline{\beta_{2n}} & {\rm
i}\alpha_{2n} & {\rm i}\alpha_{1n}\\
-{\rm i}\overline{\beta_{2n}} & 0 & -{\rm
i}\alpha_{1n} & {\rm i}\alpha_{2n}\\
-{\rm i}\alpha_{2n} & {\rm i}\alpha_{1n} & 0 & -{\rm
i}\overline{\beta_{1n}} 
\end{bmatrix}\begin{bmatrix}
q_{1n}(h)\\
q_{2n}(h)\\
q_{3n}(h)\\
g_n(h)
\end{bmatrix}=\sum\limits_{n\in\mathbb{Z}^2}
K_n\begin{bmatrix}
q_{1n}(h)\\
q_{2n}(h)\\
q_{3n}(h)\\
g_n(h)
\end{bmatrix},
\end{eqnarray*}
where
\begin{equation}\label{Kn}
K_n=-{\rm i}\mu\begin{bmatrix}
0 & {\rm i}\overline{\beta_{2n}^2} & 
{\rm i}\overline{\beta_{2n}}\alpha_{2n} &
{\rm i}\overline{\beta_{1n}}\alpha_{1n}\\
-{\rm i}\overline{\beta_{2n}^2} & 0 &
-{\rm i}\overline{\beta_{2n}}\alpha_{1n} &
{\rm i}\overline{\beta_{1n}}\alpha_{2n}\\
-{\rm i}\overline{\beta_{2n}}\alpha_{2n}
&{\rm i}\overline{\beta_{2n}}\alpha_{1n} &
0 & -{\rm i}\overline{\beta_{2n}^2}
\end{bmatrix}.
\end{equation}
It follows from $\nabla\cdot\boldsymbol{q}=0$ that
\begin{eqnarray*}
q_{3n}'(x_3)={\rm i}\alpha_{1n}q_{1n}(x_3)+{\rm
i}\alpha_{2n}q_{2n}(x_3).
\end{eqnarray*}
Taking the derivative of the above equation and combining the result with
\eqref{q3Z}, we get
\begin{equation}\label{q3atx3}
{\rm i}\alpha_{1n}q_{1n}'(x_3)+{\rm
i}\alpha_{2n}q_{2n}'(x_3)=-\frac{1}{\mu}Z_{3n}
(x_3)-\left(\kappa_2^2-|\boldsymbol{\alpha}_n|^2\right)q_{3n} (x_3).
\end{equation}
Evaluating $q_{3n}(x_3)$ at $x_3=h$, we have from
\eqref{tbcgq1}--\eqref{tbcgq2} that 
\begin{eqnarray*}
-\left(\kappa_2^2-|\boldsymbol{\alpha}_n|^2\right)q_{3n}(h)=
{\rm i}\alpha_{1n}(-{\rm i}\overline{\beta_{2n}})q_{1n}(h)
+{\rm i}\alpha_{2n}(-{\rm i}\overline{\beta_{2n}})q_{2n}(h),
\end{eqnarray*}
which gives
\begin{eqnarray}\label{q3h1}
q_{3n}(h)=-\frac{\alpha_{1n}\overline{\beta_{2n}}}{\beta_{2n}^2 }q_{1n}(h)
-\frac{\alpha_{2n}\overline{\beta_{2n}}}{\beta_{2n}^2}q_{2n}(h).
\end{eqnarray}
Substituting \eqref{q3h1} into \eqref{Kn}, we obtain 
\[
D\boldsymbol{p}=T^*\boldsymbol{p}\quad {\rm
on} ~ \Gamma_{h},
\]
which completes the proof.
\end{proof}

Consider the general two-point boundary value problem for the second order
ordinary differential equation
\begin{equation*}
\begin{cases}
u''(y)-|\beta|^2 u(y)=-c\xi,\quad y\in(\hat h, h),\\
u(\hat h)=u(\hat h),\quad u'(h)=-|\beta|u(h),
\end{cases}
\end{equation*}
which has a unique solution given by 
\begin{eqnarray*}
u(y)=\frac{1}{2\left|\beta\right|}\bigg[-c\int_{h}^{y} 
e^{|\beta|(y-s)}\xi(s)\,{\rm d}s+c\int_{\hat
h}^{y}e^{|\beta|(s-y)}\xi(s)\,{\rm d}s\\
-c\int_{\hat h}^{h}e^{|\beta|(2\hat h-y-s)}\xi(s)\,{\rm d}s
+2|\beta|e^{|\beta|(\hat h-y)}u(\hat h)\bigg].
\end{eqnarray*}

\begin{lemma}\label{lemmaP}
Let $\boldsymbol{p}=(p_1, p_2, p_3)^\top$ be the solution
of \eqref{DualProblem3}. Then for sufficiently large $|n|_{\max}$,
the following estimate holds:
\begin{equation*}
|p_{jn}(h)|\lesssim
|n|_{\max}e^{|\beta_{2n}|(\hat h-h)}\sum\limits_{j=1,2,3}
|p_{jn}(\hat h)|
+\frac{1}{|n|_{\max}}\sum\limits_{j=1,2,3}
\|\xi_{jn}\|_{L^{\infty}([\hat h, h])},
\end{equation*}	
where $p_{jn}$ are the Fourier coefficients of $p_j, j=1,2,3$.
\end{lemma}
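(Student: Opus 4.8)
The plan is to run the estimate through the Helmholtz decomposition $\boldsymbol{p}=\nabla g+\nabla\times\boldsymbol{q}$ furnished by Lemma \ref{lemma6}, working componentwise in the Fourier mode $n$ and using the explicit solution of the two-point boundary value problem recorded just above. One restricts throughout to $|n|_{\max}$ so large that $|\boldsymbol{\alpha}_n|>\max(\kappa_1,\kappa_2)$; then $\beta_{1n},\beta_{2n}$ are purely imaginary, the conditions \eqref{tbcgq1}--\eqref{tbcgq2} at $x_3=h$ become the real Robin conditions $g_n'(h)=-|\beta_{1n}|g_n(h)$ and $q_{jn}'(h)=-|\beta_{2n}|q_{jn}(h)$ for $j=1,2$, while \eqref{g3Z}, \eqref{q3Z} read $g_n''-|\beta_{1n}|^2 g_n=-(\lambda+2\mu)^{-1}\zeta_n$ and $q_{jn}''-|\beta_{2n}|^2 q_{jn}=-\mu^{-1}Z_{jn}$. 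Hence $g_n,q_{1n},q_{2n}$ are each given by the explicit formula with the value at $\hat h$ prescribed. Evaluated at $x_3=h$ that formula leaves the term $e^{|\beta|(\hat h-h)}u(\hat h)$ together with two source integrals; inserting the pointwise bounds $|\zeta_n(s)|,|Z_{jn}(s)|\lesssim|\boldsymbol{\alpha}_n|^{-1}e^{|\boldsymbol{\alpha}_n|(h-s)}\|\boldsymbol{\xi}_n\|_{\boldsymbol{L}^{\infty}([\hat h,h])}$ from Lemma \ref{lemmaZ}, and using that $|\boldsymbol{\alpha}_n|-|\beta_{jn}|=\kappa_j^2/(|\boldsymbol{\alpha}_n|+|\beta_{jn}|)\to 0$, so the exponents surviving in those integrals stay bounded on $[\hat h,h]$, each source integral is $O(|\boldsymbol{\alpha}_n|^{-2}\|\boldsymbol{\xi}_n\|_{\boldsymbol{L}^{\infty}([\hat h,h])})$. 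Since $|\beta_{1n}|>|\beta_{2n}|$ because $\kappa_1<\kappa_2$, this yields
\[
|g_n(h)|+|q_{1n}(h)|+|q_{2n}(h)|\lesssim e^{|\beta_{2n}|(\hat h-h)}\big(|g_n(\hat h)|+|q_{1n}(\hat h)|+|q_{2n}(\hat h)|\big)+|n|_{\max}^{-2}\|\boldsymbol{\xi}_n\|_{\boldsymbol{L}^{\infty}([\hat h,h])}.
\]

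Next I would reconstruct $\boldsymbol{p}_n(h)$ from \eqref{valueP}. The component $q_{3n}(h)$ is removed by \eqref{q3h1}, and since $|\alpha_{jn}|/|\beta_{2n}|\lesssim 1$ one has $|q_{3n}(h)|\lesssim|q_{1n}(h)|+|q_{2n}(h)|$; the derivatives $g_n'(h),q_{1n}'(h),q_{2n}'(h)$ are removed by the Robin conditions and are $O(|n|_{\max})$ multiples of the corresponding values. Substituting into \eqref{valueP} at $x_3=h$ gives $|p_{jn}(h)|\lesssim|n|_{\max}\big(|g_n(h)|+|q_{1n}(h)|+|q_{2n}(h)|\big)$ for $j=1,2,3$, so together with the previous display the claim reduces to showing that $|g_n(\hat h)|+|q_{1n}(\hat h)|+|q_{2n}(\hat h)|$ is controlled by $\sum_{j}|p_{jn}(\hat h)|$, up to a fixed power of $|n|_{\max}$ which will be absorbed by the exponentially small weight, together with source terms still to be handled.

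That last bound is obtained by inverting the Helmholtz decomposition at $\Gamma_{\hat h}$, which is the technical heart of the argument and is parallel to the two-dimensional treatment in \cite{LY-2019-periodic}. Writing \eqref{valueP} at $x_3=\hat h$, one substitutes $g_n'(\hat h)=-|\beta_{1n}|g_n(\hat h)+(\text{source})$ and $q_{jn}'(\hat h)=-|\beta_{2n}|q_{jn}(\hat h)+(\text{source})$ for $j=1,2$, again from the explicit formula, and expresses $q_{3n}(\hat h)$ through $q_{1n}(\hat h),q_{2n}(\hat h)$ plus a source by integrating $\nabla\cdot\boldsymbol{q}=0$ from $\hat h$ and invoking \eqref{q3h1}. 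This turns \eqref{valueP} at $\hat h$ into a $3\times3$ linear system $\mathcal{C}_n(g_n(\hat h),q_{1n}(\hat h),q_{2n}(\hat h))^{\top}=(p_{1n}(\hat h),p_{2n}(\hat h),p_{3n}(\hat h))^{\top}+(\text{source})$ with entries of size $O(|n|_{\max})$ and with source right-hand side, by Lemma \ref{lemmaZ}, of size $O(|\boldsymbol{\alpha}_n|^{-2}e^{|\boldsymbol{\alpha}_n|(h-\hat h)}\|\boldsymbol{\xi}_n\|_{\boldsymbol{L}^{\infty}([\hat h,h])})$. A direct determinant computation identifies $\det\mathcal{C}_n$, up to an explicit positive factor comparable to $|\boldsymbol{\alpha}_n|$, with $\chi_n$; since $\chi_n$ is bounded away from $0$ and $\infty$ by \eqref{chi}, the system is solvable for large $|n|_{\max}$ and one obtains the required control on $|g_n(\hat h)|+|q_{1n}(\hat h)|+|q_{2n}(\hat h)|$. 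Assembling the three displays then produces the $|n|_{\max}e^{|\beta_{2n}|(\hat h-h)}\sum_j|p_{jn}(\hat h)|$ term of the lemma; the direct source terms from the first two displays are $O(|n|_{\max}^{-1}\|\boldsymbol{\xi}_n\|_{\boldsymbol{L}^{\infty}([\hat h,h])})$, and the source carried through $\Gamma_{\hat h}$, after multiplication by $|n|_{\max}e^{|\beta_{2n}|(\hat h-h)}$, is also $O(|n|_{\max}^{-1}e^{(|\boldsymbol{\alpha}_n|-|\beta_{2n}|)(h-\hat h)}\|\boldsymbol{\xi}_n\|_{\boldsymbol{L}^{\infty}([\hat h,h])})=O(|n|_{\max}^{-1}\|\boldsymbol{\xi}_n\|_{\boldsymbol{L}^{\infty}([\hat h,h])})$, since $|\boldsymbol{\alpha}_n|-|\beta_{2n}|\to 0$ nearly cancels the growing exponential coming out of Lemma \ref{lemmaZ}. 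Finally $\|\boldsymbol{\xi}_n\|_{\boldsymbol{L}^{\infty}([\hat h,h])}\le\sum_{j=1,2,3}\|\xi_{jn}\|_{L^{\infty}([\hat h,h])}$ converts this into the stated form. The main obstacle is precisely this $\Gamma_{\hat h}$ inversion: extracting the cancellations that reduce $\det\mathcal{C}_n$ to $\chi_n$ times a quantity comparable to $|\boldsymbol{\alpha}_n|$, and bookkeeping the several source contributions so that, once weighted by the exponentially decaying factor, they are all subsumed in the $|n|_{\max}^{-1}\sum_j\|\xi_{jn}\|_{L^{\infty}([\hat h,h])}$ term.
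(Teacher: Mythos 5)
Your overall architecture coincides with the paper's: Helmholtz decomposition from Lemma \ref{lemma6}, explicit solution of the scalar two-point boundary value problems, evaluation at $x_3=h$ and $x_3=\hat h$, inversion of the decomposition map on $\Gamma_{\hat h}$ through a $3\times 3$ matrix whose determinant is controlled by $\chi_n$ (this is exactly the paper's $K_n$ and \eqref{Kinverse}, with \eqref{chi} giving the lower bound), and the source bookkeeping via Lemma \ref{lemmaZ} together with the cancellation $|\boldsymbol{\alpha}_n|-|\beta_{2n}|\sim |n|_{\max}^{-1}$; your treatment of the $w$ and $\hat w$ terms matches \eqref{wnestimate}--\eqref{Pw} essentially verbatim.

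There is, however, one genuine gap in the main transfer term. You compose three componentwise bounds: the map $(g_n,q_{1n},q_{2n})(h)\mapsto \boldsymbol p_n(h)$ of norm $O(|n|_{\max})$, the diagonal decay factor $e^{-|\beta_{2n}|(h-\hat h)}$, and the inversion at $\Gamma_{\hat h}$ of norm $O(|n|_{\max})$ (your $\mathcal C_n$ has entries $O(|n|_{\max})$ and determinant $\sim|n|_{\max}\chi_n$, so $\mathcal C_n^{-1}$ genuinely has entries of size $|n|_{\max}$, not $O(1)$). Multiplying absolute values therefore yields $|n|_{\max}^2\,e^{|\beta_{2n}|(\hat h-h)}\sum_j|p_{jn}(\hat h)|$, and your remark that the extra power ``will be absorbed by the exponentially small weight'' does not prove the stated inequality, since $|n|_{\max}^2 e^{-|\beta_{2n}|(h-\hat h)}\not\lesssim |n|_{\max} e^{-|\beta_{2n}|(h-\hat h)}$ uniformly in $n$. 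The missing idea is a cancellation that is invisible once you pass to absolute values: writing the composite transfer matrix as $K_n\,{\rm diag}\big(e^{-|\beta_{2n}|(h-\hat h)},e^{-|\beta_{2n}|(h-\hat h)},e^{-|\beta_{1n}|(h-\hat h)}\big)K_n^{-1}$, the deviation of the diagonal from a scalar multiple of the identity is governed by $e^{-|\beta_{1n}|(h-\hat h)}-e^{-|\beta_{2n}|(h-\hat h)}$, which by \eqref{MVT} is $O(|n|_{\max}^{-1})e^{-|\beta_{2n}|(h-\hat h)}$ because $|\beta_{1n}|-|\beta_{2n}|\sim|n|_{\max}^{-1}$; this gains exactly the factor $|n|_{\max}^{-1}$ you are missing. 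The paper implements this by observing in \eqref{ph1ph2} that the composite matrix is precisely the matrix $P_n$ of \eqref{decayU}, whose entries were already shown in the proof of Lemma \ref{lemma2} (estimate \eqref{aymptoticP}) to satisfy $|P^{(n)}_{ij}|\lesssim|n|_{\max}e^{-|\beta_{2n}|(h-\hat h)}$ via exactly these exponential-difference cancellations. With that replacement your argument closes; without it, it only proves the lemma with $|n|_{\max}^2$ in place of $|n|_{\max}$.
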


\begin{proof}
Let $c_1=1/\left(\lambda+2\mu\right) $ and $c_2=1/\mu$. We solve the two-point
boundary value problems of \eqref{g3Z}--\eqref{q3Z} and get the solutions
\begin{eqnarray}
g_n(x_3)=\frac{1}{2|\beta_{1n}|} \bigg[-c_1\int_{h}^{x_3}  
e^{|\beta_{1n}|(x_3-s)}\zeta_n(s)\,{\rm d}s +c_1\int_{\hat
h}^{x_3}e^{|\beta_{1n}|(s-x_3)}\zeta_n(s)\,{\rm d}s \notag\\
-c_1\int_{\hat h}^{h}e^{|\beta_{1n}|(2\hat h-x_3-s)}\zeta_n(s)\,{\rm d}s
+2|\beta_{1n}|e^{|\beta_{1n}|(\hat h-x_3)}g_n(\hat h)\bigg], 
\label{g}
\end{eqnarray}
\begin{eqnarray}
q_{1n}(x_3)=\frac{1}{2|\beta_{2n}|}
\bigg[-c_2\int_{h}^{x_3}e^{|\beta_{2n}|(x_3-s)}Z_{1n}(s)\,{\rm d}s
+c_2\int_{\hat h}^{x_3}e^{|\beta_{2n}|(s-x_3)}Z_{1n}(s)\,{\rm d}s \notag\\
-c_2\int_{\hat h}^{h}e^{|\beta_{2n}|(2\hat h-x_3-s)}Z_{1n}(s)\,{\rm d}s
+2|\beta_{2n}|e^{|\beta_{2n}|(\hat h-x_3)}q_{1n}(\hat h)\bigg],
\label{q1}
\end{eqnarray}
\begin{eqnarray}
q_{2n}(x_3)=\frac{1}{2|\beta_{2n}|}\bigg[-c_2\int_{h}^{x_3}  
e^{|\beta_{2n}|(x_3-s)}Z_{2n}(s)\,{\rm d}s+c_2\int_{\hat
h}^{x_3}e^{|\beta_{2n}|(s-x_3)}Z_{2n}(s)\,{\rm d}s \notag\\
-c_2\int_{\hat h}^{h}e^{|\beta_{2n}|(2\hat h-x_3-s)}Z_{2n}(s)\,{\rm d}s
+2|\beta_{2n}|e^{|\beta_{2n}|(\hat h-x_3)}q_{2n}(\hat h)\bigg].
\label{q2}
\end{eqnarray}
Taking the derivatives of \eqref{g}--\eqref{q2} and then evaluating at
$x_3=\hat h$ gives
\begin{eqnarray}
q_{1n}'(\hat h)&=&c_2\int_{\hat h}^{h}e^{|\beta_{2n}|(\hat h-s)}
Z_{1n}(s)\,{\rm d}s-|\beta_{2n}|q_{1n}(\hat h),\label{dq1h1}\\
q_{2n}'(\hat h)&=&c_2\int_{\hat h}^{h}e^{|\beta_{2n}|(\hat h-s)}
Z_{2n}(s)\,{\rm d}s-|\beta_{2n}|q_{2n}(\hat h),\label{dq2h1}\\	
g_n'(\hat h)&=&c_1\int_{\hat h}^{h}e^{|\beta_{1n}|(\hat h-s)}\zeta_n(s)\,{\rm
d}s-|\beta_{1n}|g_n(\hat h).
\label{dgh1}
\end{eqnarray}
Evaluating \eqref{q3atx3} at $x_3=\hat h$ and then using
\eqref{dq1h1}--\eqref{dq2h1}, we get
\begin{eqnarray}
&&q_{3n}(\hat h)=-\frac{{\rm i}\alpha_{1n}}{|\beta_{2n}|}q_{1n}(\hat h)
-\frac{{\rm i}\alpha_{2n}}{|\beta_{2n}|}q_{2n}(\hat h_2)
+\frac{1}{|\beta_{2n}|^2}\frac{1}{\mu}Z_{3n}(\hat h) \notag\\
&&\quad +\frac{{\rm i}\alpha_{1n}}{|\beta_{2n}|^2}
c_2\int_{\hat h}^{h}e^{|\beta_{2n}|(\hat h-s)}Z_{1n}(s)\,{\rm d}s
+\frac{{\rm i}\alpha_{2n}}{|\beta_{2n}|^2}c_2\int_{\hat
h}^{h}e^{|\beta_{2n}|(\hat h-s)}Z_{2n}(s)\,{ \rm d}s
\label{q3h2}
\end{eqnarray}

Plugging  \eqref{tbcgq1}--\eqref{tbcgq2} and \eqref{q3atx3} into
\eqref{valueP} yields 
\begin{equation}\label{pKqh1}
(p_{1n}(h), p_{2n}(h), p_{3n}(h))^\top
=\frac{1}{|\beta_{2n}|}K_n (q_{1n}(\hat h), q_{2n}(\hat h), g_n(\hat h))^\top,
\end{equation}
where
\begin{eqnarray*}
K_n=\begin{bmatrix}
\alpha_{1n}\alpha_{2n} & |\beta_{2n}|^2
+\alpha_{2n}^2 & {\rm i}\alpha_{1n}|\beta_{2n}|\\
-|\beta_{2n}|^2-\alpha_{1n}^2 & -\alpha_{1n}\alpha_{2n} 
& {\rm i}\alpha_{2n}|\beta_{2n}|\\
-{\rm i}\alpha_{2n}|\beta_{2n}| & {\rm i}\alpha_{1n}|\beta_{2n}| 
&-|\beta_{1n}| |\beta_{2n}|
\end{bmatrix}. 
\end{eqnarray*}
It follows from a straightforward calculation that the inverse of $K_n$ is
\begin{equation}\label{Kinverse}
K_n^{-1}=\frac{1}{|\beta_{2n}|^2 \chi_n\left(|\beta_{2n}|^2
+|\boldsymbol{\alpha}_n|^2\right)}\hat{K}_n,
\end{equation}
where the entries of the matrix $\hat K_n$ are 
\begin{eqnarray*}
&&
\hat{K}^{(n)}_{11}=\alpha_{1n}\alpha_{2n}|\beta_{2n}|\left(|\beta_{1n}|
+|\beta_{2n}|\right), \quad
\hat{K}^{(n)}_{13}={\rm i}\alpha_{2n}|\beta_{2n}|\left(|\beta_{2n}|^2
+|\boldsymbol{\alpha}_n|^2\right),\\
&& \hat{K}^{(n)}_{12}=-\alpha_{1n}^2|\beta_{2n}|^2
+|\beta_{1n}| |\beta_{2n}|^3+\alpha_{2n}^2 |\beta_{1n}| |\beta_{2n}|,\\
&&\hat{K}^{(n)}_{21}=\alpha_{2n}^2 |\beta_{2n}|^2
-|\beta_{1n}||\beta_{2n}|^3-\alpha_{1n}^2|\beta_{1n}| |\beta_{2n}|,\\
&& \hat{K}^{(n)}_{22}=-\alpha_{1n}\alpha_{2n}
|\beta_{2n}|\left(|\beta_{1n}| +|\beta_{2n}|\right) ,\quad
\hat{K}^{(n)}_{23}=-{\rm
i}\alpha_{1n}|\beta_{2n}|\left(|\beta_{2n}|^2
+|\boldsymbol{\alpha}_n|^2\right),\\
&&\hat{K}^{(n)}_{31}=-{\rm
i}\alpha_{1n}|\beta_{2n}|\left(|\beta_{2n}|^2
+|\boldsymbol{\alpha}_n|^2\right),\quad
\hat{K}^{(n)}_{32}=-{\rm i}\alpha_{2n}|\beta_{2n}|\left(|\beta_{2n}|^2
+|\boldsymbol{\alpha}_n|^2\right),\\
&&\hat{K}^{(n)}_{33}=|\beta_{2n}|^2\left(|\beta_{2n}|^2
+|\boldsymbol{\alpha}_n|^2\right).
\end{eqnarray*}

Evaluating  \eqref{g}--\eqref{q2} at $x_3=h$, we get
\begin{eqnarray}\label{pq3}
\begin{bmatrix}
q_{1n}(h)\\
q_{2n}(h)\\
g_n(h)
\end{bmatrix}=
\begin{bmatrix}
e^{|\beta_{2n}|(\hat h-h)} & 0 & 0\\
0 & e^{|\beta_{2n}|(\hat h-h)} & 0\\
0 & 0 & e^{|\beta_{1n}|(\hat h-h)}
\end{bmatrix}
\begin{bmatrix}
q_{1n}(\hat h)\\
q_{2n}(\hat h)\\
g_n(\hat h)
\end{bmatrix}
+\begin{bmatrix}
\hat{w}_{1n} \\ 
\hat{w}_{2n} \\
\hat{w}_{3n} 
\end{bmatrix},
\end{eqnarray}
where
\begin{eqnarray*}
&&\hat{w}_{1n}=\frac{c_2}{2|\beta_{2n}|}\left[
\int_{\hat h}^{h}e^{|\beta_{2n}|(s-h)}Z_{1n}(s)\,{\rm d}s
-\int_{\hat h}^{h}e^{|\beta_{2n}|(2\hat h-h-s)}Z_{1n}(s)\,{\rm d}s\right],\\
&&\hat{w}_{2n}=\frac{c_2}{2|\beta_{2n}|}\left[
\int_{\hat h}^{h}e^{|\beta_{2n}|(s-h)}Z_{2n}(s)\,{\rm d}s
-\int_{\hat h}^{h}e^{|\beta_{2n}|(2\hat h-h-s)}Z_{2n}(s)\,{\rm d}s\right],\\
&&\hat{w}_{3n}=\frac{c_1}{2|\beta_{1n}|}\left[
\int_{\hat h}^{h}e^{|\beta_{1n}|(s-h)}\zeta_n(s)\,{\rm d}s
-\int_{\hat h}^{h}e^{|\beta_{1n}|(2\hat h-h-s)}
\zeta_n(s)\,{\rm d}s\right].
\end{eqnarray*}
Similarly, we evaluate \eqref{valueP} at $x_3=\hat h$ and get 
\begin{eqnarray}\label{pq1}
\begin{bmatrix}
p_{1n}(\hat h)\\
p_{2n}(\hat h)\\
p_{3n}(\hat h) \end{bmatrix}
=\frac{1}{|\beta_{2n}|}K_n
\begin{bmatrix}
q_{1n}(\hat h)\\
q_{2n}(\hat h)\\
g_n(\hat h) \end{bmatrix}
+\begin{bmatrix}
w_{1n}\\
w_{2n}\\
w_{3n} 
\end{bmatrix},
\end{eqnarray}
where
\begin{eqnarray*}
w_{1n}
&=&-\frac{1}{|\beta_{2n}|^2}\bigg(c_2\alpha_{1n}
\alpha_{2n}\int_{\hat h}^{h}e^{|\beta_{2n}|(\hat h-s)}Z_{1n}(s)\,{\rm d}s
-{\rm i}\alpha_{2n}\frac{1}{\mu}Z_{3n}(\hat h)\\
&&+c_2\left(|\beta_{2n}|^2+\alpha_{2n}^2\right)\int_{\hat h}^{h}
e^{|\beta_{2n}|(\hat h-s)}Z_{2n}(s)\,{\rm d}s\bigg),\\
w_{2n} &=&\frac{1}{|\beta_{2n}|^2}\bigg(c_2\alpha_{1n}
\alpha_{2n}\int_{\hat h}^{h} e^{|\beta_{2n}|(\hat h-s)}Z_{2n}(s)\,{\rm d}s-{\rm
i}\alpha_{1n}\frac{1}{\mu}Z_{3n}(\hat h)\\
&&+c_2\left(|\beta_{2n}|^2+\alpha_{1n}^2\right)\int_{\hat h}^{h}
e^{|\beta_{2n}|(\hat h-s)}Z_{1n}(s)\,{\rm d}s\bigg),\\
w_{3n}&=&c_1\int_{\hat h}^{h}e^{|\beta_{1n}|(\hat h-s)}\zeta_n(s)\, {\rm d}s. 
\end{eqnarray*}
It follows from \eqref{Kinverse} that we have 
\begin{equation}\label{pq2}
\begin{bmatrix}
q_{1n}(\hat h) \\ 
q_{2n}(\hat h) \\
g_n(\hat h)
\end{bmatrix}=
|\beta_{2n}|K_n^{-1}
\begin{bmatrix}
p_{1n}(\hat h) \\ 
p_{2n}(\hat h) \\ 
p_{3n}(\hat h)\end{bmatrix}
-|\beta_{2n}|K_n^{-1}
\begin{bmatrix}
w_{1n} \\ 
w_{2n} \\ 
w_{3n}
\end{bmatrix}.
\end{equation}
Substituting \eqref{pq3} into \eqref{pKqh1} leads to 
\begin{eqnarray*}
&&\begin{bmatrix}
p_{1n}(h) \\ 
p_{2n}(h) \\
p_{3n}(h)
\end{bmatrix}=
\frac{1}{|\beta_{2n}|}K_n
\begin{bmatrix}
q_{1n}(h) \\ 
q_{2n}(h) \\ 
g_n(h)
\end{bmatrix}\\
&&=\frac{1}{|\beta_{2n}|}
K_n\begin{bmatrix}
e^{|\beta_{2n}|(\hat h-h)} & 0 & 0\\
0 & e^{|\beta_{2n}|(\hat h-h)} & 0\\
0 & 0 & e^{|\beta_{1n}|(\hat h-h)}
\end{bmatrix}\begin{bmatrix}
q_{1n}(\hat h) \\
q_{2n}(\hat h) \\ 
g_n(\hat h)
\end{bmatrix}
+\frac{1}{|\beta_{2n}|}K_n
\begin{bmatrix}
\hat{w}_{1n} \\ 
\hat{w}_{2n} \\ 
\hat{w}_{3n}
\end{bmatrix}.
\end{eqnarray*}
Plugging \eqref{pq2} into the above equation gives 
\begin{equation}\label{ph1ph2}
\begin{bmatrix}
p_{1n}(h) \\ 
p_{2n}(h) \\
p_{3n}(h)
\end{bmatrix}=
P_n
\begin{bmatrix}
p_{1n}(\hat h) \\
p_{2n}(\hat h) \\ 
p_{3n}(\hat h)
\end{bmatrix}
-P_n
\begin{bmatrix}
w_{1n} \\
w_{2n} \\
w_{3n} 
\end{bmatrix}+\frac{1}{|\beta_{2n}|}
K_n\begin{bmatrix}
\hat{w}_{1n} \\
\hat{w}_{2n} \\ 
\hat{w}_{3n}
\end{bmatrix},
\end{equation}
where the matrix $P_n$ is given in \eqref{decayU}. 

Following the same proof as that for \cite[Lemma 5.8]{LY-2019-periodic}, we
may show that 
\begin{equation}\label{wnestimate}
|\hat{w}_{jn}|\lesssim
\frac{1}{|n|_{\max}^2}\sum\limits_{j=1,2,3} 
\|\xi_{jn}\|_{L^{\infty}([\hat h, h])},
\quad |w_{jn}|\lesssim
\frac{1}{|n|_{\max}^2}\sum\limits_{j=1,2,3} 
\|\xi_{jn}\|_{L^{\infty}([\hat h, h])}. 
\end{equation}
By \eqref{aymptoticP} and \eqref{wnestimate}, we have
\begin{equation}\label{Pw}
\bigg|\sum\limits_{j=1,2,3}P^{(n)}_{ij}
w_{jn}\bigg|\lesssim \frac{1}{|n|_{\max}}
e^{\left(|\boldsymbol{\alpha}_n|
-|\beta_{2n}|\right)(h-\hat h)}\sum\limits_{j=1,2,3}
\|\xi_{jn}\|_{L^{\infty}([\hat h, h])}.	
\end{equation}
For sufficiently large $|n|_{\max}$, it is easy to get 
\begin{eqnarray*}
|\boldsymbol{\alpha}_n|-|\beta_{2n}|=|\boldsymbol{\alpha}_n|
-(|\boldsymbol{\alpha}_n|^2-\kappa_2^2)^{1/2}=\frac{\kappa_2^2}{
|\boldsymbol{\alpha}_n|+(|\boldsymbol{\alpha}_n|^2
-\kappa_2^2)^{1/2}}\sim\frac{1}{|n|_{\max}}.
\end{eqnarray*}
Plugging the above estimate into \eqref{Pw} gives 
\begin{equation}\label{Pwestimate}
\bigg|\sum\limits_{j=1,2,3}P^{(n)}_{ij}
w_{jn}\bigg|\lesssim\frac{1}{|n|_{\max}}\sum\limits_{j=1,2,3}
\|\xi_{jn}\|_{L^{\infty}([\hat h, h])}.
\end{equation}
It is also easy to check
\[
\left|\frac{1}{|\beta_{2n}|}K_n
\right|\sim O(|n|_{\max}).
\]
By \eqref{wnestimate}, we have 
\begin{equation}\label{Khatw}
\frac{1}{|\beta_{2n}|}\bigg|\sum\limits_{j=1,2,3}K^{(n)}_{ij}
\hat{w}_{jn}\bigg|\lesssim\frac{1}{|n|_{\max}}
\sum\limits_{j=1,2,3} \|\xi_{jn}\|_{L^{\infty}([\hat h, h])},
\end{equation}
which completes the proof after substituting \eqref{aymptoticP} and 
\eqref{Pwestimate}--\eqref{Khatw} into \eqref{ph1ph2}.
\end{proof}

Using Lemma \ref{lemmaP} and the same arguments as those
in \cite{LY-2019-periodic}, we may show that 
\begin{equation}\label{xiE2}
\bigg|\int_{\Gamma_{h}}(T-T_N)\boldsymbol{\xi}
\cdot\overline{\boldsymbol{p}}\,{\rm d}s\bigg|
\lesssim \frac{1}{N}\|\boldsymbol{\xi}\|^2_{\boldsymbol{H}^1(\Omega)}.
\end{equation}
The details are omitted for brevity. 

Now we are ready to show the proof of Theorem \ref{mainthm}.

\begin{proof}
By \eqref{Error}, Lemmas \ref{lemma3} and \ref{lemma5}, we obtain 
\begin{eqnarray*}
\vvvert \boldsymbol{\xi}\vvvert^2_{\boldsymbol{H}^{1}(\Omega)}\leq
C_1\left[\bigg(\sum\limits_{K\in
\mathcal M_h}\eta_{K}^2\bigg)^{1/2}+\max_{|n|_{\min}>N}
\Big(|n|_{\max}e^{-|\beta_{2n}|(h-\hat h)}
\Big)\|\boldsymbol{u}^{\rm inc}\|_{\boldsymbol{H}^{1}(\Omega)}
\right]\|\boldsymbol{\xi}\|_{\boldsymbol{H}^{1}(\Omega)}\\
+\left(C_2+C(\delta)\right)\|\boldsymbol{\xi}\|^2_{\boldsymbol{L}
^2(\Omega)}+\delta\|\boldsymbol{\xi}\|^2_{\boldsymbol{H}^{1}(\Omega)},
\end{eqnarray*}
where $C_1, C_2, C(\delta)$ are positive constants. Choosing a small enough
$\delta$ such that $\delta/\min(\mu, \omega^2)<1/2$ gives 
\begin{eqnarray}
\vvvert \boldsymbol{\xi}\vvvert_{\boldsymbol{H}^1(\Omega)}^2\leq 2C_1
\left[\bigg(\sum\limits_{K\in
\mathcal M_h}\eta_{K}^2\bigg)^{1/2}+\max_{|n|_{\min}>N}
\Big(|n|_{\rm max}e^{-|\beta_{2n}|(h-\hat h)}
\Big)\|\boldsymbol{u}^{\rm inc}\|_{\boldsymbol{H}^{1}(\Omega)}
\right]\|\boldsymbol{\xi}\|_{\boldsymbol{H}^{1}(\Omega)} \notag\\
+2(C_2+C(\delta))\|\boldsymbol{\xi}\|^2_{\boldsymbol{L}^2(\Omega)}.
\label{xiH1}
\end{eqnarray}
Substituting  \eqref{xiE2} into \eqref{xiL2} and using Lemma \ref{lemma3}, we
have 
\begin{eqnarray}\label{xiL22}
\|\boldsymbol{\xi}\|^2_{\boldsymbol{L}^2(\Omega)}\lesssim \left[
\bigg(\sum\limits_{K\in \mathcal M_h}\eta_{K}^2\bigg)^{1/2}+\max_{|n|_{\min}>N}
\Big(|n|_{\max}e^{-|\beta_{2n}|(h-\hat h)}
\Big)\|\boldsymbol{u}^{\rm inc}\|_{\boldsymbol{H}^{1}(\Omega)}
\right]\|\boldsymbol{\xi}\|_{\boldsymbol{H}^{1}(\Omega)}\notag\\
+\frac{1}{N}\|\boldsymbol{\xi}\|^2_{\boldsymbol{H}^1(\Omega)}.
\end{eqnarray}
The proof is completed after substituting \eqref{xiL22} into \eqref{xiH1} and
taking $N$ be a sufficiently large number. 
\end{proof}

\section{Numerical experiments}\label{section: NE}

In this section, we introduce the algorithmic implementation of the adaptive
finite element DtN method and present two numerical examples to demonstrate the
effectiveness of the proposed method. 

\subsection{Adaptive algorithm}

It is shown in Theorem \ref{mainthm} that the a posteriori error consists of two
parts: the finite element discretization error $\epsilon_h$ and the DtN operator
truncation error $\epsilon_N$, where 
\begin{eqnarray}\label{epsilonN}
\epsilon_h = \left(\sum\limits_{K\in\mathcal M_h} \eta^2_{K}\right)^{1/2},\quad 
\epsilon_N =\max\limits_{|n|_{\min}>N}
\left(|n|_{\rm max}e^{-|\beta_{2n}|(h-\hat h)}\right)\|\boldsymbol{u}^{\rm
inc}\|_{\boldsymbol{H}^{1}(\Omega)}. 
\end{eqnarray}
In the implementation, we choose the parameters $h, \hat h$ and $N$ based on
\eqref{epsilonN} to make sure that the DtN operator truncation error is smaller
than the finite element discretization error. 
In the following numerical experiments, $\hat h$ is chosen such that
$\hat h=\max_{\boldsymbol{r}\in\mathbb{R}^2}f(\boldsymbol{r})$ and $N$
is the smallest positive integer that makes $\epsilon_N\leq 10^{-8}$. 
The adaptive finite element DtN algorithm is shown in Table 1.

\begin{table}\label{Table1}	
\caption{The adaptive finite element DtN method.}
\vspace{1ex}
\hrule \hrule
\vspace{0.8ex} 
\begin{enumerate}		
\item Given the tolerance $\epsilon>0$ and the parameter $\tau\in(0,1)$.
\item Fix the computational domain $\Omega$ by choosing $h$.
\item Choose $\hat h$ and $N$ such that $\epsilon_N\leq 10^{-8}$.
\item Construct an initial triangulation $\mathcal M_h$ over $\Omega$ and compute
error estimators.
\item While $\epsilon_h>\epsilon$ do
\item \qquad refine mesh $\mathcal M_h$ according to the following strategy: \\
\[\text{if } \eta_{\hat{K}}>\tau \max\limits_{K\in\mathcal M_h}
\eta_{K}, \text{ refine the element } \hat{K}\in\mathcal M_h, \]
\item \qquad denote refined mesh still by $\mathcal M_h$, solve the discrete
problem \eqref{variation2} on the new mesh $\mathcal M_h$,
\item \qquad compute the corresponding error estimators.
\item End while.
\end{enumerate}
\vspace{0.8ex}
\hrule\hrule
\vspace{0.8ex} 
\end{table}

\subsection{Numerical examples}

In this section, we present two examples (cf. \cite{JLLZ-cms18}) to demonstrate
the numerical performance of the DtN method. The first-order linear element is
used for solving the problem. Our implementation is based on parallel
hierarchical grid (PHG) \cite{phg}, which is a toolbox for developing parallel
adaptive finite element programs on unstructured tetrahedral meshes. The linear
system resulted from the finite element discretization is solved by the
Supernodal LU (SuperLU) direct solver, which is a general purpose library for
the direct solution of large, sparse, nonsymmetric systems of linear equations.

{\em Example 1}. Consider a simple biperiodic structure, a plane surface,
where the exact solution is available. We assume that a plane compressional
plane wave $\boldsymbol u^{\rm inc}=\boldsymbol q e^{{\rm
i}(\boldsymbol\alpha\cdot \boldsymbol r-\beta x_3)}$ is incident on the
plane surface $x_3=0$, where $\boldsymbol\alpha=(\alpha_1, \alpha_2)^\top,
\alpha_1=\kappa_1\sin\theta_1\cos\theta_2,
\alpha_2=\kappa_1\sin\theta_1\sin\theta_2, \beta=\kappa_1\cos\theta_1,
\boldsymbol q=(q_1, q_2, q_3)^\top, q_1=\sin\theta_1\cos\theta_2,
q_2=\sin\theta_1\sin\theta_2, q_3=-\cos\theta_1, \theta_1\in [0, \pi/2),
\theta_2\in[0, 2\pi]$ are incident angles. It follows from the elastic wave
equation and the Helmholtz decomposition that we may obtain the exact solution
for the scattered field 
\[
\boldsymbol u(\boldsymbol x)={\rm i}
\begin{bmatrix}
\alpha_1\\
\alpha_2\\
\beta
\end{bmatrix}
 a e^{{\rm i}(\boldsymbol\alpha\cdot\boldsymbol r+\beta x_3)}+{\rm i}
\begin{bmatrix}
\alpha_2 b_3 -
\beta_{2 0}b_2\\
\beta_{2 0}b_1-\alpha_1 b_3\\
\alpha_1 b_2-\alpha_2 b_1
\end{bmatrix}
 e^{{\rm i}(\boldsymbol\alpha\cdot\boldsymbol r+\beta_{20}x_3)},
\]
where $(a, b_1, b_2, b_3)^\top$ is the solution of the following linear system:
\[
 {\rm i}
 \begin{bmatrix}
 \alpha_{1} &                        0 & -\beta_{20} &
\alpha_{2}\\
 \alpha_{2} & \beta_{20} &         0 & -\alpha_{1}\\
 \beta &    -\alpha_{2} &  \alpha_{1} &
0\\
 0 &          \alpha_{1} &  \alpha_{2} & \beta_{20}
 \end{bmatrix}
 \begin{bmatrix}
  a\\
  b_1\\
  b_2\\
  b_3
 \end{bmatrix}=-
 \begin{bmatrix}
  q_1\\
  q_2\\
  q_3\\
  0
 \end{bmatrix}.
 \]
 Solving the above equations via Cramer's rule gives
  \begin{eqnarray*}
a&=&\frac{\rm i}{\chi}\big(\alpha_{1}   q_1
+ \alpha_{2}  q_2+ \beta_{20} q_3\big),\\
b_{1}  &=&\frac{\rm i}{\chi}\big(
 \alpha_{1}\alpha_{2}(\beta-\beta_{20}) q_1 /\kappa^2_{2}
 +(\alpha_{1}^2\beta_{20}+\alpha_{2}^2\beta +
\beta \beta_{20}^2) q_2/\kappa^2_{2}
 -\alpha_{2} q_3\big), \\
b_{2}  &=&\frac{\rm i}{\chi}\big(
 -(\alpha_{1}^2\beta+\alpha_{2}^2\beta_{20} +
\beta \beta_{20}^2) q_1/\kappa^2_{2}
  -\alpha_{1}\alpha_{2}(\beta-\beta_{20})q_2/\kappa^2_{2} +\alpha_{1}
q_3\big), \\
b_{3}  &=&\frac{\rm i}{\kappa^2_{2} }\big(
 \alpha_{2} q_1 -\alpha_{1} q_2\big),
 \end{eqnarray*}
 where
 \[
 \chi=|\boldsymbol{\alpha}|^2+\beta\beta_{20}.
 \]
In our experiments, the parameters are chosen as $\lambda=1,
\mu=1, \theta_1=\theta_2=\pi/6, \omega=2\pi$. The computational domain
$\Omega=(0,1)\times(0,1)\times(0,0.3)$. The mesh and surface plots of the
amplitude of the scattered field $\boldsymbol{v}_h$ are shown in Figure
\ref{ex1:mesh}. The mesh has 228400 tetrahedrons and the total number of degrees
of freedom (DoFs) on the mesh is 253200. The grating efficiencies are displayed
in Figure \ref{ex1:eff}, which verifies the conservation of the energy in
\cite[Theorem 2.1]{JLLZ-cms18}. Figure
\ref{ex1:err} shows the curves of $\log \|\nabla(\boldsymbol{u}
-\boldsymbol{u}_k)\|_{0, \Omega}$ versus $\log N_k$  
for both the a priori and the a posteriori error estimates,
where $N_k$ is the total number of DoFs of the mesh. It
indicates that the meshes and the associated numerical complexity are
quasi-optimal, i.e.,  $\log \|\nabla(\boldsymbol{u}
-\boldsymbol{u}_k)\|_{0, \Omega}=O(N^{-1/3}_k)$ is valid
asymptotically.

{\em Example 2}. This example concerns the scattering of a time-harmonic
compressional plane wave $\boldsymbol{u}^{\rm inc}$ on a flat grating surface
with two square bumps, as seen in Figure \ref{ex2:geo}. The parameters are
chosen as $\lambda=1, \mu=1, \theta_1=\theta_2=\pi/6$, $\omega=2\pi$. The
computational domain is $\Omega=(0,1)\times(0,1)\times(0,0.6)$.
Since there is no exact solution for this example, we plot in Figure
\ref{ex2:est} the curves of $\log \|\nabla(\boldsymbol{u}
-\boldsymbol{u}_k)\|_{0, \Omega}$ versus $\log N_k$ for the a posteriori error
estimates, where $N_k$  is the total number of DoFs of the mesh. Again, the
result shows that the meshes and the associated numerical complexity are
quasi-optimal for the proposed method. We also plot the grating efficiencies
against the DoFs in Figure \ref{ex2:eff} to verify the conservation of the
energy. Figures \ref{ex2:mesh} and \ref{ex2:mesh2} show the meshes and the
amplitude of the associated solution for the scattered field $\boldsymbol
u_h $ when the mesh has 346734 tetrahedrons.

\begin{figure}
\centering
\includegraphics[width=0.4\textwidth]{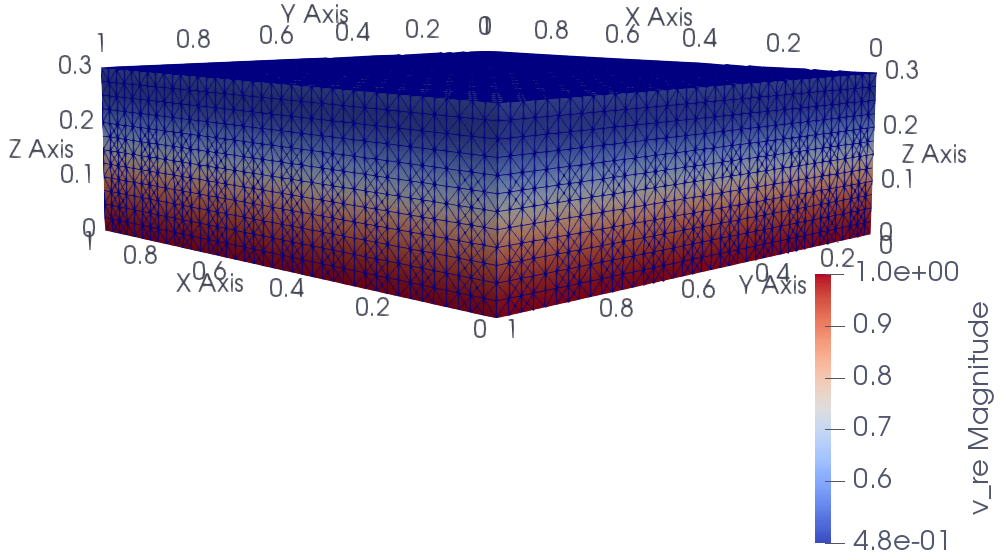}
\hskip 1.5cm
\includegraphics[width=0.4\textwidth]{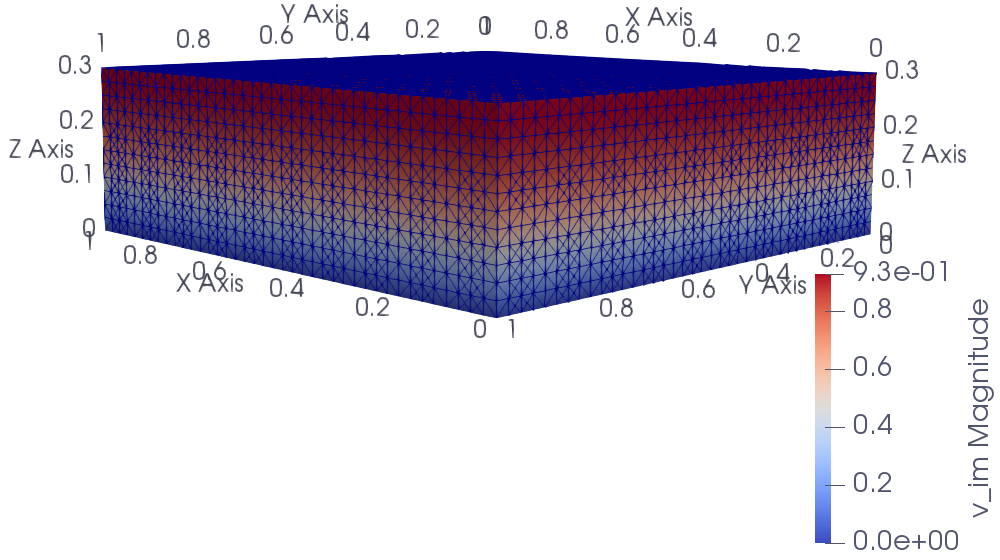}
\caption{Example 1. The mesh and surface plots of the amplitude of the
associated solution for the scattered field $\boldsymbol u_h $. (left) The
amplitude of the real part of the solution
$|\Re\boldsymbol u_h |$; (right) The amplitude of the imaginary
part of the solution $|\Im\boldsymbol u_h |$.}\label{ex1:mesh}
\end{figure}

\begin{figure}
\centering
\includegraphics[width=0.4\textwidth]{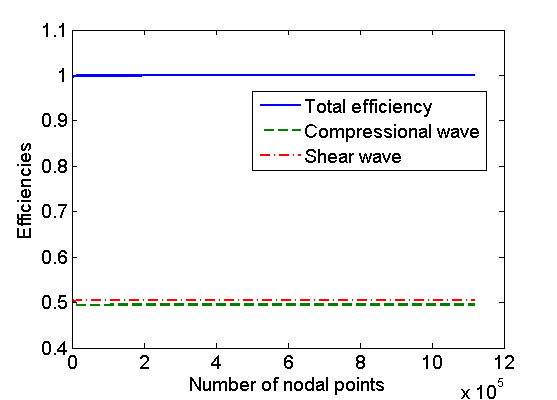}
\hskip 1.5cm
\includegraphics[width=0.4\textwidth]{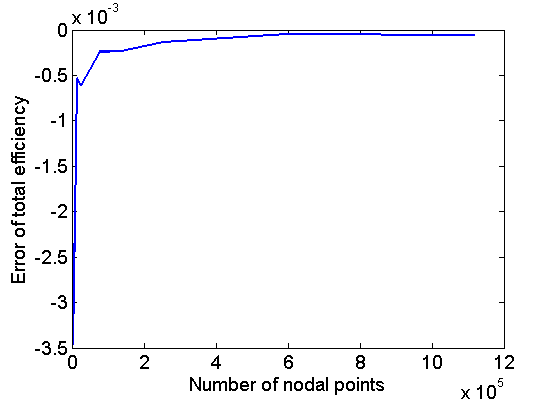}
\caption{Example 1. (left) Grating efficiencies; (right) Error of the grating
efficiency.}\label{ex1:eff}
\end{figure}

\begin{figure}
\centering
\includegraphics[width=0.42\textwidth]{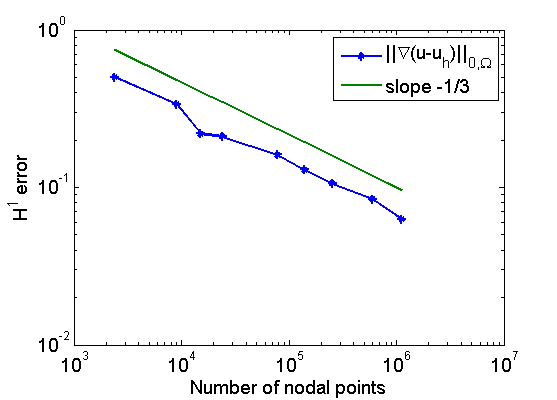}
\hskip 1.5cm
\includegraphics[width=0.42\textwidth]{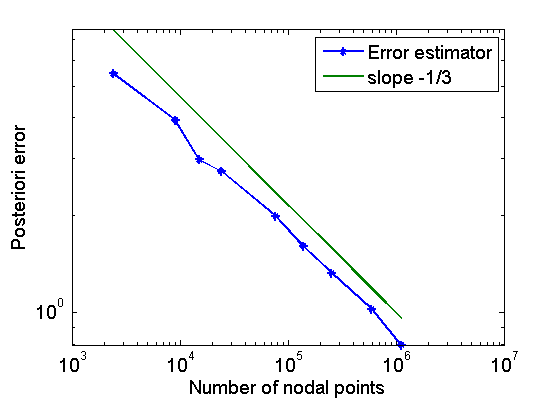}
\caption{Example 1. (left) Quasi-optimality of the a priori error estimates;
(right) Quasi-optimality of the a posteriori error estimates.}\label{ex1:err}
\end{figure}

\begin{figure}
\center
\includegraphics[width=0.4\textwidth]{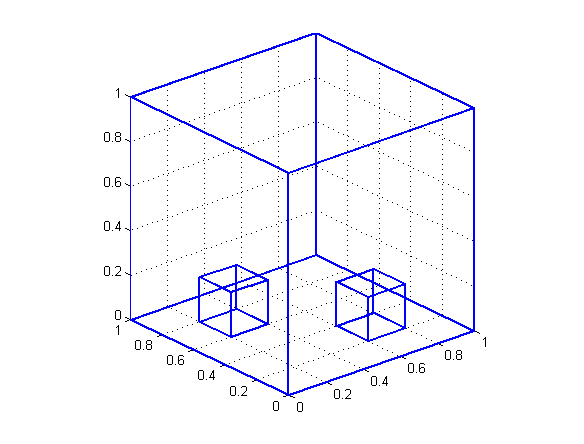}
\caption{Example 2. Problem geometry of the domain.}
\label{ex2:geo}
\end{figure}

\begin{figure}
\centering
\includegraphics[width=0.4\textwidth]{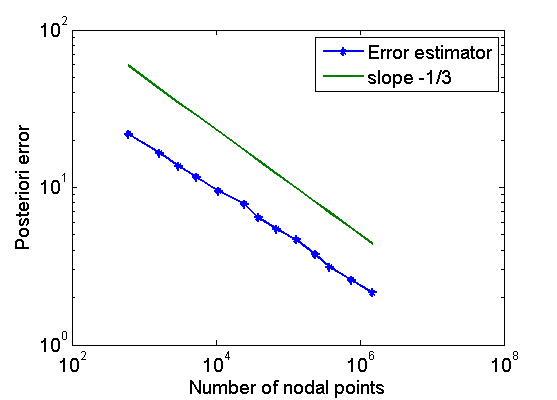}
\caption{Example 2: Quasi-optimality of the a posteriori error
estimates.}\label{ex2:est}
\end{figure}

\begin{figure}
\centering
\includegraphics[width=0.4\textwidth]{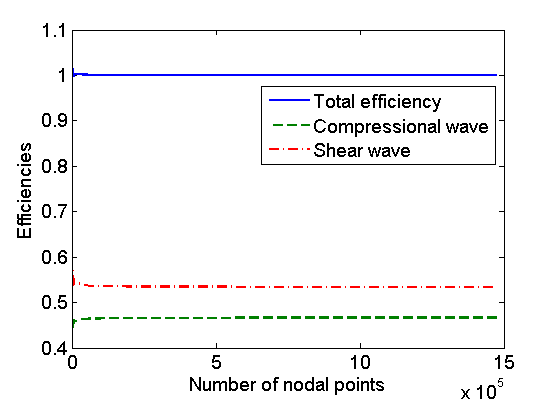}
\hskip 1.5cm
\includegraphics[width=0.4\textwidth]{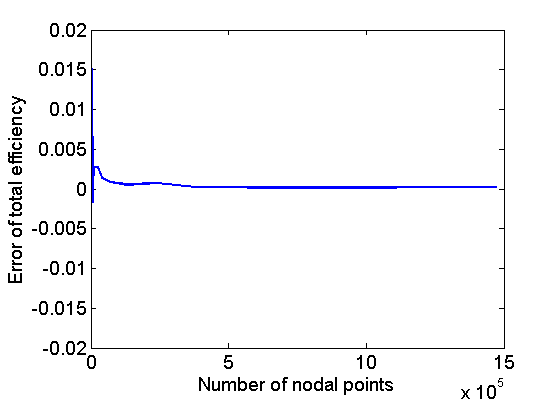}
\caption{Example 2:  Grating efficiencies; (right) Error of the grating
efficiency.}\label{ex2:eff}
\end{figure}

\begin{figure}
\centering
\includegraphics[width=0.38\textwidth]{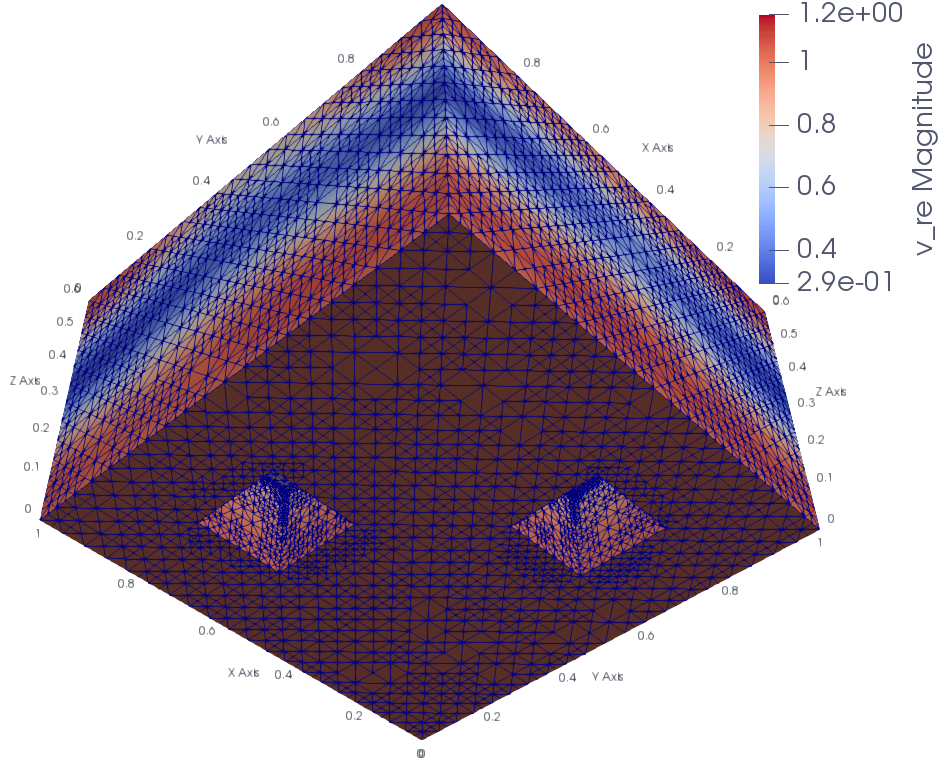}
\hskip 1.5cm
\includegraphics[width=0.38\textwidth]{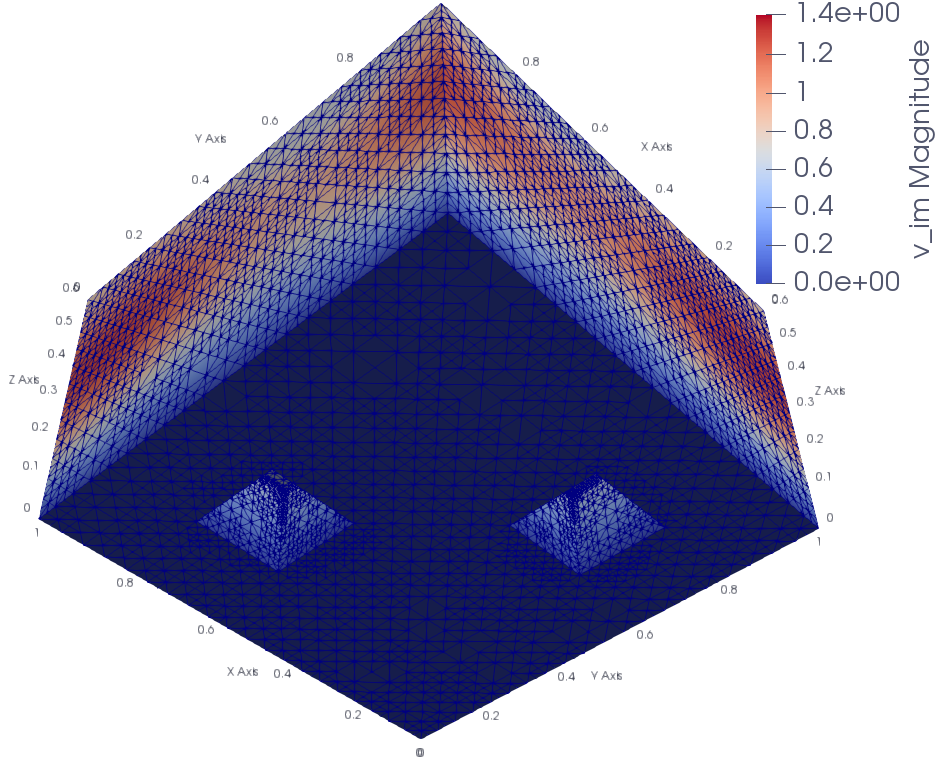}
\caption{Example 2. The mesh and surface plots of the amplitude of the
associated solution for the scattered field $\boldsymbol u_h $:
(left) the amplitude of the real part of the solution $|\Re\boldsymbol u_h |$;
(right) the amplitude of the imaginary part of the solution $|\Im\boldsymbol
u_h |$.}\label{ex2:mesh}
\end{figure}

\begin{figure}
\centering
\includegraphics[width=0.38\textwidth]{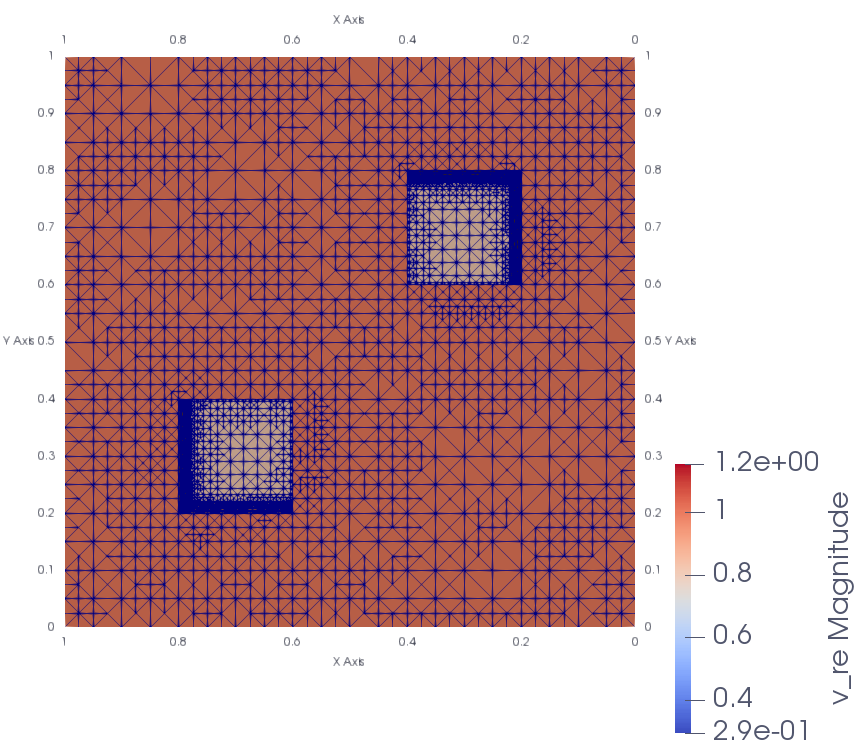}
\hskip 1.5cm
\includegraphics[width=0.38\textwidth]{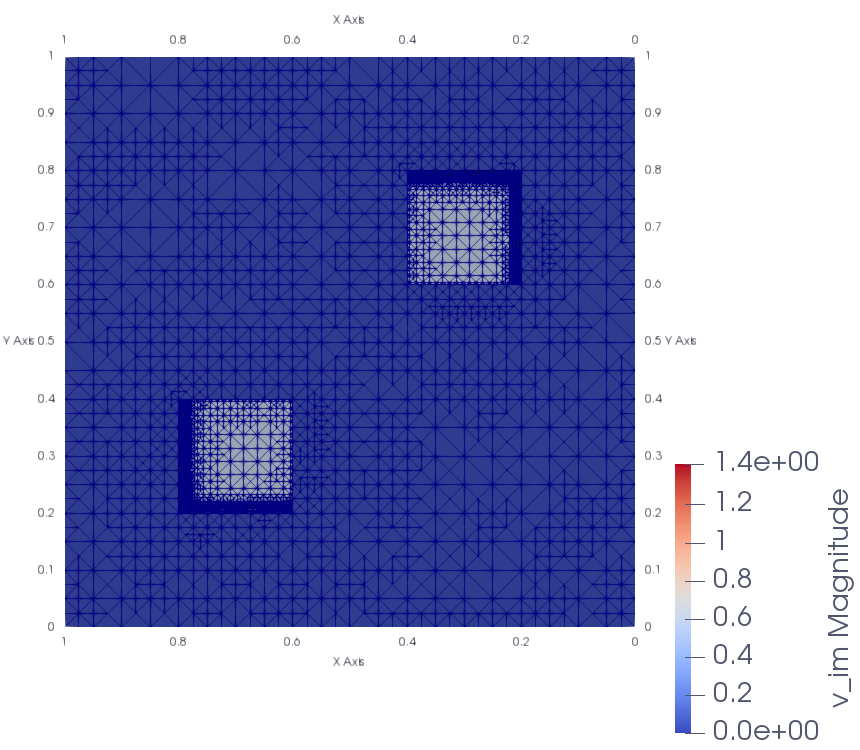}
\caption{Example 2. The mesh and surface plots of the amplitude of the
associated solution for the scattered field $\boldsymbol v_h $ from a view of
the $x_3$-axis: (left) the amplitude of the real part of the solution
$|\Re\boldsymbol v_h |$; (right) the amplitude of the imaginary part of the
solution $|\Im\boldsymbol v_h |$.}\label{ex2:mesh2}
\end{figure}

\section{conclusion}\label{section: conclusion}

In this paper, we have presented an adaptive finite element DtN method for the
elastic scattering problem in bi-periodic structures. Based on the Helmholtz
decomposition, a new duality argument is developed to obtain the a posteriori
error estimate. It takes account of both the finite element discretization error
and the DtN operator truncation error, which is shown to decay exponentially
with respect to the truncation parameter. Numerical results show that the
proposed method is effective and accurate. This work provides a viable
alternative to the adaptive finite element PML method for solving the elastic
surface scattering problem. It also enriches the range of choices available for
solving elastic wave propagation problems imposed in unbounded domains. Along
the line of this work, a possible continuation is to extend our analysis to
the adaptive finite element DtN method for solving the three-dimensional
obstacle scattering problem and acoustic-elastic interactive problem. The
progress will be reported elsewhere on these problems in the future.

\end{document}